\newtheorem{bigthm}{Theorem}
\newtheorem{bigthmp}{Theorem}
\newtheorem{theorem}{Theorem}[section]
\newtheorem{lemma}[theorem]{Lemma}
\newtheorem{proposition}[theorem]{Proposition}
\newtheorem*{theorem*}{Theorem}
\newtheorem{corollary}[theorem]{Corollary}
\newtheorem{conjecture}{Conjecture}
\theoremstyle{definition}
\newtheorem{definition}[theorem]{Definition}
\newtheorem{example}{Example}
\theoremstyle{remark}
\newtheorem*{remark}{Remark}
\numberwithin{equation}{section}
\newcommand{\norm}[3]{\ensuremath{\left\Vert#1\right\Vert_{#2}^{#3}}}
\newcommand{\indicator}{{\mathord{\mathbbmss 1}}}
\newcommand{\Z}{\mathbb Z}
\newcommand{\N}{\mathbb N}
\newcommand{\R}{\mathbb R}
\newcommand{\T}{\mathbb T}
\newcommand{\e}{\varepsilon}
\newcommand{\al}{\alpha}
\newcommand{\be}{\beta}
\newcommand{\ga}{\gamma}
\begin{document}

\title{Powers of sequences and convergence of ergodic averages}

 \author{N. Frantzikinakis}
\address[Nikos  Frantzikinakis]{Department of Mathematics\\
 University of Memphis\\
  Memphis, TN \\ 38152 \\ USA } \email{frantzikinakis@gmail.com}

\author{M. Johnson}
\address[Michael Johnson]{Department of Mathematics \& Statistics\\
 Swarthmore College\\
  Swarthmore, PA \\ 19081 \\ USA } \email{mjohnso3@swarthmore.edu}

\author{E. Lesigne}
\address[Emmanuel Lesigne]{Laboratoire de Math\'ematiques et Physique Th\'eorique (UMR CNRS 6083)\\
Universit\'e Fran\c{c}ois Rabelais Tours \\
F\'ed\'eration de Recherche Denis Poisson\\
Parc de Grandmont \\
37200 Tours\\ France} \email{emmanuel.lesigne@lmpt.univ-tours.fr}

\author{M. Wierdl}
\address[M\'at\'e Wierdl]{Department of Mathematics\\
  University of Memphis\\
  Memphis, TN \\ 38152 \\ USA } \email{wierdlmate@gmail.com}

\begin{abstract}
A sequence $(s_n)$ of integers is good for the mean ergodic theorem if for each invertible measure preserving system $(X,\mathcal{B},\mu,T)$ and any bounded measurable function $f$, the averages   $ \frac1N \sum_{n=1}^N f(T^{s_n}x)$ converge in the $L^2(\mu)$ norm.
We construct a sequence $(s_n)$ that is good for the mean ergodic theorem, but the sequence  $(s_n^2)$ is not. Furthermore, we show that for any set of bad exponents $B$, there is a sequence $(s_n)$ where $(s_n^k)$ is good for the mean ergodic theorem  exactly when $k$ is not in $B$. We then extend this result to multiple ergodic averages of the form $ \frac1N \sum_{n=1}^N f_1(T^{s_n} x)f_2(T^{2s_n}x)\ldots f_\ell(T^{\ell s_n}x).$  We also prove a similar result for pointwise convergence of single ergodic averages.
\end{abstract}

\thanks{The  first and last authors were supported
 by NSF grants DMS-0701027 and   DMS-0801316.}
\subjclass[2000]{Primary: 37A30; Secondary: 28D05, 11L15}

\keywords{ergodic theorems, ergodic averages, multiple ergodic averages.}
%%\date{\today}
\maketitle

\setcounter{tocdepth}{1}%%Only sections appear in Contents
\tableofcontents

\section{Introduction}
\label{sec:introduction}

\subsection{Main result}
\label{sec:main-result}
It is well known that for any fixed positive integer $k$ the sequence
$1^k,2^k,3^k\dots$ is
good for the mean ergodic theorem. This means that for every measure preserving system
%%$(X,\mathcal{B},\mu,T)$, where $T$ is a  measure preserving transformation
 %%of a probability space $(X,\mathcal{B},\mu)$,
 and function $f$ in $L^2(\mu)$, the averages
\begin{equation}
  \label{eq:1}
  \frac1N \sum_{n=1}^N f(T^{n^k}x)
\end{equation}
converge in the $L^2(\mu)$ norm as $N\to\infty$. Using the spectral theorem for unitary operators, this is equivalent to
the convergence of the averages
\begin{equation}
  \label{eq:2}
  \frac1N \sum_{n=1}^N e^{2\pi i n^k\alpha}
\end{equation}
as $N\to\infty$ for any real number $\alpha$. %%(see for example \cite{RW}).

An illustrative question for our paper is the following.  Is there a
sequence $s_1,s_2,s_3\dots$ of positive integers so that the averages
\begin{equation}
  \label{eq:3}
  \frac1N \sum_{n=1}^N e^{2\pi i s_n\alpha}
\end{equation}
converge for any real number $\alpha$, but for some $\alpha$, the
averages
\begin{equation}
  \label{eq:4}
  \frac1N \sum_{n=1}^N e^{2\pi i s_n^2\alpha}
\end{equation}
do not converge as $N\to\infty$? In other words, we ask if there is a sequence
$s_1,s_2,s_3\dots$ of positive integers which is  good for
the mean ergodic theorem, but the sequence of squares
$s_1^2,s_2^2,s_3^2\dots$ of the sequence is not  good for
the mean ergodic theorem?

Similarly, we can ask: is there a sequence $s_1,s_2,s_3\dots$ of
positive integers which is \emph{not}  good for the mean
ergodic theorem, but the sequence of squares $s_1^2,s_2^2,s_3^2\dots$
of the sequence \emph{is} good for the mean ergodic
theorem?

Perhaps surprisingly, the answer to \emph{both} questions is
\emph{yes}, indicating that the convergence properties of positive powers of a sequence are independent of those of the original sequence.  In fact, in this paper we prove
the following result showing the total independence of powers of a sequence
for the mean ergodic theorem.

\begin{bigthm}
  \label{sec:main-result-1}
  Let $B$ be an arbitrary set  of  positive integers.
  Then there exists an increasing  sequence $s_1,s_2,s_3\dots$ of positive integers
  such that
  \begin{itemize}
  \item The sequence $s_1^g,s_2^g,s_3^g\dots$ is good for the mean
    ergodic theorem for any ``good'' exponent $g\in \mathbb N
    \setminus B$.
  \item The sequence $s_1^b,s_2^b,s_3^b\dots$ is not good for the mean
    ergodic theorem for any ``bad'' exponent $b\in B$.
  \end{itemize}
\end{bigthm}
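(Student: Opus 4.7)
The plan is to construct $(s_n)$ as a concatenation of long blocks $I_k = (N_{k-1}, N_k]$ chosen with $|I_k|/N_k \to 1$, on each of which $s_n$ traces out an arithmetic progression $c_k + m_k(n - N_{k-1})$. Because each block dominates the Cesaro average at its right endpoint, the behavior of $\frac{1}{N}\sum_{n=1}^N e^{2\pi i s_n^g \alpha}$ is governed by the block averages
\[
A_k(g,\alpha) \;=\; \frac{1}{|I_k|}\sum_{j=1}^{|I_k|} e^{2\pi i (c_k + m_k j)^g \alpha},
\]
which are Weyl sums in $j$ of degree $g$ with leading coefficient $m_k^g\alpha$. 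By the spectral theorem for unitary operators, it suffices to arrange that for each $b \in B$ some test frequency $\alpha_b$ makes $A_k(b,\alpha_b)$ oscillate between distinct limits along $k$, while for each $g \in \N \setminus B$ and every $\alpha \in \R$ the $A_k(g,\alpha)$ converge to a common limit $L(g,\alpha)$.

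For the divergence part, enumerate $B = \{b_1, b_2, \dots\}$ and fix rational test frequencies $\alpha_{b_j} = 1/q_j$ with the $q_j$ distinct primes. Assign each block a \emph{type} $(j,\varepsilon) \in \N \times \{0,1\}$ via a diagonal enumeration so that every type appears infinitely often. On a block of type $(j,1)$, enforce $q_j \mid c_k$ and $q_j \mid m_k$, making every summand in $A_k(b_j,\alpha_{b_j})$ equal to $1$; on a block of type $(j,0)$, choose $c_k, m_k$ in a class modulo a suitable prime power so that Weyl's inequality forces $A_k(b_j,\alpha_{b_j})$ close to $0$. Alternation between these types then produces the desired divergence for each bad exponent.

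The expected main obstacle is the convergence direction: showing that for \emph{every} $g \in \N \setminus B$ and \emph{every} $\alpha \in \R$, the block averages $A_k(g,\alpha)$ converge to a common limit $L(g,\alpha)$. For irrational $\alpha$, the quantity $m_k^g \alpha$ is irrational and Weyl's theorem for polynomials handles the block sum provided $|I_k|$ is chosen enormous relative to the congruence constraints imposed on $(c_k, m_k)$. For rational $\alpha = p/q$, one must show that the residues $(c_k, m_k) \pmod q$ eventually stabilize in the right way; this is handled by a countable diagonal argument: enumerate the pairs $(g,\alpha)$ with $g \in \N \setminus B$ and $\alpha$ rational, and use the Chinese remainder theorem so that, at stage $k$, the congruences imposed by the bad-type requirements (involving only finitely many primes $q_{b_1}, \dots, q_{b_k}$) coexist with the stabilization requirements for the $k$-th enumerated pair. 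The critical point is that $g \notin B$ means no step of the construction ever imposed a congruence directly on $m_k^g$, leaving enough freedom to force $A_k(g,\alpha)$ to stabilize. Since $|I_k|/N_k \to 1$, convergence of block averages transfers to convergence of the full Cesaro averages, completing the proof.
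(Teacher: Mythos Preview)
Your plan has a genuine gap in the convergence direction, and it occurs precisely at the rational test frequencies $\alpha_{b_j}=1/q_j$ that you introduced to witness divergence.

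On a block of type $(j,1)$ you impose $q_j\mid c_k$ and $q_j\mid m_k$. Then $(c_k+m_kj')^g\equiv 0\pmod{q_j}$ for \emph{every} exponent $g$, so $A_k(g,1/q_j)=1$ for all $g$, good or bad. On a block of type $(j,0)$ you impose a different residue class (say $\gcd(m_k,q_j)=1$), and then $A_k(g,1/q_j)$ is close to the complete sum $q_j^{-1}\sum_{r=0}^{q_j-1}e(r^g/q_j)$, which is \emph{not} equal to $1$ for any $g$. Since both types recur infinitely often, the block averages $A_k(g,1/q_j)$ oscillate for every $g$, including every good $g\in\N\setminus B$. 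Your diagonal ``stabilization'' argument cannot repair this: the bad-type requirements force $(c_k,m_k)\bmod q_j$ to alternate forever, so they never stabilize modulo $q_j$. The sentence ``$g\notin B$ means no step of the construction ever imposed a congruence directly on $m_k^g$'' misidentifies the issue---the constraints are on $c_k,m_k$ themselves, and for prime $q_j$ a congruence on $m_k$ determines $m_k^g\bmod q_j$ for all $g$ at once. Congruence conditions simply do not separate different powers in the way you need.

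The paper's construction avoids this by using a \emph{single irrational} test frequency $\alpha$ for all bad exponents, and by selecting $n$ according to where $n^{b_j}\alpha$ falls on the torus (into arcs $I_+$ or $I_-$) rather than by congruences. The decoupling between good and bad exponents then comes from an analytic fact: for $b\neq g$ and $\alpha$ irrational, van der Corput's inequality gives
\[
\lim_{N\to\infty}\sup_{\beta\in\R}\Big|\frac1N\sum_{n=1}^N e(ln^{b}\alpha+n^g\beta)\Big|=0,
\]
so the selection weight $\indicator_{I_\pm}(n^{b_j}\alpha)-\tfrac14$ is asymptotically orthogonal to $e(n^g\beta)$ for every $\beta$. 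This reduces the good-exponent averages along the selected set to $\tfrac14$ times the full averages $\frac1N\sum_{n=1}^N e(n^g\beta)$, which converge by Weyl. The oscillation for bad exponents is then forced by alternating the arcs $I_+,I_-$ on consecutive dyadic blocks. No arithmetic-progression or Chinese-remainder structure is used; the mechanism is equidistribution, not congruences.
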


Using the spectral theorem for unitary operators, we get the following equivalent formulation of our theorem:
\begin{bigthmp}
  \label{sec:main-result-2}
  Let $B$ be an arbitrary set of positive integers.  Then there exists
  an increasing sequence $s_1,s_2,s_3\dots $ of positive integers such that
  \begin{itemize}
  \item For $g\in \mathbb N \setminus B$, the averages
    \begin{equation}
      \label{eq:5}
      \frac1N \sum_{n=1}^N e^{2\pi i s_n^g\alpha}
    \end{equation}
    converge as $N\to\infty$ for any real number $\alpha$.
  \item For $b\in B$, there exists a real number $\alpha$ such that the
    averages
    \begin{equation}
      \label{eq:6}
      \frac1N \sum_{n=1}^N e^{2\pi i s_n^b\alpha}
    \end{equation}
    do not converge  as $N\to\infty$.
  \end{itemize}
\end{bigthmp}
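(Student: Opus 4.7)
The plan is to construct $(s_n)$ as a concatenation of increasingly long blocks $B_1, B_2, B_3, \ldots$, each designed to serve two aims at once: (i) advance an alternation that forces $\frac{1}{N}\sum_{n\le N} e^{2\pi i s_n^b \alpha_b}$ to oscillate for a chosen witness $\alpha_b$ at every bad exponent $b \in B$, and (ii) not damage the convergence of $\frac{1}{N}\sum_{n \le N} e^{2\pi i s_n^g \alpha}$ for any good exponent $g \in \N \setminus B$ and any $\alpha \in \R$.

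First I would enumerate $B = \{b_1, b_2, \ldots\}$ and attach to each $b_i$ a witness $\alpha_i \in \R$ together with two distinct complex targets $T_{i,+}, T_{i,-}$ such that both can be approximated arbitrarily well as block-averages $\frac{1}{L}\sum_n e^{2\pi i s_n^{b_i} \alpha_i}$ when the $s_n$ are drawn from a sufficiently rich structured family $\mathcal F_i$ (for example, long arithmetic progressions, or values of a chosen polynomial sequence, so that quantitative Weyl-type equidistribution is at our disposal). Then I would organize all requirements into a countable list: on the ``bad'' side, the demand to alternately steer the partial averages toward $T_{i,+}$ and $T_{i,-}$ for each $b_i$; on the ``good'' side, a countable collection of pairs $(g, \alpha)$ with $g \notin B$ and $\alpha$ in a countable dense set of reals (irrational $\alpha$ will be handled uniformly via Weyl bounds on polynomial exponential sums over long progressions). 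At stage $k$, a block $B_k$ is appended which (a) produces the prescribed bad-side target, and (b) for every previously listed good pair $(g, \alpha)$ produces a block-average within $1/k$ of a pre-declared limit $L_{g, \alpha}$.

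By choosing the lengths $|B_k|$ to grow much faster than the cumulative length so far, each new block dominates the Ces\`aro average of the entire prefix. Consequently, for every bad $b_i$ the partial averages at $\alpha_i$ oscillate between $T_{i,+}$ and $T_{i,-}$ along the block endpoints, so convergence fails; for every listed good $(g, \alpha)$ the Ces\`aro averages converge to $L_{g, \alpha}$. Convergence at the remaining irrational $\alpha$ for good exponents is then automatic because each block is an arithmetic progression and $\sum_\ell e^{2\pi i (a + \ell d)^g \alpha}$ is small uniformly in $a,d$ by classical Weyl estimates, with constants depending only on $g$.

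The main obstacle is the step-wise compatibility: at each stage $k$ one must produce a block in $\mathcal F_{i(k)}$ of prescribed length that \emph{simultaneously} realizes the single bad-side target and the finitely many previously listed good-side targets to within arbitrary precision. I expect this to be the technical heart of the argument, resolved by combining (i) quantitative equidistribution in long arithmetic progressions, which makes the good-side block-averages depend only weakly on the fine choice of block, with (ii) enough residual freedom inside $\mathcal F_i$ (e.g.\ the residue class of the starting point modulo a modulus tailored to $b_i$ and $\alpha_i$, or a small perturbation of the common difference) to steer the lone bad-side average onto either $T_{i,+}$ or $T_{i,-}$ while keeping the others intact.
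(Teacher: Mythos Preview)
There is a real gap. Your good-side argument hinges on the claim that $\sum_\ell e\bigl((a+\ell d)^g\alpha\bigr)$ is small ``uniformly in $a,d$ by classical Weyl estimates, with constants depending only on $g$.'' This is false: already for $g=1$ one has $\bigl|\tfrac{1}{L}\sum_{\ell\le L}e(\ell d\alpha)\bigr|$ as close to $1$ as one likes by taking $d$ with $\|d\alpha\|$ small, and for general $g$ the Weyl bound depends on the Diophantine type of $d^g\alpha$, so there is no uniformity in $d$. If instead you freeze $d=1$ (blocks are intervals $[M_k,M_k+L_k)$) you do recover good-side convergence for every $\alpha$, but then the bad side collapses: for an irrational witness $\alpha_b$ the block average $\tfrac{1}{L}\sum_{\ell<L}e\bigl((M+\ell)^b\alpha_b\bigr)$ tends to $0$ uniformly in $M$, and for a rational witness it tends to a value independent of $M$, so there are no two distinct targets $T_{i,+},T_{i,-}$ to alternate between. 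Tweaking the residue class of the starting point does not help for the same reason. The ``main obstacle'' you flag at the end is therefore not a technicality to be filled in later; it is the missing idea, and the mechanisms you propose (perturb $d$, or the residue class) cannot supply it.

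For contrast, the paper does not build $(s_n)$ from arithmetic progressions at all. It fixes a single irrational $\alpha$ (the same witness for every $b\in B$) and, on the $j$-th dyadic window, lets $S$ consist of those $n$ with $n^{b_j}\alpha\bmod 1$ lying in one of two fixed arcs $I_+,I_-$ (alternating between the two halves of the window), where $(b_j)$ ranges over $B$ with each value recurring infinitely often but new values introduced only after the relevant Weyl sums have become uniformly small. The alternation of arcs forces $\tfrac{1}{N}\sum_{n\le N}\indicator_S(n)\,e(n^b\alpha)$ to oscillate for each $b\in B$. For good $g$, the key point is that $\indicator_S(n)-\tfrac14$ is (up to an $L^1$ approximation) a trigonometric polynomial in $n^{b_j}\alpha$, so the good-side averages reduce to sums $\tfrac{1}{N}\sum_n e\bigl(k n^{b_j}\alpha+n^g\beta\bigr)$ with $k\ne 0$; because $\alpha$ is a \emph{fixed} irrational and $b_j\ne g$, van der Corput differencing kills these uniformly in $\beta$. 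That fixed irrational coefficient is exactly what supplies the uniformity your scheme tried to extract from Weyl bounds uniform in $d$.
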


Similar results related to issues of recurrence were proved in
 \cite{FLW2}.  The original motivation to search for results which
express the independence of powers of a sequence for various
properties, comes from the papers of Deshouillers, Erd\H os, S\' ark\"ozy (\cite{DES})
and Deshouillers, Fouvry (\cite{DF}).  In these papers,
the authors prove results analogous to ours but for bases of the
positive integers.

In our paper, we generalize Theorem~\ref{sec:main-result-1} to
multiple ergodic averages and prove a version for pointwise
convergence of single ergodic averages.  We state these generalizations in the next subsection,
where we also give precise definitions of the concepts used throughout
the paper.

\subsection{Definitions and generalizations}
\label{sec:defin-gener}
All along the article we shall use the word  {\it system},
 or the term {\it measure preserving system},
 to designate a quadruple $(X,\mathcal{B},\mu,T)$, where $T$ is an {\it invertible}
 measure preserving transformation of a probability space $(X,\mathcal{B},\mu)$.
 By insisting that our system is invertible, we make sure that $T^k$ is well defined
 for negative integers $k$.

 %%$(X,\mathcal{B},\mu, T)$, where $(X,\mathcal{B},\mu)$ is a
%%probability space, and $T\colon X\to X$ is an invertible  measure preserving  map.

\begin{definition}%[good sequence for mean $\ell$-convergence]
  Let $\ell$ be a positive integer, $(s_n)$ be a sequence
  of integers, and  $(X,\mathcal{B},\mu, T)$ be a system.

  We say that the sequence of integers $(s_n)$ is \emph{good for
    $\ell$-convergence for the system} $(X,\mathcal{B},\mu,T)$ if for any
  bounded, measurable functions $f_1,f_2,\dots,f_\ell$, the averages
  \begin{equation*}
    \frac1N \sum_{n=1}^N f_1(T^{s_n}x)\cdot f_2(T^{2s_n}x)\cdot \ldots
    \cdot f_\ell(T^{\ell s_n}x)
  \end{equation*}
  converge in the $L^2(\mu)$ norm as $N\to\infty$.

We say the sequence $(s_n)$ is \emph{universally good for $\ell$-convergence} if it is good for  $\ell$-convergence for any system $(X,\mathcal{B},\mu,T)$. We  often abbreviate this by saying that ``$(s_n)$ is good for $\ell$-convergence" and  refer to the case $\ell=1$ as \emph{single mean  convergence}.
\end{definition}

We refer the reader to  \cite{RW} for examples of sequences that are good for single mean convergence.
Examples of sequences that are  good for $\ell$-convergence for every $\ell\in \N$, include $s_n=n$, shown by Host and Kra (\cite{HK1}) and later by Ziegler (\cite{Z}), as well as $s_n=p(n)$ where $p(n)$ is an integer polynomial, as shown by Host and Kra (\cite{HK2}) and Leibman (\cite{L1}).

We give a strengthening of Theorem~A that
 related to problems of $\ell$-convergence:
\begin{theorem}\label{T:multiple}
  Let $B$ be an arbitrary set of positive integers  and $\ell\in \N$.
  Then there exists an increasing  sequence  $(s_n)$ of positive integers such that
  \begin{itemize}
  \item For every $g\in \N\setminus B$, the sequence $(s_n^g)$ is good
    for  $\ell$-convergence.
  \item For every $b\in B$, the sequence $(s_n^b)$ is not good for
   $\ell$-convergence.
  \end{itemize}
\end{theorem}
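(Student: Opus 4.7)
My approach would be to refine the block construction behind Theorem~A so that on each ``good block'' the sequence is polynomial in $n$, automatically giving $\ell$-convergence by the polynomial multiple ergodic theorem. Concretely, I would build $(s_n)$ by concatenating long intervals $I_j=[N_j,N_{j+1})$ on which $s_n=c_j+(n-N_j)$ is an arithmetic progression (so $(s_n^g)$ restricted to $I_j$ is the integer polynomial $(c_j+n-N_j)^g$), interleaved with much shorter ``bad blocks'' targeted at each $b\in B$. A bad block for $b$ would consist of integers of a specific arithmetic form --- for instance $s_n=q_b m$ for varying $m$, with $q_b$ a prime chosen so that $q_b^b\alpha_b\in\Z$ for a target $\alpha_b\in\T$, while $q_b^g\alpha_b$ remains irrational for $g\ne b$. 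This decouples the congruential ``badness'' of power $b$ from the equidistribution of power $g$.

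For each good exponent $g\notin B$, I would arrange the bad-block schedule so that the union of bad blocks has upper density zero; since every $f_i$ is bounded, the contribution of bad indices to the multilinear average $\frac1N\sum_{n=1}^N f_1(T^{s_n}x)\cdots f_\ell(T^{\ell s_n}x)$ is $o_N(1)$ in $L^2(\mu)$. On each good block $I_j$ the averages are governed by the polynomial sequence $(c_j+n-N_j)^g$, and the theorems of Host--Kra and Leibman give $L^2$-convergence of polynomial multiple averages. A standard telescoping over the increasingly long good blocks, together with the vanishing bad-block contribution, yields $L^2$-convergence for $(s_n^g)$. For each bad exponent $b\in B$, I would schedule bad blocks so that along one subsequence of scales $M_k$ the bad indices form a substantial fraction of $[1,M_k]$ (pushing $\frac{1}{M_k}\sum e^{2\pi i s_n^b\alpha_b}$ toward a value near $1$), while along intermediate scales immediately after a very long good block the same Fourier average returns near its generic value, preventing convergence. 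Taking $f_2=\cdots=f_\ell=\mathbf 1$ and $f_1$ a character on a rotation system transfers this failure to multiple convergence via the spectral theorem.

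The main obstacle is balancing the two competing constraints: each bad block for $b$ must be long enough (at its scale) to derail single convergence of $(s_n^b)$, but the cumulative density of bad blocks must vanish so that every $g\notin B$ still enjoys $\ell$-convergence. When $B$ is infinite, these demands must be met simultaneously for all $b\in B$, forcing a careful diagonal schedule in which the growth of $N_{j+1}/N_j$ is chosen fast enough to absorb the $b$-th bad block at stage $j$ into a set of relative size tending to zero. A secondary subtlety is that the large additive shifts $c_j$ at the start of each good block must not obstruct application of polynomial multiple convergence on $I_j$; this is where one exploits the fact that the Host--Kra--Leibman limit for $\frac1L\sum_{n=1}^L f_1(T^{p_1(n)}x)\cdots f_\ell(T^{p_\ell(n)}x)$ depends on the polynomials $p_i$ only through their leading coefficients modulo a characteristic factor, so the convergence is uniform in the shift $c_j$ once $L_j=N_{j+1}-N_j$ is large. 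Combining the uniformity on good blocks with the density-zero bound on bad blocks produces the required dichotomy exactly along the prescribed partition $\N=B\sqcup(\N\setminus B)$.
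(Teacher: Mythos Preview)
Your proposal contains a structural contradiction that cannot be repaired within the framework you describe. You require that the union of bad blocks have \emph{upper density zero}, so that for $g\notin B$ their contribution to the multilinear average is $o_N(1)$; and simultaneously that for each $b\in B$ there be a subsequence of scales $M_k$ along which the bad indices occupy a \emph{substantial fraction} of $[1,M_k]$. These two conditions concern the same set of indices and are mutually exclusive: upper density zero means precisely that no such subsequence exists. If you retreat to lower density zero, the $o_N(1)$ bound fails at the scales $M_k$, and you lose control of the good exponents there. Separately, the arithmetic of your bad-block device is impossible as stated: if $q_b^b\alpha_b\in\Z$ then $\alpha_b$ is rational, so $q_b^g\alpha_b$ is rational for every $g$; it can never be irrational for $g\ne b$. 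Finally, the assertion that the Host--Kra--Leibman limit of $\frac1L\sum_{n=1}^L\prod_i f_i(T^{i(c+n)^g}x)$ is insensitive to $c$ is not the statement those theorems prove and would need its own argument.

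The paper takes a completely different route, avoiding any good/bad spatial separation. It defines $(s_n)$ as the increasing enumeration of a set $S\subset\N$ of density $1/4$, selected by conditions of the form $(n^{\ell b_j}+n^{b_j})\alpha\in I_\pm$ on dyadic blocks. The same indices serve both purposes: for $b\in B$, the selection rule forces $\mathbb{1}_S(n)$ to correlate with $e\big((n^{\ell b}+n^b)\alpha\big)$ on the infinitely many blocks with $b_j=b$, and Lemma~\ref{L:FLW} converts divergence of $\frac1N\sum e\big((s_n^{\ell b}+s_n^b)\alpha\big)$ into failure of $\ell$-convergence for $(s_n^b)$. For $g\notin B$, one writes $\mathbb{1}_S(n)=\tfrac14+(\mathbb{1}_S(n)-\tfrac14)$; the constant part inherits convergence from Host--Kra--Leibman for $(n^g)$, while the oscillatory part is, after Fourier approximation, a combination of weights $e\big(k(n^{\ell b_j}+n^{b_j})\alpha\big)$ whose correlation with $\prod_i T^{in^g}f_i$ vanishes by Lemmas~\ref{L:1'} and~\ref{L:2'} (nil-factor machinery and PET induction). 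Thus the dichotomy is encoded in a correlation structure on a single positive-density set, not in a block schedule, and the tension you identified never arises.
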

Next we introduce a notion related
to pointwise convergence of ergodic averages.
\begin{definition}%[good sequence for pointwise convergence]
  Let $(s_n)$ be a sequence of integers, and let
  $(X,\mathcal{B},\mu, T)$ be a system.

  We say that the sequence $(s_n)$ is \emph{good for  the pointwise
   ergodic theorem for the system $(X,\mathcal{B},\mu,T)$} if for any $f\in
  L^2(\mu)$, the averages
  \begin{equation*}
    \frac1N \sum_{n=1}^N f(T^{s_n}x)
  \end{equation*}
  converge as $N\to\infty$ for almost every $x$.

We say the sequence $(s_n)$ is \emph{universally good for the pointwise ergodic theorem} if it is good for the pointwise ergodic theorem for any system $(X,\mathcal{B},\mu,T)$. We often abbreviate this by saying that ``$(s_n)$ is good for the pointwise ergodic theorem."
\end{definition}
\begin{remark}
The preceding definition
 %%of a good sequence for the pointwise ergodic theorem
 is given for the class of functions in $L^2(\mu)$. It is known  that similar  definitions for the class of functions in $L^p(\mu)$  gives rise
to different notions for different values of $p\in [1,+\infty]$  (see [RW, Chapter VII]). However, for the sequences that we construct in the present paper (and which have  positive density) the properties of being good for the class of functions in $L^p(\mu)$ are equivalent for all $p>1$. For $p=1$, our arguments do not apply, the reason being that there is no strong maximal inequality along the sequence $(n^k)$ in $L^1(\mu)$.
\end{remark}

Notice that any sequence that is good for the pointwise ergodic theorem is also good for the mean ergodic theorem.
Various examples of sequences that are good for the pointwise ergodic theorem are known
(see e.g. \cite{RW}).  A particular example that will be used later is the case of a sequence  $(p(n))$ where $p$ is any polynomial with integer coefficients. This case was treated by Bourgain in  \cite{Bou1}.

We give a strengthening of Theorem~A
 related to problems of pointwise convergence:
\begin{theorem}\label{T:main}
 Let $B$ be an arbitrary set of positive integers. Then there exists an
  increasing sequence $(s_n)$ of positive integers such that
  \begin{itemize}
  \item For every $g\in \N\setminus B$, the sequence $(s_n^g)$ is good
    for the pointwise ergodic theorem.
  \item For every $b\in B$, the sequence $(s_n^b)$ is not good for
    the mean ergodic theorem.
  \end{itemize}
\end{theorem}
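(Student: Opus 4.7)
The plan is to mimic the block-style construction used for Theorem~A and Theorem~\ref{T:multiple}, but with the additional constraint that on every block the sequence $(s_n)$ is itself polynomial in $n$ — in the simplest case, an arithmetic progression — so that within each block the iterates $T^{s_n^g}$ are iterates along a polynomial sequence, to which Bourgain's pointwise theorem \cite{Bou1} applies. Concretely, partition $\N$ into consecutive intervals $I_1<I_2<\cdots$ of rapidly increasing length, and on each $I_k$ set $s_n = a_k + d_k(n-\min I_k)$. The default is an ``identity block'' with $d_k=1$, while a sparse but infinite sub-family of blocks is labeled by an enumeration $(b_j)$ of $B$ in which every $b\in B$ appears infinitely often; on the $j$-th such block the common difference $d_k$ is chosen to sabotage the exponent $b_j$.

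The failure along bad exponents $b\in B$ is handled exactly as in Theorem~A via the spectral-theorem reduction: one produces a rational $\alpha_b=p_b/q_b$ for which $s_n^b\bmod q_b$ is a constant on $b$-blocks that differs from the typical residue encountered on identity blocks. By taking each new block much longer than the union of all previous blocks, the averages $\frac1N\sum_{n=1}^N e^{2\pi i s_n^b\alpha_b}$ take two visibly different values along the subsequences $N=\max I_k$ indexed by identity blocks and by $b$-blocks, and hence fail to converge.

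For a good exponent $g\notin B$, the key point is that on each block $I_k$ the restriction of $(s_n^g)$ is a polynomial in $n$ of degree $g$. Bourgain's pointwise theorem therefore guarantees, for every system $(X,\mathcal{B},\mu,T)$ and every $f\in L^2(\mu)$, the a.e.\ convergence of the block averages $\frac{1}{|I_k|}\sum_{n\in I_k} f(T^{s_n^g}x)$, together with a uniform $L^2$ maximal inequality along the polynomial family $\{(a+dn)^g : a,d\in\N\}$. Using the rapid growth of $|I_k|$ and Abel summation to splice the block averages, and using the maximal inequality to control partial sums interior to a block, one concludes a.e.\ convergence of the full Cesàro average $\frac1N\sum_{n=1}^N f(T^{s_n^g}x)$. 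On ergodic systems the block limits all equal $\int f\,d\mu$ (any arithmetic progression of degree $g$ raised to the $g$ generates a polynomial sequence that equidistributes over $T$-orbits), and one passes to the ergodic decomposition in the general case, so the pasted limit is well defined.

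The principal obstacle is this balancing act: perturbation blocks must be long enough and frequent enough to keep the Cesàro averages for bad exponents oscillating, yet their arithmetic structure must be rigid enough — and the different blocks' Bourgain-limits must match well enough — that the good-exponent averages still converge pointwise. The hardest step in practice is the pointwise argument sketched above: establishing a maximal inequality that is uniform in the parameters $(a_k,d_k)$ (which does follow from the method in \cite{Bou1}) and matching the block limits across blocks with different common differences, so that in-block a.e.\ convergence can be upgraded to a.e.\ convergence of the full Cesàro average.
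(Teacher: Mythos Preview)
Your plan has a real gap in the pointwise half. The claim that on ergodic systems every block limit $\lim_{M}\frac{1}{M}\sum_{m\le M} f\bigl(T^{(a_k+d_k m)^g}x\bigr)$ equals $\int f\,d\mu$ is false whenever $T$ has nontrivial rational spectrum. On $X=\Z/3\Z$ with $Tx=x+1$ and (say) $g=2\notin B$, an identity block gives limit $\int f\,d\mu$ (since $n^2\bmod 3$ hits $0$ with frequency $1/3$), whereas a perturbation block with $3\mid d_k$ gives $(a_k+d_k m)^2\equiv a_k^2\pmod 3$ constant in $m$, hence block limit $f(x+a_k^2)$. More generally, whenever $d_k>1$ the perturbation block freezes the residue modulo every prime dividing $d_k$, so on the corresponding finite rotation its limit disagrees with the identity-block limit for \emph{every} exponent $g$. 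And you cannot escape by making the perturbation blocks sparse: for the bad-exponent Ces\`aro averages to oscillate those blocks must carry positive upper density along a subsequence, which then forces the mismatch into the full average for good exponents as well. You rightly flag ``matching the block limits'' as the hardest step; within the arithmetic-progression framework it is in fact impossible, not merely hard.

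The paper's route is quite different: it never perturbs the iterates. It takes $(s_n)$ to be the increasing enumeration of a density-$\tfrac14$ set $S\subset\N$ cut out on dyadic blocks by the condition $n^{b_j}\alpha\in I_\pm$, with $\alpha$ the golden mean. Then $\frac1N\sum_{n\le N}\indicator_S(n)\,T^{n^g}f$ splits as $\tfrac14\cdot\frac1N\sum_{n\le N} T^{n^g}f$ plus an average with weight $\indicator_S(n)-\tfrac14$. The first piece converges a.e.\ by Bourgain along the single sequence $(n^g)$. For the second, quantitative Weyl estimates (using the bad approximability of $\alpha$) yield a uniform power saving $\sup_\beta\bigl|\frac1N\sum_{n\le N}(\indicator_S(n)-\tfrac14)\,e(n^g\beta)\bigr|\le C\,N^{-\eta(N)}$ with $\eta(N)=(\log_2 N)^{-1/2}$; this is summable along lacunary $N$, and the spectral theorem converts it into a.e.\ vanishing of the weighted piece. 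The weight $\indicator_S-\tfrac14$ is oblivious to the rational spectrum of the system, so no block-limit matching is ever needed.
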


\subsection{Further remarks and conjectures}
\label{sec:furth-remarks-conj}
If for a given $\ell\in\mathbb{N}$, the sequence $(s_n)$ is good for  $\ell$-convergence,
then, as Theorem~\ref{T:multiple} shows, we cannot assert in general
that any of its powers $(s_n^k)$, where $k\geq 2$, is good for
$\ell$-convergence.  In  contrast with this, we expect the
following result to be true:
\begin{conjecture}\label{C:everyl}
  Suppose that the sequence of integers  $(s_n)$ is good for  $\ell$-convergence for every $\ell\in \N$.
Then  for every $k\in\N$, the sequence $(s_n^k)$ is good for
  $\ell$-convergence for every $\ell\in \N$.
\end{conjecture}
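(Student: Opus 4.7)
My plan is to attack the conjecture through Host--Kra characteristic factors and a polynomial-to-linear lifting trick on nilmanifolds. First, I would invoke the Host--Kra structure theorem for multiple ergodic averages: the $L^2$ limit of
\[
\frac{1}{N}\sum_{n=1}^N f_1(T^{s_n^k}x) f_2(T^{2 s_n^k}x) \cdots f_\ell(T^{\ell s_n^k}x)
\]
depends only on the projections of the $f_i$ to some nilfactor $Z_d$ (where $d$ is a function of $\ell$ and $k$), itself an inverse limit of nilsystems. So it suffices to prove convergence when $(X,\mathcal{B},\mu,T)=(G/\Gamma,\mu_H,R_g)$ is a nilsystem and the $f_i$ are continuous. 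In that setting, the ergodic average becomes a question about the equidistribution on $(G/\Gamma)^\ell$ of the polynomial orbit $n\mapsto(g^{s_n^k}x_0, g^{2s_n^k}x_0,\dots,g^{\ell s_n^k}x_0)$.

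The second step is to realize this polynomial orbit as the linear orbit of a point on a larger nilmanifold. Using the classical fact, going back to Leibman and used systematically by Bergelson--Leibman in the polynomial Szemer\'edi machinery, that polynomial mappings into nilpotent groups can be straightened out by working inside a free nilpotent extension, one constructs a nilmanifold $\tilde G/\tilde\Gamma$, an element $\tilde g\in\tilde G$, a base point $\tilde x_0$, and a continuous projection $\pi:\tilde G/\tilde\Gamma\to(G/\Gamma)^\ell$ such that
\[
\pi\bigl(\tilde g^{\,m}\tilde x_0\bigr)
=\bigl(g^{m^k}x_0,\,g^{2m^k}x_0,\dots,g^{\ell m^k}x_0\bigr)
\qquad\text{for every integer }m.
\]
Concretely, one exploits Hall--Petresco type identities (for example, $(ab)^n=a^n b^n [b,a]^{\binom{n}{2}}\cdots$ in a free nilpotent group of the appropriate class) to encode the monomials $n, n^2,\dots, n^k$ as components of a single linear orbit in a sufficiently high step nilpotent group.

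Specializing $m=s_n$ then converts the problem into convergence of $\frac{1}{N}\sum_n F(\tilde g^{s_n}\tilde x_0)$ for continuous $F$ on $\tilde G/\tilde\Gamma$, which is precisely single-mean convergence of $(s_n)$ acting on a nilsystem. The hypothesis -- that $(s_n)$ is good for $m$-convergence for \emph{every} $m$ -- translates (via Host--Kra again and a density-of-nilfactors argument) into $(s_n)$ being good on every nilsystem with every continuous test function. So the lifted average converges, hence so does the original, proving the conjecture.

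The main obstacle I anticipate is step two: producing the lift $(\tilde G/\tilde\Gamma,\tilde g,\tilde x_0,\pi)$ in a way that is robust enough to apply for arbitrary nilsystems $G/\Gamma$ and arbitrary $k,\ell$. Strictly speaking, the polynomial-linearization technology from Bergelson--Leibman was devised for the integer variable $n$ itself, and one needs to verify that nothing is lost when $n$ is replaced by the sub-sequence $(s_n)$; equivalently, one needs that ``universally good on nilsystems for continuous functions'' is preserved under the map $n\mapsto s_n$. I expect this passes cleanly because the continuous equidistribution statement on $\tilde G/\tilde\Gamma$ is exactly the spectral/nilpotent reformulation of mean-convergence. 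A secondary subtlety is that the appropriate nilfactor may have unbounded step as $\ell,k$ vary, so one must allow the lift $\tilde G$ to have arbitrarily high step -- but since the hypothesis supplies $\ell$-convergence for \emph{all} $\ell$, this is not a real constraint.
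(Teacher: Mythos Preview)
This statement appears in the paper as a \emph{conjecture}, not a theorem; the authors do not prove it. Your proposal is in fact precisely the route they outline: they remark that Conjecture~\ref{C:everyl} would follow from Conjecture~\ref{C:iff} (the equivalence of $\ell$-convergence with convergence on $\ell$-step nilsystems), and that deduction proceeds via exactly the polynomial-to-linear nilmanifold lifting you describe in your second step. So strategically you and the paper agree; the point is that this route is conditional on an open problem.

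The genuine gap lies in your first step, not the second. You invoke the Host--Kra structure theorem to assert that the $L^2$ behaviour of $\frac{1}{N}\sum_{n=1}^N \prod_{j=1}^\ell T^{j s_n^k}f_j$ is controlled by a nilfactor, but that theorem is proved for averages along $n$ (or integer polynomials in $n$), not along an arbitrary sequence such as $(s_n^k)$. Justifying this reduction amounts to showing that if some $f_j$ is orthogonal to the relevant nilfactor then the average along $(s_n^k)$ vanishes --- which is exactly the hard direction of Conjecture~\ref{C:iff} (that convergence on nilsystems implies $\ell$-convergence) applied to the sequence $(s_n^k)$. The paper notes that this direction is known only when the range has positive upper density (via the Bergelson--Host--Kra nilsequence decomposition), and for $k\ge 2$ the set $\{s_n^k\}$ typically has density zero. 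Equivalently, in the decomposition $c(m)=\psi(m)+e(m)$ of the multi-correlation sequence into a nilsequence plus a uniform-density-zero error, nothing in the hypotheses forces $\frac{1}{N}\sum_n e(s_n^k)\to 0$. Your step two (the Leibman-type lifting) is standard and not the obstacle; the missing ingredient is the characteristic-factor reduction for $(s_n^k)$, which is why the statement remains open.
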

To support this conjecture, let us mention that if the sequence
$(s_n)$ is good for $2$-convergence then the sequence $(s_n^2)$ is
good for single mean  convergence (see Lemma~\ref{L:FLW} below). In fact,
Conjecture~\ref{C:everyl} would be true if the following more general
statement holds:
\begin{conjecture}\label{C:kl}
  Suppose that the sequence of integers $(s_n)$ is good for mean $(k\ell)$-convergence. Then the sequence $(s_n^k)$ is good for
  $\ell$-convergence.
\end{conjecture}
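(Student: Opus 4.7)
The plan is to reduce the $\ell$-fold ergodic average along $(s_n^k)$ to a single mean average along $(s_n)$ on a nilsystem of higher step, and then apply the hypothesis. First, by the Host--Kra--Ziegler structure theorem, the $\ell$-step nilfactor is characteristic for $\ell$-convergence on any ergodic system, so one may restrict attention to an ergodic $\ell$-step nilsystem $(G/\Gamma,\mu,T)$, where $T$ acts by translation by some $a\in G$, and to continuous functions $f_1,\ldots,f_\ell$.

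Next, the orbit $m\mapsto(a^{m^k}x,a^{2m^k}x,\dots,a^{\ell m^k}x)$ is a polynomial sequence of degree $k$ taking values in the product nilmanifold $(G/\Gamma)^{\ell}$. By Leibman's theorem on polynomial orbits on nilmanifolds, every such orbit arises as the image, under a continuous factor map, of a \emph{linear} orbit $m\mapsto b^m\tilde x$ on some nilmanifold $\tilde G/\tilde\Gamma$ of step at most $k\ell$. Pulling back $f_1\otimes\cdots\otimes f_\ell$ through this factor map yields a continuous function $\tilde F$ on $\tilde G/\tilde\Gamma$, and after substituting $m=s_n$ the original average becomes
\[
\frac{1}{N}\sum_{n=1}^N \tilde F\bigl(b^{s_n}\tilde x\bigr).
\]
By hypothesis $(s_n)$ is good for $(k\ell)$-convergence on every system, and therefore, by taking all but one of the factor functions to be constant, good for single mean convergence as well. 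Applied to the nilsystem $(\tilde G/\tilde\Gamma,\tilde\mu,b\cdot)$ this gives $L^2$ convergence of the lifted average, and hence of the original.

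The principal obstacle is the first step, namely the reduction to $\ell$-step nilsystems for the specific sequence $(s_n^k)$ on an arbitrary measure preserving system. Unlike the polynomial case of Host--Kra and Leibman, the sequence $(s_n^k)$ is not \emph{a priori} known to be equidistributed against nilsequences. One can attempt the reduction via a PET-style induction built on iterated applications of the Hilbert space Van der Corput trick: each application converts the average in question into lower-complexity averages whose building blocks are difference expressions such as $s_{n+h}^k-s_n^k$. After $\ell$ rounds of such differencing, one is led to polynomial combinations of the variables $s_i$ of total degree $k\ell$, precisely the regime in which the hypothesis of $(k\ell)$-convergence of $(s_n)$ can be brought to bear. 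The chief difficulty lies in translating the multi-parameter uniformity demanded by the PET induction into a statement about the single-parameter sequence $(s_n)$, and it is here that an honest proof of Conjecture~\ref{C:kl} seems to require substantial new ideas.
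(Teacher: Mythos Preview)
The statement you are attempting to prove is listed in the paper as a \emph{conjecture}, not a theorem; the paper does not claim a proof. It notes only that the case $\ell=1$ is established in \cite{FLW1}, and that the full conjecture would follow from Conjecture~\ref{C:iff} (the equivalence of $\ell$-convergence with convergence on $\ell$-step nilsystems). So there is no ``paper's own proof'' to compare against.

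Your strategy is precisely the route the paper indicates: reduce to nilsystems, lift the polynomial orbit $m\mapsto (a^{m^k}x,\dots,a^{\ell m^k}x)$ to a linear orbit on a nilmanifold of step at most $k\ell$, and then invoke the hypothesis. You also correctly isolate the genuine obstruction: the first reduction---that the $\ell$-step nilfactor is characteristic for averages along the \emph{specific} sequence $(s_n^k)$ on an arbitrary system---is exactly the open direction of Conjecture~\ref{C:iff}. Your closing paragraph honestly concedes that this step is missing, so what you have written is a plausible outline, not a proof; this matches the paper's own assessment that the conjecture is open.

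One additional gap in your step~2: after lifting to the $k\ell$-step nilmanifold $\tilde G/\tilde\Gamma$, you invoke only \emph{single} mean convergence of $(s_n)$ (obtained by setting all but one function constant). That yields $L^2(\tilde\mu)$-convergence of $\tfrac{1}{N}\sum_{n\le N}\tilde F(b^{s_n}\cdot)$, but what you actually need is convergence at the \emph{particular} lifted point $\tilde x$ (which depends on $x$ and need not be Haar-generic). It is here that the full strength of the $(k\ell)$-convergence hypothesis is required: one should use the (easier) implication $(1)\Rightarrow(3)$ of Conjecture~\ref{C:iff} at level $k\ell$, i.e., that $(k\ell)$-convergence of $(s_n)$ forces the pointwise convergence of $\tfrac{1}{N}\sum_{n\le N}F(a^{s_n}\Gamma)$ on every $k\ell$-step nilmanifold. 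This is the mechanism behind Lemma~\ref{L:FLW} and explains why the hypothesis is $(k\ell)$-convergence rather than merely $1$-convergence.

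Finally, the PET/van der Corput sketch in your last paragraph does not close the gap either: iterated differencing produces expressions in the shifted values $s_{n+h_1},\dots,s_{n+h_r}$, whereas the $(k\ell)$-convergence hypothesis controls only the single-parameter progressions $s_n,2s_n,\dots,k\ell s_n$. Bridging these is, as you say, where new ideas would be needed.
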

The conjecture holds for $\ell=1$. This is shown in \cite{FLW1} and a key ingredient of the proof is
 the spectral theorem for unitary operators which gives
%% enables us
%%to deal effectively with single correlations of the form $$\int f(T^nx)\cdot  \overline{f (x)}\  d\mu,$$
convenient necessary and sufficient conditions for  single mean convergence. We currently do not have such a convenient
characterization for $\ell$-convergence when $\ell\geq 2$. The
following conjecture would fill this gap if true:
\begin{conjecture}\label{C:iff}
  Let $(s_n)$ be a sequence of integers. The following three
  statements are equivalent:
  \begin{itemize}
  \item The sequence $(s_n)$ is good for $\ell$-convergence.

  \item The sequence $(s_n)$ is good for $\ell$-convergence
  for every $\ell$-step nilsystem.\footnote{If $G$ is an
      $\ell$-step nilpotent Lie group and $\Gamma$ is a discrete
      cocompact subgroup, then the homogeneous space $X = G/\Gamma$ is
      called an {\it $\ell$-step nilmanifold}.  If $T_a(g\Gamma) = (ag)
      \Gamma$ for some $a\in G$, $\mathcal{X}$ is the Borel
      $\sigma$-algebra of $X$, and $m$ is the Haar measure on $X$,
      then the system $(X, \mathcal{X}, m, T_{a})$ is called an
      $\ell$-step nilsystem.}
  \item For every $\ell$-step nilmanifold $X=G/\Gamma$, every $a\in
    G$, and $f\in C(X)$, the sequence $\big(\frac{1}{N}\sum_{n=1}^N
    f(a^{s_n}\Gamma)\big)$ converges as $N\to+\infty$.
  \end{itemize}
\end{conjecture}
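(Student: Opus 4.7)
The plan is to address the three implications in turn. For $(1) \Rightarrow (2)$, note that every $\ell$-step nilsystem is an instance of a measure preserving system, so this is immediate. For $(2) \Leftrightarrow (3)$, I would use a standard ``diagonal'' trick: the multiple average
\begin{equation*}
  \frac{1}{N}\sum_{n=1}^N f_1(a^{s_n}x)\,f_2(a^{2s_n}x)\cdots f_\ell(a^{\ell s_n}x)
\end{equation*}
on the nilmanifold $X=G/\Gamma$ is equal to a single average on the product nilmanifold $X^\ell = G^\ell / \Gamma^\ell$ evaluated along the orbit of the diagonal point $(x,\dots,x)$ under translation by $(a, a^2, \ldots, a^\ell) \in G^\ell$. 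Moving the base point from $e\Gamma$ to $g\Gamma$ is absorbed by passing to the conjugate nilmanifold $G/(g\Gamma g^{-1})$, so pointwise convergence at one coset yields convergence at every coset; finally, approximating bounded measurable functions by continuous ones in $L^2$ yields the $L^2$ convergence in (2) from the pointwise convergence for $C(X)$ in (3).

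The heart of the matter is $(2) \Rightarrow (1)$. My plan is to use the Host--Kra structure theory of characteristic factors. After reducing to ergodic systems via ergodic decomposition, I would invoke the Host--Kra theorem, which associates to an ergodic system $(X,\mathcal{B},\mu,T)$ the $\ell$-step pro-nilfactor $Z_\ell$, an inverse limit of $\ell$-step nilsystems. The crucial step would be to establish that $Z_\ell$ is a \emph{characteristic factor} for the $\ell$-linear averages along the sequence $(s_n)$, in the sense that if $\mathbb{E}(f_i \mid Z_\ell) = 0$ for some $i \in \{1,\dots,\ell\}$, then the $L^2$ norm of $A_N := \frac{1}{N}\sum_{n=1}^N f_1(T^{s_n}x)\cdots f_\ell(T^{\ell s_n}x)$ tends to zero. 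Granting this, one replaces each $f_i$ by $\mathbb{E}(f_i \mid Z_\ell)$, thereby reducing the convergence problem to the factor $Z_\ell$; a routine inverse-limit argument then reduces further to convergence on each finite-step nilfactor individually, at which point hypothesis (2) applies and gives convergence.

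The principal obstacle, and the reason the statement remains a conjecture in the paper, is precisely the characteristic factor claim for an \emph{arbitrary} sequence $(s_n)$. For $s_n = n$ this is the celebrated result of Host--Kra and Ziegler, and for $s_n = p(n)$ a polynomial it was proved by Host--Kra and Leibman; in both cases the proof rests on an iterated Van der Corput estimate that bounds $\|A_N\|_2$ by the Host--Kra seminorm $\|f_i\|_{U^{\ell+1}}$, and the iteration crucially uses the fact that the differences $s_{n+h} - s_n$ are themselves arithmetically structured (linear or polynomial in $n$). For a sequence $(s_n)$ with no assumed arithmetic structure, the differences $s_{n+h}-s_n$ can be arbitrary, so the iterated Van der Corput strategy produces uncontrollable terms and does not identify a characteristic factor. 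To close this gap one would need a genuinely ``soft'' argument showing that hypothesis (2) itself already forces $(s_n)$ to interact compatibly with the seminorm structure of every ergodic system, and I expect this to be the central difficulty any proof attempt must confront.
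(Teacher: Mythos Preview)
The statement is labelled a \emph{Conjecture} in the paper and no proof is offered there; your diagnosis that the implication $(2)\Rightarrow(1)$ (equivalently $(3)\Rightarrow(1)$) is the open step, and that the obstruction is the absence of a characteristic-factor result for averages along an arbitrary sequence $(s_n)$, matches the paper's own assessment exactly.

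The paper does, however, record two partial results that you omit. For $\ell=1$ the conjecture is settled by the spectral theorem, since $1$-step nilsystems are rotations on compact abelian groups and these suffice to test single mean convergence. More substantively, when the range $\{s_n : n\in\N\}$ has positive upper density, the paper notes that $(3)\Rightarrow(1)$ follows from the nilsequence decomposition theorem of Bergelson--Host--Kra (Theorem~1.9 of \cite{BHK}). That route differs from the characteristic-factor strategy you sketch: instead of trying to prove that $Z_\ell$ controls the averages along $(s_n)$ for a general system (which, as you correctly observe, would require a van der Corput iteration that collapses for unstructured $(s_n)$), one decomposes the relevant multiple correlation sequences as an $\ell$-step nilsequence plus a term that is negligible in uniform density; hypothesis~(3) disposes of the nilsequence piece, and the positive-density assumption on $\{s_n\}$ forces the remainder to average to zero along $(s_n)$. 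For zero-density sequences neither approach is known to work, and this is precisely why the paper leaves the statement as a conjecture.

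A minor correction to your $(3)\Rightarrow(2)$ sketch: there is no need to pass to the conjugate lattice $g\Gamma g^{-1}$. For $x=g\Gamma$ one has $a^{s_n}g = g(g^{-1}ag)^{s_n}$, so $f(a^{s_n}x)=\tilde f\bigl((g^{-1}ag)^{s_n}\Gamma\bigr)$ where $\tilde f(h\Gamma):=f(gh\Gamma)$ is a continuous function on the \emph{same} nilmanifold $G/\Gamma$; then (3) applies directly with the conjugated group element $g^{-1}ag$.
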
 We are mainly interested in knowing if the third (or second) condition implies the first.
In the case that the set $S=\{s_n, n\in \N\}$ has positive upper density, this implication follows immediately from the nilsequence
decomposition result of Bergelson, Host, and Kra  (Theorem 1.9 in  \cite{BHK}).

For general sequences $(s_n)$, using the spectral theorem for unitary operators, we can  verify Conjecture~\ref{C:iff}  for $\ell=1$.  It is possible to see that if
Conjecture~\ref{C:iff} is true then Conjecture~\ref{C:kl} is also true
(and hence Conjecture~\ref{C:everyl}).

\bigskip
{\bf Notation:} The following notation will be used throughout the
article: $Tf=f\circ T$, $e(t)=e^{2\pi i t}$.

\bigskip
{\bf Acknowledgment.}
The authors wish to thank the Mathematical Sciences Research Institute in Berkeley for providing partial support to complete work on the present article during its special semester program on Ergodic Theory and Additive Combinatorics.

\section{Single mean convergence}\label{sectionmean}
In this section we shall prove Theorem~\ref{sec:main-result-1}.  This will help us illustrate the main ideas
behind
the more complicated arguments we shall use in the proof of Theorem~\ref{T:multiple} and of Theorem~\ref{T:main}
(which both extend Theorem~\ref{sec:main-result-1}).

We shall prove our theorem in its equivalent formulation of
Theorem~\ref{sec:main-result-2}.

Let $B$ be a fixed set of positive integers (possibly empty), and let $\alpha$ be a fixed irrational number.  It is sufficient to find a sequence $(s_n)$ satisfying the following two conditions:
\begin{enumerate}
\item[(s1)] For every $b\in B$, the sequence $\Big(\frac1N
  \sum_{n=1}^N e(s_n^b\alpha)\Big)$ diverges.
\end{enumerate}

\begin{enumerate}
\item[(s2)] For every $g\in \N\setminus B$ and every $\beta\in\R$, the
  sequence $\Big(\frac1N \sum_{n=1}^N e(s_n^g\beta)\Big)$ converges.
\end{enumerate}
The rest of this section will be devoted to the construction of a
sequence $(s_n)$ that satisfies conditions (s1) and (s2).

\subsection{Definition of the sequence  $(s_n)$}\label{SS:s_n1}
 We denote
\begin{equation}\label{E:I}
  I_+=\left\{ x\in \mathbb T\mid  \, \cos(2\pi x)\ge \sqrt 2/2\right\}\quad\text{and}\quad
  I_-=\left\{ x\in \mathbb T\mid  \, \cos(2\pi x)\le -\sqrt 2/2\right\}.
\end{equation}
These are intervals of length $1/4$ on the torus.  The sequence
$(s_n)$ consists of the elements of a set $S$, taken in increasing
order, that is defined as follows:
\begin{equation}\label{E:S}
  S=\bigcup_{j\ge 1}\left\{ n\in\N\mid [2^{2j-1}\leq n< 2^{2j}
    ,\, \
    n^{b_j}\alpha\in I_+]\text{ or }[2^{2j}\leq  n< 2^{2j+1},\, n^{b_j}\alpha\in I_- ]\right\}
\end{equation}
for some appropriately chosen sequence $(b_j)$ of elements of $B$.  We shall construct a sequence $(b_j)$ so that:

$\bullet$ Every element of $B$ appears infinitely often in the
sequence $(b_j)$. This guarantees  that condition (s1) holds.

$\bullet$ The first appearance of elements of $B$ in the sequence
$(b_j)$ happens late enough (with respect to $j$) to guarantee that
certain equidistribution properties are satisfied.  All unspecified
elements $b_j$ will be set to be equal to some fixed element of
$B$. This will enable us to verify condition (s2).

Let us now state explicitly the properties that the sequence $(b_j)$
is going to satisfy:
\begin{enumerate}
\item[(b1)]  $b_j\in B$, and all elements of $B$ appear infinitely
  often in the sequence $(b_j)$.

\item[(b2)] We have
$$  \lim_{j\to\infty}\sup_{N>2^{2j-1}}\left|\frac1N\sum_{n=1}^N \indicator_{I_\pm}(n^{b_j}\alpha) -\frac14
\right|=0.$$

\item[(b3)] For every nonzero $l\in\Z$, $\beta \in \R$, and $g\in G$, we have
%%$$
%%\lim_{j\to\infty}\sup_{N\geq2^{2j-1}}\ \sup_{\beta\in\R}\
%% \sup_{g\in\N,\,g<b_j}
%% \left|\frac1N %%\sum_{n=1}^{N} e(l n^{b_j}\alpha + n^g\beta)\right|=0.
%%$$
$$
\lim_{j\to\infty}\sup_{N\geq2^{2j-1}} \left|\frac1N \sum_{n=1}^{N} e(l
  n^{b_j}\alpha + n^g\beta)\right|=0.
$$
  \end{enumerate}
  In the next subsection we construct such a sequence  $(b_j)$.

  \subsection{Construction of the sequence $(b_j)$}\label{SS:b_j1}
  The following lemma will be essential for our construction:
  \begin{lemma}\label{uniform}
    Let $b,g\in\N$ with $b> g$ and $\alpha\in\R\setminus
    \mathbb{Q}$. Then
    \begin{equation*}
     \lim_{N\to\infty} \frac1N \sum_{n=1}^{N} \indicator_{I_\pm}(n^b\alpha)
      = \frac14, \text{ and}
    \end{equation*}
    \begin{equation*}
     \lim_{N\to\infty} \sup_{\beta\in\R} \Big|\frac1N \sum_{n=1}^{N} e( n^b\alpha + n^g\beta)\Big| = 0.
    \end{equation*}
  \end{lemma}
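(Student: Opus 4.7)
My plan is to derive both statements from the classical Weyl approach to exponential sums over polynomials whose leading coefficient is irrational.

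For the first statement, I would simply invoke Weyl's equidistribution theorem: since $\alpha$ is irrational, the polynomial sequence $(n^b \alpha)_{n\ge 1}$ is equidistributed modulo $1$. Each of $I_+, I_-$ is an interval (hence Jordan measurable) of length $1/4$, so equidistribution yields
\begin{equation*}
\frac{1}{N}\sum_{n=1}^N \indicator_{I_\pm}(n^b\alpha)\;\longrightarrow\;|I_\pm|=\tfrac14.
\end{equation*}

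For the second statement the substantive issue is the uniformity in $\beta$. Writing $p_\beta(n)=n^b\alpha+n^g\beta$, I would apply Weyl differencing (or the Hilbert space Van der Corput trick in the spectral picture) $b-1$ times. Each differencing by a shift $h$ sends a polynomial of degree $d$ to one of degree $d-1$ whose leading coefficient gets multiplied by $dh$. Since $g<b$, after $b-1$ iterations with shifts $h_1,\ldots,h_{b-1}$, the resulting polynomial in $n$ is affine with slope $b!\,h_1\cdots h_{b-1}\,\alpha$. The $\beta$-dependence is pushed entirely into the constant term, which disappears when we take the modulus of the exponential sum. Consequently, the Weyl differencing inequality produces a bound of the form
\begin{equation*}
\Big|\frac{1}{N}\sum_{n=1}^N e(p_\beta(n))\Big|^{2^{b-1}}
\ll_{b}\;\frac{1}{H^{b-1}}\sum_{|h_1|,\ldots,|h_{b-1}|\le H}
\min\!\left(1,\frac{1}{N\,\|b!\,h_1\cdots h_{b-1}\,\alpha\|}\right)+O\!\Big(\frac{H}{N}\Big),
\end{equation*}
in which the right-hand side is visibly independent of $\beta$.

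To finish, one invokes Weyl's classical observation that, because $\alpha\in\R\setminus\Q$, the quantity $\|k\alpha\|$ is rarely too small, so the multiple sum on the right is $o(H^{b-1})$ as $N\to\infty$; choosing $H=H(N)\to\infty$ sufficiently slowly then forces the entire right-hand side to $0$. Taking the supremum over $\beta$ on the left gives the claimed uniform decay. The only real obstacle is the bookkeeping in the differencing step: one must track how each coefficient transforms in order to confirm that the final linear slope depends only on $\alpha$ and the shifts and not on $\beta$. Once that verification is in place, the estimate is a standard instance of Weyl's inequality for polynomial exponential sums with irrational leading coefficient.
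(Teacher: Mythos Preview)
Your proposal is correct and follows essentially the same route as the paper: Weyl's equidistribution theorem for the first part, and van der Corput differencing to eliminate the $\beta$-dependence for the second. The only cosmetic difference is that the paper applies van der Corput's inequality $g$ times rather than $b-1$ times---after $g$ differencing steps the $n^g\beta$ term is already a constant, and the remaining inner sum is a polynomial exponential sum of degree $b-g\ge 1$ with irrational leading coefficient, which Weyl's theorem handles directly; your choice to continue all the way down to the linear level is equally valid.
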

  \begin{proof} The first part follows from Weyl's equidistribution
    theorem. The second part is a direct consequence of van der
    Corput's classical inequality (see e.g. \cite{KN74}). Applying it $g$ times, we can
    estimate  the trigonometric sums  by a quantity that converges to $0$ as $N\to \infty$ uniformly in $\beta$. This completes the proof.
  \end{proof}
  Note that, for each fixed $b$ only finitely many $g$'s are involved
  in the lemma. Therefore, the convergence is also uniform in $g$.

\begin{proposition}\label{construction}
  There exists a sequence $(b_j)$ that satisfies conditions (b1),
  (b2), and (b3) of Section~\ref{SS:s_n1}.
\end{proposition}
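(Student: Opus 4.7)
The plan is a diagonal construction: at stage $j$ we choose $b_j\in B$ for which Lemma~\ref{uniform} already yields errors below $1/j$ at scales $N\ge 2^{2j-1}$, both in the indicator average of (b2) and in the exponential sum of (b3). First, assuming $B\neq\emptyset$ (otherwise the claim is vacuous), fix an enumeration $(\beta_m)_{m\ge 1}$ of $B$ in which every element of $B$ appears infinitely often.

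For each $m\ge 1$, Lemma~\ref{uniform} applied with $b=\beta_m$ produces a threshold $M_m$ such that whenever $N\ge M_m$ one has simultaneously
\[
\Big|\frac1N\sum_{n=1}^N \indicator_{I_{\pm}}(n^{\beta_m}\alpha)-\frac14\Big|<\frac1m
\]
and
\[
\sup_{\beta\in\R}\Big|\frac1N\sum_{n=1}^N e\bigl(l\,n^{\beta_m}\alpha+n^{g}\beta\bigr)\Big|<\frac1m
\]
for every pair of integers $(l,g)$ with $1\le|l|\le m$ and $g\in G$ satisfying $g\le m$ and $g<\beta_m$. Such an $M_m$ exists because Lemma~\ref{uniform} delivers the bound uniformly in $\beta$ for each fixed $(l,g)$, and only finitely many pairs are active at stage $m$ (this is exactly the point made in the remark following the lemma). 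Then set $k(j):=\max\{m\ge 1:M_m\le 2^{2j-1}\}$, which is nondecreasing and satisfies $k(j)\to\infty$, and finally define $b_j:=\beta_{k(j)}$.

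Condition (b1) is then immediate: since $k(j)\to\infty$ and every element of $B$ appears infinitely often in $(\beta_m)$, the same holds for $(b_j)$. For (b2) and (b3), fix the relevant parameters; once $j$ is large enough that $k(j)$ exceeds all the bounds imposed on $|l|$ and $g$, the inequality $N\ge 2^{2j-1}\ge M_{k(j)}$ forces the supremum in the corresponding condition to be at most $1/k(j)$, which tends to $0$. The main obstacle is essentially bookkeeping: one has to reconcile the threshold $2^{2j-1}$ (dictated by the block structure of \eqref{E:S}), the uniformity in $\beta$ coming from van der Corput (for each fixed $(l,g)$), and the infinite-repetition requirement of (b1). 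The construction above navigates this by letting $k(j)$ grow slowly enough that $M_{k(j)}\le 2^{2j-1}$ always holds, while still tending to infinity.
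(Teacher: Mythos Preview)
There is a genuine gap in your verification of (b3). Lemma~\ref{uniform} only controls $\frac1N\sum_{n=1}^N e(ln^{b}\alpha+n^g\beta)$ when $b>g$, and accordingly your threshold $M_m$ covers only those $g\in G$ with $g<\beta_m$. But (b3) must hold for \emph{every} $g\in G$, and once (b1) is in force, each element of $B$ smaller than $g$ recurs as $b_j$ for infinitely many $j$; for those indices your bound says nothing. The paper closes this case with a separate argument: the finitely many $b\in B$ with $b<g$ are treated one at a time via Weyl's theorem (here the convergence is for fixed $\beta$ only, not uniform in $\beta$, which is all (b3) asks), and the result is then automatically uniform over this finite set of $b$'s. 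Without this second ingredient (b3) is not established.

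There is also a slip in the verification of (b1). Your $k(j)=\max\{m:M_m\le2^{2j-1}\}$ is nondecreasing and tends to infinity, but it may skip indices: since $2^{2j-1}$ quadruples at each step, several thresholds $M_m$ can be crossed simultaneously. Thus $(\beta_{k(j)})_j$ samples $(\beta_m)_m$ along an unspecified index set, and the hypothesis that each element of $B$ occurs infinitely often in $(\beta_m)$ does not transfer. (Take $B=\{1,2\}$ with $\beta_m$ alternating and the $M_m$ arranged so that $k$ lands only on odd indices.) This is easy to repair---let $k$ increase by at most one per step, or lay down the elements of $B$ in explicit blocks as the paper does---but as written the inference is unjustified. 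A related minor point: for the maximum defining $k(j)$ to exist you need $M_m\to\infty$, which should be arranged explicitly.
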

% (When we say ``It is possible to choose the sequence $(b_j)$'', it
% means that we take also into account the conditions that have been
% imposed to the sequence before.)
\begin{proof}
  Using Lemma~\ref{uniform}, we get that for every $b\in\N$, $l\in \N$, and $\varepsilon>0$, there
  exists $J=J(b,l,\varepsilon)\geq1$ that satisfies
  \begin{equation}\label{E:11}
    \sup_{N\geq2^{2J-1}}\  \left|\frac1N \sum_{n=1}^{N} \indicator_{I_\pm}(n^b\alpha) -\frac14
    \right|\leq\varepsilon;
  \end{equation}
  and such that
  \begin{equation}\label{E:22}
    \sup_{N\geq2^{2J-1}}\ \sup_{\beta\in\R}\ \sup_{g\in\N,\,g<b} \left|\frac1N \sum_{n=1}^{N} e(l n^b\alpha + n^g\beta)\right|\leq\varepsilon.
  \end{equation}
  Furthermore, we can assume that $J=J(b,l,\varepsilon)$ is increasing
  with respect to the variables $b,l$, and decreasing with respect to
  the variable $\varepsilon$.

  %% We can assume that $J(b,l,varepsilon)$ is increasing as a
  %% function of $b$, increasing as a function of %%
  We write $B=\{a_t, t\in \N\}$ where $a_1<a_2<\ldots$. We
  construct a sequence $(b_j)$ that satisfies the following conditions: (i)
  every integer $a_t$ appears infinitely often in the range of
  $(b_j)$, (ii) for $t\geq 2$, the first appearance of $a_t$ in $(b_j)$ happens at a
  time $j$ that is greater than $J(a_t,t,1/t)$, and (iii) all values of $b_j$
  that are left unspecified are set to be equal to $a_1$. Notice that
  condition (ii) guarantees that \eqref{E:11} and \eqref{E:22} hold
  for $b=b_j$ and $J=j$.

  The explicit construction goes as follows:

  Define $J_1=J(a_1,1,1)$ and $b_{J_1}=a_1$.

  Define $J_2=\max\{J(a_2,2,1/2),J_1+2\}$ and
  $b_{J_2}=a_2,\,b_{J_2+1}=a_1$.

  Inductively, we define $J_t=\max\{J(a_t,t,1/t),J_{t-1}+t\}$

  \hfill {and
    $b_{J_t}=a_t,\,b_{J_t+1}=a_{t-1},\ldots,b_{J_t+t-1}=a_{1}$.}

  We claim that this sequence $(b_j)$ has the advertised properties.  First, we note
  that every integer $a_t$ appears infinitely many times in the range
  of $(b_j)$, so condition (b1) holds.  Now let $\beta\in \R$, $g\in
  G$, and $l\in \N$.  Choose $j$ large enough so that $J_t\leq
  j<J_{t+1}$ for some $t\geq l$. Then, by construction, we have
  $b_j\in\{a_1,a_2,\ldots,a_t\}$. If $N\geq 2^{2j-1}$ then $N\geq
  2^{2J_t-1}$, and $J_t\geq J(a_k,l,1/t)$ for all $k$ between $1$ and
  $t$. It follows from \eqref{E:11} that
$$
\sup_{N\geq2^{2j-1}} \left|\frac1N \sum_{n=1}^{N}
  \indicator_{I_\pm}(n^{b_j}\alpha) -\frac14 \right|\leq\frac1t,
$$
and so condition (b2) holds.  Furthermore, it follows from
\eqref{E:22} that for every $b_j$ greater than $g$ we have
\begin{equation}\label{E:b_j}
  \sup_{N\geq2^{2j-1}} \left|\frac1N \sum_{n=1}^{N} e(l n^{b_j}\alpha + n^g\beta)\right|\leq\frac1t.
\end{equation}
It remains to deal with those $b_j$ that are less than $g$. Since
there are only finitely many values of the sequence $(b_j)$ that are
less than $g$, by Weyl's equidistribution theorem we have
      $$
      \lim_{N\to\infty}\sup_{j\in \N, b_j<g}\left| \frac1N \sum_{n=1}^{N} e(l
        n^{b_j}\alpha + n^g\beta)\right| = 0.$$ Combining this with \eqref{E:b_j} gives that
      condition (b3) also holds, completing the proof.
    \end{proof}

    %In the next subsection we complete the proof of Theorem~\ref{sec:main-result-2}.

 \subsection{The sequence $(s_n)$ satisfies conditions (s1) and (s2)}
 The goal of this section is to complete the proof of Theorem~\ref{sec:main-result-2} by  proving the following proposition:
 \begin{proposition}\label{P:mean}
   Suppose that the sequence $(b_j)$ satisfies conditions (b1),(b2),
   and (b3) of Section~\ref{SS:b_j1}.  Then the sequence $(s_n)$
   defined by \eqref{E:S} satisfies conditions (s1) and (s2).
 \end{proposition}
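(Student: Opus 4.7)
Fix $b\in B$ and, using (b1), pick an infinite set of indices $j$ with $b_j=b$. Set $N_j^-=|S\cap[1,2^{2j-1})|$, $N_j^0=|S\cap[1,2^{2j})|$, $N_j^+=|S\cap[1,2^{2j+1})|$. By the definition of $S$ every $m\in S\cap[2^{2j-1},2^{2j})$ satisfies $\cos(2\pi m^b\alpha)\ge\sqrt{2}/2$ and every $m\in S\cap[2^{2j},2^{2j+1})$ satisfies $\cos(2\pi m^b\alpha)\le-\sqrt{2}/2$, so
\[
\mathrm{Re}\sum_{n=N_j^-+1}^{N_j^0}e(s_n^b\alpha)\ge\tfrac{\sqrt{2}}{2}(N_j^0-N_j^-),\qquad \mathrm{Re}\sum_{n=N_j^0+1}^{N_j^+}e(s_n^b\alpha)\le-\tfrac{\sqrt{2}}{2}(N_j^+-N_j^0).
\]
The density estimate $|S\cap[1,M]|=M/4+o(M)$ (proved as part of (s2) below) yields $(N_j^0-N_j^-)/N_j^0\to 1/2$ and $(N_j^+-N_j^0)/N_j^+\to 1/2$. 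If $a_N:=\frac1N\sum_{n=1}^N e(s_n^b\alpha)$ converged to some $L$, then the Cesaro identity $N'a_{N'}-Na_N=(N'-N)L+o(N')$ would force both block averages (divided by $N_j^0-N_j^-$ and by $N_j^+-N_j^0$) to tend to $L$, giving simultaneously $\mathrm{Re}(L)\ge\sqrt{2}/2$ and $\mathrm{Re}(L)\le-\sqrt{2}/2$, a contradiction.

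\textbf{Plan for (s2).} Fix $g\in\N\setminus B$ and $\beta\in\R$. I aim to prove the two asymptotics
\[
|S\cap[1,M]|=\tfrac{M}{4}+o(M),\qquad \sum_{m\in S\cap[1,M]}e(m^g\beta)=\tfrac14\sum_{m=1}^M e(m^g\beta)+o(M).
\]
Since $L_g(\beta):=\lim_{M\to\infty}\tfrac1M\sum_{m=1}^M e(m^g\beta)$ exists for every $\beta\in\R$ (zero for irrational $\beta$ by Weyl; the mean over one period for rational $\beta$), dividing the two asymptotics gives $\frac{1}{|S\cap[1,M]|}\sum_{m\in S\cap[1,M]}e(m^g\beta)\to L_g(\beta)$. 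Every positive integer $N$ equals $|S\cap[1,M]|$ for some $M$, so (s2) follows.

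\textbf{Proof of the two asymptotics.} Decompose $S\cap[1,M]=\bigsqcup_{k\le j}\bigl(S\cap[2^{2k-1},2^{2k+1})\cap[1,M]\bigr)$ for $M\in[2^{2j-1},2^{2j+1})$; on each block the characteristic function $\indicator_S(m)$ equals $\indicator_{I_\pm}(m^{b_k}\alpha)$. For each $\varepsilon>0$, take trigonometric-polynomial majorants and minorants $P_\varepsilon^\pm=\sum_{|l|\le L_\varepsilon}c_l^{\pm,\varepsilon}e(l\,\cdot)$ of $\indicator_{I_\pm}$ with $P_\varepsilon^-\le\indicator_{I_\pm}\le P_\varepsilon^+$ and $\int(P_\varepsilon^+-P_\varepsilon^-)\,d\lambda\le\varepsilon$ (a standard Fej\'er-kernel construction), set $Q_\varepsilon=(P_\varepsilon^++P_\varepsilon^-)/2$, and bound $|\indicator_{I_\pm}-Q_\varepsilon|\le R_\varepsilon:=(P_\varepsilon^+-P_\varepsilon^-)/2$. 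The $l=0$ part of $Q_\varepsilon(m^{b_k}\alpha)e(m^g\beta)$ contributes the main term $\bigl(\tfrac14+O(\varepsilon)\bigr)\sum_{m=1}^M e(m^g\beta)$; the nonzero Fourier modes of $Q_\varepsilon$ and $R_\varepsilon$ are annihilated uniformly on the block, the former by (b3) applied with the given $(g,\beta)$ and the latter by (b3) with $\beta=0$. Summing the block errors as $\sum_{k\le k_0}O(2^{2k})+\sum_{k_0<k\le j}\varepsilon\cdot 2^{2k}=o(M)+O(\varepsilon M)$ (choose $k_0$ first, then let $j\to\infty$, then $\varepsilon\to 0$) yields both asymptotics.

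\textbf{Main obstacle.} The delicate point is the uniformity required in (s2): the cutoff $M$ may land strictly inside the $j$-th dyadic block, so (b3) must control partial sums $\frac1N\sum_{n=1}^N e(ln^{b_j}\alpha+n^g\beta)$ at arbitrary $N\in[2^{2j-1},2^{2j+1})$, not merely at block endpoints. This is precisely why (b3) was formulated with a supremum over $N\ge 2^{2j-1}$. A secondary subtlety is that $R_\varepsilon$ must be taken to be a trigonometric polynomial, not just an $L^1$-small function, so that its nonconstant Fourier modes can themselves be handled uniformly via (b3).
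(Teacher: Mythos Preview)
Your proof is correct and follows essentially the same route as the paper: density of $S$, trigonometric-polynomial approximation of $\indicator_{I_\pm}$ to reduce (s2) to (b3), and for (s1) the sign pattern of $\cos(2\pi m^b\alpha)$ on consecutive dyadic blocks combined with density. The only cosmetic differences are that the paper derives the density of $S$ from (b2) via a dyadic concatenation lemma (which avoids the need for $\N\setminus B\neq\emptyset$ when invoking (b3) with $\beta=0$), and its approximation argument (Lemma~\ref{L:mean1}) uses an $L^1$ approximation sandwiched by continuous functions rather than your majorant/minorant pair---both standard and interchangeable.
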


 First we show that the set $S$ in \eqref{E:S} has positive density.
 \begin{lemma}\label{density}
   Suppose that the sequence $(b_j)$ satisfies condition (b2) of
   Section~\ref{SS:s_n1}. Then the set $S$ in \eqref{E:S} has density
   $1/4$.
 \end{lemma}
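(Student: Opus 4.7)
The plan is a direct dyadic counting argument. I would decompose $\mathbb{N}$ into the half-blocks $B_j^+ := [2^{2j-1}, 2^{2j})$ and $B_j^- := [2^{2j}, 2^{2j+1})$, and use condition (b2) to show that each half-block of sufficiently large scale contributes roughly one quarter of its length to $S$.

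First I would fix $\varepsilon > 0$. By (b2), there exists $j_0 = j_0(\varepsilon)$ such that for every $j \ge j_0$ and every $N \ge 2^{2j-1}$,
\[
\left| \sum_{n=1}^{N} \indicator_{I_{\pm}}(n^{b_j}\alpha) - \frac{N}{4} \right| \le \varepsilon N.
\]
Applying this at the two endpoints of $B_j^{\pm}$ and subtracting, one obtains for each $j \ge j_0$,
\[
|S \cap B_j^+| = \tfrac{1}{4} |B_j^+| + O(\varepsilon \cdot 2^{2j}), \qquad |S \cap B_j^-| = \tfrac{1}{4} |B_j^-| + O(\varepsilon \cdot 2^{2j+1}),
\]
with the same bound, mutatis mutandis, for a half-block truncated at some intermediate value $N$.

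Then for large $N$, let $j$ be the unique index with $N \in B_j^+ \cup B_j^-$. The count $|S \cap [1,N]|$ splits as a bounded contribution from scales $j' < j_0$ (at most a constant $C(j_0)$), plus the quarter-length contributions from scales $j_0 \le j' < j$, plus the truncated contribution of the half-block containing $N$. Since the half-block lengths form a geometric series whose partial sum up to the block containing $N$ equals $N + O(1)$, I obtain
\[
\left| \frac{|S \cap [1,N]|}{N} - \frac{1}{4} \right| \le C \varepsilon + \frac{C(j_0)}{N}
\]
for an absolute constant $C$. Letting $N \to \infty$ and then $\varepsilon \to 0$ yields density $1/4$.

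The only mildly delicate point is that (b2) controls partial sums starting at $n = 1$, so to extract the contribution of a single half-block one must difference two such partial sums; the error is then measured against the larger endpoint rather than the block length itself. But the two are of the same order (within a factor of $2$), so summing the resulting errors over $j_0 \le j' < j$ produces a geometric sum of size $O(\varepsilon \cdot 2^{2j}) = O(\varepsilon N)$, which is absorbed into the bound above. Beyond this, the argument is routine bookkeeping, with the essential content already concentrated in condition (b2).
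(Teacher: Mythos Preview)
Your proof is correct and is essentially the same argument as the paper's: the paper cites Lemma~\ref{L:Wierdl} in the Appendix, whose proof carries out precisely the dyadic block decomposition and geometric-series error summation that you have written out by hand. The only difference is packaging---the paper isolates the dyadic concatenation step as a standalone lemma (applied to $u_n = \indicator_S(n) - 1/4$), whereas you perform it in place.
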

 \begin{proof}
   This is a direct  consequence of Lemma~\ref{L:Wierdl} in the
   Appendix.
   %% applied to the families of complex numbers given by
   %%$u^+_{n,j}=\indicator_{I_+}(n^{b_j})-\frac{1}{4}$ and
   %%$u^-_{n,j}=\indicator_{I_-}(n^{b_j})-\frac{1}{4}$.
 \end{proof}

 Since $S$ has positive density, condition (s1) is equivalent to
 \begin{equation}\label{M'1}
   \text{For every $b\in B$, the sequence}\ \Big(\frac1N
   \sum_{n=1}^{N} \indicator_S(n)e(n^b\alpha)\Big)\ \text{diverges,}
 \end{equation}
 and condition (s2) is equivalent to
 \begin{equation}\label{M'2}
   \text{For every $g\in \N\setminus B$ and $\beta\in\R$,}\text{ the sequence } \Big(\frac1N
   \sum_{n=1}^{N} \indicator_S(n)e(n^g\beta)\Big)\ \text{converges.}
 \end{equation}
 We first show that the conditions imposed on the sequence $(b_j)$
 guarantee that condition \eqref{M'1} (and as a result condition (s1))
 is satisfied by the set $S$.
 \begin{proposition}\label{P:1}
   Suppose that the sequence $(b_j)$ satisfies conditions (b1) and (b2)
   of Section~\ref{SS:s_n1}. Then the sequence $(s_n)$ defined by  \eqref{E:S} satisfies
   condition (s1).
   %% As a consequence, for every $b\in B$ the sequence $(s_n^b)$ is
   %% \emph{bad}
   %% for mean convergence.
 \end{proposition}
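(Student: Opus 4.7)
The plan is to exhibit divergence of $A_N := \frac1N \sum_{n=1}^N \indicator_S(n) e(n^b\alpha)$ along a triple of dyadic subsequences; by the equivalence \eqref{M'1}, which rests on Lemma~\ref{density} giving $S$ density $1/4$, this yields condition (s1). The construction \eqref{E:S} is tailored so that, at each scale $j$ with $b_j=b$, the real part of the summand has a definite sign that flips between the consecutive dyadic blocks $[2^{2j-1},2^{2j})$ and $[2^{2j},2^{2j+1})$; condition (b1) produces an infinite set $J\subseteq \N$ of such scales, and condition (b2) will quantify the density of contributing $n$.

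Fix $b \in B$ and $j \in J$. From \eqref{E:S}, on $[2^{2j-1}, 2^{2j})$ the indicator $\indicator_S(n)=1$ forces $n^b\alpha \in I_+$, so $\mathrm{Re}\,e(n^b\alpha) = \cos(2\pi n^b\alpha) \geq \sqrt{2}/2$; on $[2^{2j}, 2^{2j+1})$ it forces $\mathrm{Re}\,e(n^b\alpha) \leq -\sqrt{2}/2$. Applying (b2) at the three successive thresholds $N = 2^{2j-1}, 2^{2j}, 2^{2j+1}$ and subtracting shows that the numbers of contributing $n$ in the two blocks are $2^{2j-3} + o(2^{2j})$ and $2^{2j-2} + o(2^{2j})$ respectively, with errors uniform in $j \in J$ as $j\to\infty$.

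Letting $P_j$ and $M_j$ denote the corresponding block sums, these estimates yield $\mathrm{Re}(P_j) \geq \tfrac{\sqrt{2}}{16}\, 2^{2j} + o(2^{2j})$ and $\mathrm{Re}(M_j) \leq -\tfrac{\sqrt{2}}{8}\, 2^{2j} + o(2^{2j})$. Feeding these into the telescoping identities $2^{2j} A_{2^{2j}} = 2^{2j-1} A_{2^{2j-1}} + P_j$ and $2^{2j+1} A_{2^{2j+1}} = 2^{2j} A_{2^{2j}} + M_j$ and dividing through, one obtains
\begin{equation*}
\mathrm{Re}(A_{2^{2j}}) - \tfrac12\mathrm{Re}(A_{2^{2j-1}}) \geq \tfrac{\sqrt{2}}{16} + o(1), \qquad \mathrm{Re}(A_{2^{2j+1}}) - \tfrac12\mathrm{Re}(A_{2^{2j}}) \leq -\tfrac{\sqrt{2}}{16} + o(1)
\end{equation*}
as $j \to \infty$ through $J$. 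If $A_N$ converged to some limit $L\in\mathbb{C}$, both left-hand sides would tend to $\tfrac12\mathrm{Re}(L)$, forcing the incompatible inequalities $\tfrac{\sqrt{2}}{16} \leq \tfrac12\mathrm{Re}(L) \leq -\tfrac{\sqrt{2}}{16}$.

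The only delicate point is bridging the gap between (b2), which controls densities on initial segments $[1,N]$, and the dyadic block estimates the argument actually uses. This is resolved by applying (b2) at three successive thresholds and differencing; the uniformity built into (b2) over $N > 2^{2j-1}$ is exactly what keeps the block estimates uniform in $j \in J$ and lets the limits of the two left-hand sides above be compared at the same value $\tfrac12\mathrm{Re}(L)$.
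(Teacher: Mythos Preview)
Your proof is correct and follows essentially the same approach as the paper: both exploit the sign flip of $\cos(2\pi n^b\alpha)$ across the consecutive dyadic blocks $[2^{2j-1},2^{2j})$ and $[2^{2j},2^{2j+1})$ whenever $b_j=b$, and both use (b2) (equivalently, Weyl equidistribution for the fixed exponent $b$) to count the contributing $n$. The only cosmetic difference is that the paper compares $\mathrm{Re}(A_{2^{2j}})$ directly with $\mathrm{Re}(A_{2^{2j+1}})$ and bounds their difference away from zero, whereas you use a three-scale telescope and derive two incompatible inequalities for $\tfrac12\mathrm{Re}(L)$; the underlying estimate is the same.
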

 \begin{proof}
   Fix $b\in B$. By condition $(b1)$ there are arbitrarily large
   values of $j$ for which $b_j=b$. For any such $j$ we have
   \begin{gather*}
     \frac1{2^{2j}}\sum_{n=1}^{2^{2j}}\indicator_S(n)\cos(2\pi n^b\alpha)-\frac1{2^{2j+1}}\sum_{n=1}^{2^{2j+1}}\indicator_S(n)\cos(2\pi n^b\alpha)=\\
     \frac1{2^{2j+1}}\sum_{n=1}^{2^{2j}}\indicator_S(n)\cos(2\pi
     n^b\alpha)-\frac1{2^{2j+1}}\sum_{n=2^{2j}+1}^{2^{2j+1}}\indicator_S(n)\cos(2\pi
     n^b\alpha)
     \geq\\
     \frac1{2^{2j+1}}\sum_{n=1}^{2^{2j-1}}\indicator_S(n)
     (-1)+\frac1{2^{2j+1}}\sum_{n=2^{2j-1}+1}^{2^{2j}}\indicator_S(n)
     (\sqrt2/2)-\frac1{2^{2j+1}}\sum_{n=2^{2j}+1}^{2^{2j+1}}\indicator_S(n)
     (-\sqrt2/2).
   \end{gather*}
   Using condition $(b2)$ and Lemma \ref{density}, we see that, for
   large $j$, the last quantity is near
   $-\frac1{16}+\frac{\sqrt2}{32}+\frac{\sqrt2}{16}$, which is
   positive. This shows that (\ref{M'1}) holds. Since the set $S$ has
   positive density (Lemma~\ref{density}), condition (s1) is also
   satisfied.
 \end{proof}
 Next we show that the conditions imposed on the sequence $(b_j)$
 guarantee that condition \eqref{M'2} (and hence condition (s2)) is satisfied. We first need two lemmas.
 \begin{lemma}\label{L:mean}
   Let $l,m$ be two nonzero integers, $\alpha\in \R\setminus
   \mathbb{Q}$, and suppose that the sequence $(b_j)$ satisfies
   condition (b3) of Section~\ref{SS:s_n1}. Let us define a sequence
   $(e_n)$ by
   \begin{equation*}
     e_n=
     \begin{cases}
       e(l n^{b_j}\alpha), \quad n\in [2^{2j-1}, 2^{2j}); \\
       e(m n^{b_j}\alpha), \quad n\in [2^{2j}, 2^{2j+1}).
     \end{cases}
   \end{equation*}
   Then for every $\beta\in \mathbb R$ and  $g\in \mathbb
   N\setminus B$, we have
   \begin{equation*}
    \lim_{N\to\infty} \frac1N \sum_{n=1}^{N} e_n \ \! e(n^g\beta) = 0 .   \end{equation*}
 \end{lemma}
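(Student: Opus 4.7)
The plan is to control the sum $\sum_{n=1}^N e_n\,e(n^g\beta)$ block by block, using condition (b3) on each dyadic window. On $[2^{2j-1},2^{2j})$ the summand equals $e(ln^{b_j}\alpha+n^g\beta)$ and on $[2^{2j},2^{2j+1})$ it equals $e(mn^{b_j}\alpha+n^g\beta)$; both are precisely the exponential sums whose normalized partial sums (b3) forces to tend to $0$ uniformly in $N\ge 2^{2j-1}$ as $j\to\infty$. The hypothesis $g\in\N\setminus B$ is what is needed to invoke (b3), and $l,m$ are already nonzero integers as required.

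Fix $\delta>0$. Applying (b3) once with coefficient $l$ and once with $m$, choose $J$ such that for every $j\ge J$ and every $M\ge 2^{2j-1}$,
\[
\Bigl|\sum_{n=1}^M e(ln^{b_j}\alpha+n^g\beta)\Bigr|\le \delta M \quad\text{and}\quad \Bigl|\sum_{n=1}^M e(mn^{b_j}\alpha+n^g\beta)\Bigr|\le \delta M.
\]
For $N\ge 2^{2J-1}$, let $j^{*}$ be the unique index with $N\in[2^{2j^{*}-1},2^{2j^{*}+1})$, and decompose $\sum_{n=1}^N e_n\,e(n^g\beta)$ into (i) a fixed prefix over $n<2^{2J-1}$, (ii) the contributions of the full sub-blocks $[2^{2j-1},2^{2j})$ and $[2^{2j},2^{2j+1})$ for $J\le j<j^{*}$, and (iii) a terminal partial sub-block at level $j^{*}$.

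Each full sub-block contribution at level $j\ge J$ is a difference of two partial sums of the form controlled above, evaluated at endpoints $\ge 2^{2j-1}$, and is therefore bounded by a constant times $\delta\cdot 2^{2j+1}$; summing the geometric series over $J\le j<j^{*}$ gives a bound of order $\delta\cdot 2^{2j^{*}}=O(\delta N)$. The terminal partial sub-block at level $j^{*}$ is again a difference of two such partial sums, each bounded by $\delta N$ since $N\ge 2^{2j^{*}-1}$, so it contributes $O(\delta N)$. The prefix (i) is bounded by $2^{2J-1}$, a constant independent of $N$. Combining, $\bigl|\sum_{n=1}^N e_n\,e(n^g\beta)\bigr|\le C\delta N+O_\delta(1)$, whence $\limsup_{N\to\infty}\bigl|\tfrac{1}{N}\sum_{n=1}^N e_n\,e(n^g\beta)\bigr|\le C\delta$; letting $\delta\to 0$ proves the lemma. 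The only real care needed is the bookkeeping at block boundaries and at the terminal partial block, which (b3) handles directly because it controls partial sums up to \emph{any} $N\ge 2^{2j-1}$, not merely at the dyadic endpoints; no deeper obstacle is expected.
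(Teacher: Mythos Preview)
Your proof is correct and follows essentially the same approach as the paper. The paper's own proof is a two-line affair: it invokes condition~(b3) with $k=l$ and $k=m$ and then cites Lemma~\ref{L:Wierdl} from the Appendix, which is precisely the general dyadic-block concatenation lemma whose content you have reproved inline (split into prefix, full dyadic sub-blocks, and terminal partial block, with each sub-block contribution written as a difference of two controlled partial sums). So the substance is identical; the only difference is organizational.
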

 \begin{proof}
   Fix $\beta\in \mathbb R$ and $g\in \mathbb N\setminus B$.  By
   condition (b3) we have for every nonzero integer $k$ that
$$
\lim_{j\to\infty}\sup_{N>2^{2j-1}}\left|\frac1N \sum_{n=1}^{N} e(k
  n^{b_j}\alpha + n^g\beta)\right|=0.
$$
Applying this for $k=l$, and $k=m$, and using Lemma~\ref{L:Wierdl} in
the Appendix we  get the advertised result.
\end{proof}
Given a sequence $(b_j)$ of positive integers and functions
$\phi,\psi\colon \T \to \mathbb{C}$, define the sequence
\begin{equation}\label{alternance}
  f_n(\phi,\psi)=
  \begin{cases}
    \phi(n^{b_j}\alpha), \quad n\in [2^{2j-1}, 2^{2j}); \\
    \psi(n^{b_j}\alpha), \quad n\in [2^{2j}, 2^{2j+1}).
  \end{cases}
\end{equation}
\begin{lemma}\label{L:mean1}
  Let $\phi_0={\bf 1}_{I_+}-1/4, \psi_0={\bf 1}_{I_-}-1/4$, $\alpha\in \R\setminus \mathbb{Q}$, and suppose that the
  sequence $(b_j)$ satisfies condition (b3) of
  Section~\ref{SS:s_n1}.
  %%Let us define a sequence
  %%$(f_n(\phi_0,\psi_0))$ by
  %%\begin{equation*}
  %%  f_n(\phi_0,\psi_0)=
  %%  \begin{cases}
  %%    \phi_0(n^{b_j}\alpha), \quad n\in [2^{2j-1}, 2^{2j}); \\
  %%   \psi_0(n^{b_j}\alpha), \quad n\in [2^{2j}, 2^{2j+1}).
   %% \end{cases}
  %%\end{equation*}
  Then for every $\beta\in \mathbb R$ and $g\in \mathbb
  N\setminus B$, we have
  \begin{equation*}
   \lim_{N\to\infty} \frac1N \sum_{n=1}^{N} f_n(\phi_0,\psi_0) \ \! e(n^g\beta) = 0.
  \end{equation*}
\end{lemma}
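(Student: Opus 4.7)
The plan is to Fourier-expand $\phi_0$ and $\psi_0$, handle the low-frequency part by reducing each Fourier mode to Lemma~\ref{L:mean}, and dispatch the high-frequency tail via Cauchy--Schwarz combined with an $L^2$ equidistribution estimate from Lemma~\ref{L:Wierdl}. Since $|I_+|=|I_-|=1/4$, both $\phi_0$ and $\psi_0$ lie in $L^2(\T)$ with zero mean, so they admit Fourier expansions $\phi_0=\sum_{l\neq 0}a_l\,e(l\cdot)$ and $\psi_0=\sum_{l\neq 0}b_l\,e(l\cdot)$ converging in $L^2$. Given $\varepsilon>0$, I would fix $L$ so that the truncations $P_L=\sum_{0<|l|\leq L}a_l\,e(l\cdot)$ and $Q_L=\sum_{0<|l|\leq L}b_l\,e(l\cdot)$ satisfy $\|\phi_0-P_L\|_2,\|\psi_0-Q_L\|_2<\varepsilon$, and split
\[
\frac{1}{N}\sum_{n=1}^N f_n(\phi_0,\psi_0)\,e(n^g\beta)=\frac{1}{N}\sum_n f_n(P_L,Q_L)\,e(n^g\beta)+\frac{1}{N}\sum_n f_n(\phi_0-P_L,\psi_0-Q_L)\,e(n^g\beta).
\]

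For the main (low-frequency) term, the idea is to write $f_n(P_L,Q_L)$ as a finite linear combination of the sequences appearing in Lemma~\ref{L:mean}. Since $P_L$ and $Q_L$ both have mean zero, $\sum_l a_l=\sum_m b_m=0$, so the transportation-style system $\sum_m c_{l,m}=a_l,\ \sum_l c_{l,m}=b_m$ (with indices $0<|l|,|m|\leq L$) is consistent; an explicit solution is $c_{l,m}=a_l\,\delta_{l,m}+\delta_{l,l_0}(b_m-a_m)$ for any fixed $l_0\neq 0$. A direct check then gives
\[
f_n(P_L,Q_L)=\sum_{0<|l|,|m|\leq L}c_{l,m}\,e_n^{(l,m)},
\]
where $e_n^{(l,m)}$ denotes the sequence $e_n$ of Lemma~\ref{L:mean} built from the pair $(l,m)$. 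Applying Lemma~\ref{L:mean} to each of the finitely many pairs $(l,m)$ and summing produces $\frac{1}{N}\sum_n f_n(P_L,Q_L)\,e(n^g\beta)\to 0$.

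For the tail term, since $\indicator_A\indicator_B=0$ (with $A=\bigcup_j[2^{2j-1},2^{2j})$ and $B=\bigcup_j[2^{2j},2^{2j+1})$), we have $|f_n(\phi_0-P_L,\psi_0-Q_L)|^2=\indicator_A(n)|(\phi_0-P_L)(n^{b_j}\alpha)|^2+\indicator_B(n)|(\psi_0-Q_L)(n^{b_j}\alpha)|^2$, and Cauchy--Schwarz yields
\[
\Bigl|\frac{1}{N}\sum_n f_n(\phi_0-P_L,\psi_0-Q_L)\,e(n^g\beta)\Bigr|^{2}\leq\frac{1}{N}\sum_{n\in A\cap[1,N]}|(\phi_0-P_L)(n^{b_j}\alpha)|^2+\frac{1}{N}\sum_{n\in B\cap[1,N]}|(\psi_0-Q_L)(n^{b_j}\alpha)|^2.
\]
Applying Lemma~\ref{L:Wierdl} with the bounded nonnegative functions $|\phi_0-P_L|^2$ and $|\psi_0-Q_L|^2$ shows that the right-hand side is eventually bounded by $\|\phi_0-P_L\|_2^2+\|\psi_0-Q_L\|_2^2\leq 2\varepsilon^2$. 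Combining, $\limsup_N\bigl|\frac{1}{N}\sum_n f_n(\phi_0,\psi_0)\,e(n^g\beta)\bigr|\leq\sqrt{2}\,\varepsilon$, and letting $\varepsilon\to 0$ completes the argument.

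The main obstacle is the tail estimate: it calls on Lemma~\ref{L:Wierdl} with arbitrary bounded measurable test functions rather than with exponentials. If the appendix lemma is only stated for characters or for Riemann-integrable integrands, a preliminary $L^1(\T)$-approximation of $|\phi_0-P_L|^2$ and $|\psi_0-Q_L|^2$ by continuous functions is needed, followed by an application of the lemma to each such approximation, absorbing the approximation error. The main-term decomposition, by contrast, is purely algebraic and becomes routine once the mean-zero observation is in place.
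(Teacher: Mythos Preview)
Your overall strategy---approximate by trigonometric polynomials, handle the main term via Lemma~\ref{L:mean}, and control the tail by an equidistribution argument---matches the paper's. Your tail argument via Cauchy--Schwarz and $L^2$ approximation is a reasonable alternative to the paper's $L^1$/sandwiching route, and you correctly flag that the equidistribution needed for the Riemann-integrable test functions $|\phi_0-P_L|^2$, $|\psi_0-Q_L|^2$ must itself be extracted from~(b3) (take $\beta=0$) together with a continuous sandwich, which is exactly what the paper does for its own tail.

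The main-term argument, however, contains a genuine error. You assert that because $P_L$ and $Q_L$ have mean zero, $\sum_{0<|l|\le L}a_l=\sum_{0<|m|\le L}b_m=0$, and use this to solve the transportation system $\sum_m c_{l,m}=a_l$, $\sum_l c_{l,m}=b_m$. But ``mean zero'' says $\widehat{P_L}(0)=0$, not $P_L(0)=\sum_{l\ne 0}a_l=0$. In fact $P_L(0)\to\phi_0(0)=3/4$ and $Q_L(0)\to\psi_0(0)=-1/4$, so the consistency condition $\sum_l a_l=\sum_m b_m$ of your transportation system \emph{fails} for all large $L$, and your explicit solution $c_{l,m}=a_l\delta_{l,m}+\delta_{l,l_0}(b_m-a_m)$ then violates the constraint $\sum_m c_{l_0,m}=a_{l_0}$. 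Hence the claimed identity $f_n(P_L,Q_L)=\sum_{l,m}c_{l,m}\,e_n^{(l,m)}$ is false. More structurally, \emph{any} finite decomposition $f_n(P_L,Q_L)=\sum_k c_k\,e_n^{(l_k,m_k)}$ with $l_k,m_k\ne 0$ forces $P_L(0)=\sum_k c_k=Q_L(0)$, so no such decomposition exists here.

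The fix is to abandon the transportation altogether. Write $\phi=\sum_{l\ne 0}a_l\,e(l\cdot)$, $\psi=\sum_{m\ne 0}b_m\,e(m\cdot)$ and apply Lemma~\ref{L:Wierdl} \emph{directly} to the family
\[
u_{n,\,2j-1}=\sum_{l\ne 0}a_l\,e\bigl(l\,n^{b_j}\alpha+n^g\beta\bigr),\qquad
u_{n,\,2j}=\sum_{m\ne 0}b_m\,e\bigl(m\,n^{b_j}\alpha+n^g\beta\bigr),
\]
whose dyadic concatenation is precisely $f_n(\phi,\psi)\,e(n^g\beta)$. Condition~(b3) gives $\sup_{N\ge 2^{2j-1}}\bigl|\tfrac1N\sum_n u_{n,2j-1}\bigr|\to 0$ (and likewise for even indices) term by term, since the sums over $l$ and $m$ are finite. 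This is what the paper means by ``By Lemma~\ref{L:mean}, the result is true if in place of $\phi_0$ and $\psi_0$ we use trigonometric polynomials'': Lemma~\ref{L:Wierdl} already allows completely unrelated sequences on different dyadic blocks, so no matching of coefficients between the $A$- and $B$-blocks is needed.
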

\begin{proof}
  By Lemma \ref{L:mean}, the result is true if in place of $\phi_0$ and $\psi_0$ we use
  trigonometric polynomials. To complete the proof, we use a standard
  approximation argument. We give the details for the convenience of the reader.

    It suffices to
  show that for every $\varepsilon>0$ there exist trigonometric
  polynomials $\phi$ and $\psi$ with zero integral such that
  \begin{equation}\label{E:f_0}
    \limsup_{N\to\infty}\frac1N\sum_{n=1}^N\left|f_n\left(\phi_0,\psi_0\right)-f_n(\phi,\psi)\right|\leq\e.
  \end{equation}
%%  where $f_n(\phi,\psi)$ denotes the sequence $f_n$ defined by
%%  (\ref{alternance}).
  We can approximate in $L^1(\T)$ the function $\phi_0$ by a
  trigonometric polynomial. After composing with a translation this
  trigonometric polynomial will approximate the function $\psi_0$ as
  well.  So there exist two trigonometric polynomials $\phi$ and
  $\psi$, with zero integral, such that $
  \int_{\T}\left|\phi-\phi_0\right|=\int_{\T}\left|\psi-\psi_0\right|=\theta\leq\varepsilon.
  $ Notice that
  \begin{equation}
    \left|f_n\left(\phi_0,\psi_0\right)-f_n(\phi,\psi)\right|=
    f_n\left(\left|\phi-\phi_0\right|,\left|\psi-\psi_0\right|\right).
  \end{equation}
  So in order to establish \eqref{E:f_0} it suffices to show that
  \begin{equation}\label{tot}
    \limsup_{N\to\infty}\frac1N\sum_{n=1}^Nf_n(|\phi-\phi_0|,|\psi-\psi_0|)\leq \e.
  \end{equation}
   Let $\Phi_0=|\phi-\phi_0|$ and
  $\Psi_0=|\psi-\psi_0|$. Since both $\Phi_0$ and $\Psi_0$ are Riemann
  integrable and have integral $\theta$, the following
  holds: For every $\delta>0$ there exist four continuous
  functions $\phi_1,\phi_2,\psi_1,\psi_2$, with zero integral, such
  that
$$
\phi_1+\theta-\delta\leq\Phi_0\leq\phi_2+\theta+\delta\quad\text{and}\quad
\psi_1+\theta-\delta\leq\Psi_0\leq\psi_2+\theta+\delta.
$$
It follows that
\begin{equation}\label{E:delta}
  f_n(\phi_1,\psi_1) +\theta-\delta\leq f_n(\Phi_0,\Psi_0)\leq f_n(\phi_2,\psi_2) +\theta+\delta.
\end{equation}
Moreover, since
\begin{equation}\label{tard}
  \lim_{N\to\infty}\frac1N\sum_{n=1}^Nf_n(\phi,\psi)=0
\end{equation}
for every trigonometric polynomials $\phi$ and $\psi$ with zero
integral, by uniform approximation, this remains true if $\phi$ and
$\psi$ are continuous functions on the torus, with zero integral.
Thus, we deduce from \eqref{E:delta} and \eqref{tard} (applied to
$\phi=\phi_i$, $\psi=\psi_i$ for $i=1,2$), and the fact that $\delta$
was arbitrary, that
$$
\lim_{N\to\infty}\frac1N\sum_{n=1}^Nf_n(\Phi_0,\Psi_0)=\theta\leq \e.
$$
Hence, \eqref{tot} is established, completing the proof.
\end{proof}

\begin{proposition}\label{P:2}
  Suppose that the sequence $(b_j)$ satisfies conditions (b2) and (b3)
  of Section~\ref{SS:s_n1}. Then the sequence $(s_n)$ defined by
  \eqref{E:S} satisfies condition (s2).
\end{proposition}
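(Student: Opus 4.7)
The plan is to reduce condition (s2) to a statement about weighted averages against $\indicator_S$, decompose $\indicator_S$ into a constant part plus a mean-zero part, and apply Lemma~\ref{L:mean1} to the mean-zero part.

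First, since Lemma~\ref{density} shows that $S$ has positive density $1/4$, condition (s2) is equivalent to condition \eqref{M'2}, namely that for every $g\in\N\setminus B$ and every $\beta\in\R$, the averages
\[
\frac1N\sum_{n=1}^N \indicator_S(n)\, e(n^g\beta)
\]
converge as $N\to\infty$. So it suffices to prove this convergence.

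Second, by the very definition of $S$ in \eqref{E:S}, the indicator $\indicator_S(n)$ has exactly the alternating block structure of \eqref{alternance}: for $n\in[2^{2j-1},2^{2j})$ it equals $\indicator_{I_+}(n^{b_j}\alpha)$, and for $n\in[2^{2j},2^{2j+1})$ it equals $\indicator_{I_-}(n^{b_j}\alpha)$. In the notation of \eqref{alternance}, this says $\indicator_S(n)=f_n(\indicator_{I_+},\indicator_{I_-})$. Writing $\indicator_{I_\pm}=\phi_0+1/4$ and $\indicator_{I_-}=\psi_0+1/4$, where $\phi_0=\indicator_{I_+}-1/4$ and $\psi_0=\indicator_{I_-}-1/4$ are the zero-integral functions appearing in Lemma~\ref{L:mean1}, and using that $f_n$ depends linearly on its two arguments, we obtain
\[
\indicator_S(n)=f_n(\phi_0,\psi_0)+\tfrac14.
\]
Hence
\[
\frac1N\sum_{n=1}^N \indicator_S(n)\,e(n^g\beta)
=\frac1N\sum_{n=1}^N f_n(\phi_0,\psi_0)\,e(n^g\beta)
+\frac{1}{4N}\sum_{n=1}^N e(n^g\beta).
\]

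Third, I would handle each summand separately. The first sum tends to $0$ as $N\to\infty$ by a direct application of Lemma~\ref{L:mean1}, which is exactly tailored to this setup (and where hypotheses (b2) and (b3) are used). For the second sum, I would invoke the classical fact that for any $g\in\N$ and any $\beta\in\R$, the average $\frac1N\sum_{n=1}^N e(n^g\beta)$ converges: if $\beta$ is rational, the sequence $e(n^g\beta)$ is periodic, so Ces\`aro averages converge; if $\beta$ is irrational, Weyl's equidistribution theorem gives convergence to $0$. Adding the two limits yields the existence of $\lim_N \frac1N\sum_{n=1}^N\indicator_S(n)e(n^g\beta)$, which completes the verification of (s2).

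There is no real obstacle; the work is essentially already packaged in Lemma~\ref{L:mean1} and in the density computation of Lemma~\ref{density}. The only point requiring care is the observation that decomposing $\indicator_S$ as $f_n(\phi_0,\psi_0)+1/4$ is valid because the blocks defining $f_n$ partition $\N$ and the constant parts of $\indicator_{I_+}$ and $\indicator_{I_-}$ coincide, so the leftover contribution is the single constant $1/4$ rather than an alternating constant.
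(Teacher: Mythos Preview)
Your proof is correct and follows essentially the same route as the paper: decompose $\indicator_S(n)=f_n(\phi_0,\psi_0)+1/4$, kill the first piece with Lemma~\ref{L:mean1}, and note that the remaining averages $\frac1N\sum_{n=1}^N e(n^g\beta)$ converge by Weyl's theorem. One small imprecision: Lemma~\ref{L:mean1} itself only uses hypothesis (b3); hypothesis (b2) enters through Lemma~\ref{density} when you pass between (s2) and \eqref{M'2}.
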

\begin{proof}
  We apply Lemma \ref{L:mean1}. We get
  for every $\beta\in \mathbb R$ and  $g\in \mathbb N\setminus
  B$ that
  \begin{equation*}
    \lim_{N\to \infty} \frac1N \sum_{n=1}^{N} (\indicator_S(n)-1/4)\ \! e(n^g\beta) =0.
  \end{equation*}
 Hence, condition \eqref{M'2} is satisfied. Since the set $S$ has
  positive density (Lemma~\ref{density}), condition (s2) is also
  satisfied.
\end{proof}

% Proposition~\ref{P:mean} follows directly from Propositions~\ref{P:1} and \ref{P:2}.  OR
Combining Propositions~\ref{P:1} and \ref{P:2} we deduce Proposition~\ref{P:mean}, completing the proof of Theorem~A'.

\section{Multiple mean convergence}
In this section we shall prove Theorem~\ref{T:multiple}. The argument
is similar to the one used to prove Theorem~\ref{sec:main-result-1},
but there are some extra complications since our analysis relies on
some more intricate multiple convergence results. To avoid repetition, we do
not give details of proofs that can be immediately extracted using
arguments of the previous section.
\subsection{A reduction}
We need one preliminary
result that  was proved in \cite{FLW1} in the special case
where the polynomial $p$ is a monomial. A very similar argument gives
the following more general result:
\begin{lemma}\label{L:FLW}
  Suppose that the sequence $(s_n)$ is good for
  $\ell$-convergence. Then for every polynomial $p\in\R[t]$ with $\deg p\leq
  \ell$,   the sequence  $\big(\frac{1}{N}\sum_{n=1}^N e(p(s_n))\big)$ converges.
\end{lemma}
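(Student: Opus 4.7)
The strategy is to realize the scalar sequence $\frac{1}{N}\sum_{n=1}^N e(p(s_n))$ as the space-integral of an $\ell$-fold ergodic average, so that convergence will follow directly from the hypothesis applied to an appropriately chosen system. Concretely, I plan to construct a measure preserving system $(X,\mathcal{B},\mu,T)$ and bounded measurable functions $f_1,\dots,f_\ell$ with the property
\begin{equation*}
\int_X \prod_{j=1}^\ell f_j(T^{js_n}x) \, d\mu(x) \;=\; e(p(s_n)) \quad \text{for every } n.
\end{equation*}
A natural candidate for $X$ is an $\ell$-step nilsystem, for instance the affine skew product $T(y_1,\ldots,y_\ell) = (y_1+\gamma_1,\, y_2+y_1+\gamma_2,\,\ldots,\, y_\ell+y_{\ell-1}+\gamma_\ell)$ on $\T^\ell$ equipped with Lebesgue measure, where the shift parameters $\gamma_1,\ldots,\gamma_\ell\in\R$ are chosen in accordance with the coefficients of $p$. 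With this choice, the iterates $T^m$ have coordinates that are polynomials in $m$ of degrees $1,2,\ldots,\ell$; taking each $f_j$ to be a character of $\T^\ell$, the product $\prod_j f_j(T^{js_n}x)$ becomes a single complex exponential whose exponent is affine in $x$ with coefficients that are polynomials in $s_n$. Integration in $x$ annihilates exactly the terms whose $x$-dependence does not vanish identically in $n$, and so the construction reduces to choosing the character frequencies (and the $\gamma_i$'s) so that the $x$-dependent part of the exponent vanishes identically in the variable $s_n$ while the scalar remainder equals $p(s_n)$.

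Once this construction is in place, the proof concludes immediately: by the $\ell$-convergence hypothesis, the averages $A_N(x) := \frac{1}{N}\sum_{n=1}^N \prod_{j=1}^\ell f_j(T^{js_n}x)$ converge in $L^2(\mu)$, which implies the convergence of the inner product $\langle A_N, 1\rangle = \int A_N \, d\mu$; but by construction this integral is exactly $\frac{1}{N}\sum_{n=1}^N e(p(s_n))$.

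The main obstacle lies entirely in the construction step: one must verify that the linear-algebra constraints on the character frequencies and shift parameters (forcing the $x$-linear exponent to vanish identically in $s_n$, and simultaneously making the $x$-independent part equal $p(s_n)$) admit a solution for every polynomial $p$ of degree $\leq \ell$. The monomial case $p(t)=c t^\ell$ was carried out in \cite{FLW1}; the polynomial case is a parallel computation, where the extra degrees of freedom afforded by the coefficients $c_0, c_1, \ldots, c_\ell$ of $p$ are matched by the extra freedom in the shift parameters $\gamma_i$ (and, if need be, by passing to a slightly higher-dimensional nilsystem). This is precisely why the lemma's statement has $\deg p \leq \ell$ rather than just handling a top-degree monomial.
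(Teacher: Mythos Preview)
Your approach---realize $e(p(s_n))$ via an $\ell$-fold product of characters on an affine torus skew-product---is exactly the paper's; the paper does not give a full proof but illustrates the construction with the $\ell=2$ example immediately following the lemma. One correction to your plan: in the natural construction the product $\prod_j f_j(T^{js_n}x)$ comes out as $g(x)\cdot e(p(s_n))$ for a \emph{nontrivial} character $g$ (in the paper's example $g(t)=e(k(t_3-t_2))$), so pairing with $1$ gives $0$ rather than $e(p(s_n))$; instead pair with $\bar g$, or simply note (as the paper does) that $L^2$-convergence of $\frac1N\sum_n c_n\, g$ for a fixed nonzero $g\in L^2(\mu)$ already forces convergence of the scalar sequence $\frac1N\sum_n c_n$.
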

\begin{example}
Let us illustrate how one proves Lemma~\ref{L:FLW} in the case where $\ell=2$.
Suppose that the sequence $(s_n)$ is good for
$2$-convergence. Let $p(t)= 2\alpha t^2 +\beta t +\gamma$ for some $\alpha,\beta,\gamma \in \R$.
We define the transformation
$R\colon\mathbb{T}^3\to\mathbb{T}^3$ by
$$
  R(t_1,t_2,t_3)=(t_1+\alpha,t_2+2t_1+\alpha, t_3+\beta),
  $$
and  for $k\in\Z$ the functions
$$
f_1(t_1,t_2,t_3)=e(k(-2t_2+t_3)), \quad f_2(t_1,t_2,t_3)=e(kt_2).
$$
Then $$
  R^n(t_1,t_2,t_3)=(t_1+n\alpha,t_2+2nt_1+n^2\alpha, t_3+n\beta).  $$
As a consequence, the averages
$$ \frac{1}{N}\sum_{n=1}^N R^{s_n}f_1\cdot R^{2 s_n} f_2=
e(k(t_3-t_2))\cdot \frac{1}{N} \sum_{n=1}^N e(k(2\alpha s_n^2+\beta s_n))  $$
converge as $N\to\infty$.
%%It follows that
%%the sequence $(p(s_n))$ has an asymptotic distribution $\mod  1$.
\end{example}
Using the previous lemma, we can  deduce Theorem~\ref{T:multiple} from
the following result that we shall prove next:
\begin{proposition}\label{mean'}
  Let $B$ be an arbitrary set of positive integers, $\ell\in \N$, and $\alpha\in \mathbb
  R\setminus\mathbb Q$. Then there exists an increasing sequence $(s_n)$
  of integers such that
  \begin{enumerate}
  \item[(s1)] For every $b\in B$, the sequence $\Big(\frac1N
    \sum_{n=1}^N e((s_n^{\ell b}+s_n^b)\alpha)\Big) $ diverges.
  \item[(s2)] For every $g\in \N\setminus B$, system
    $(X,\mathcal{B},\mu,T)$, and functions $f_1,\ldots, f_{\ell}\in
    L^\infty(\mu)$,\\ \notag the sequence $\Big(\frac1N \sum_{n=1}^N
    T^{s_n^g} f_1\cdot T^{2s_n^g}f_2\cdot\ldots\cdot T^{\ell
      s_n^g}f_{\ell}\Big)$ converges in $L^2(\mu)$.
  \end{enumerate}
\end{proposition}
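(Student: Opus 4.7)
The plan is to adapt the architecture of Section~\ref{sectionmean} with three modifications. First, redefine the set $S$ using the coupled polynomial phase $(n^{\ell b_j}+n^{b_j})\alpha$ in place of $n^{b_j}\alpha$:
$$
S=\bigcup_{j\ge 1}\bigl\{n:2^{2j-1}\le n<2^{2j},\,(n^{\ell b_j}+n^{b_j})\alpha\in I_+\bigr\}\cup\bigl\{n:2^{2j}\le n<2^{2j+1},\,(n^{\ell b_j}+n^{b_j})\alpha\in I_-\bigr\}.
$$
Since $n^{\ell b}+n^b=p(n^b)$ for $p(t)=t^\ell+t$, a divergent single-exponential sum along $(s_n^b)$ weighted by this phase is exactly what Lemma~\ref{L:FLW} needs to convert into failure of $\ell$-convergence for $(s_n^b)$. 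Second, strengthen (b3) to the multilinear condition (b3$'$): for every nonzero $l\in\Z$, every $g\in\N\setminus B$, every system $(X,\mathcal{B},\mu,T)$, and all bounded $f_1,\ldots,f_\ell$,
$$
\lim_{j\to\infty}\sup_{N\ge 2^{2j-1}}\Bigl\|\frac1N\sum_{n=1}^N e\bigl(l(n^{\ell b_j}+n^{b_j})\alpha\bigr)\,T^{n^g}f_1\cdots T^{\ell n^g}f_\ell\Bigr\|_{L^2(\mu)}=0.
$$
The density lemma (Lemma~\ref{density}) carries over verbatim since $(n^{\ell b}+n^b)\alpha$ equidistributes modulo~$1$ by Weyl.

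To construct $(b_j)$, the key preliminary is a multiple-average analog of Lemma~\ref{uniform}: for fixed $b>g$, irrational~$\alpha$, and any system with bounded functions $f_1,\ldots,f_\ell$, the $L^2$-norm of the averages in (b3$'$) tends to~$0$ as $N\to\infty$. I would prove this by iterating the Hilbert space Van der Corput trick roughly $g$ times to eliminate the $T$-shifts; what remains is a genuine exponential sum in $n$ whose phase is a polynomial of degree $\ell b-g\ge 1$ with irrational leading coefficient, to which Weyl applies. Once this is in hand, the construction of $(b_j)$ replicates Proposition~\ref{construction}: enumerate $B=\{a_t\}$ and schedule the first appearance of $a_t$ at a position $j\ge J(a_t,t,1/t)$ large enough to secure the required $\e$-control. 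Condition (s1) is then proved exactly as in Proposition~\ref{P:1}: on blocks where $b_j=b$ the averages of $\indicator_S(n)\cos(2\pi(n^{\ell b}+n^b)\alpha)$ oscillate between values bounded away from zero, yielding divergence of $\frac1N\sum\indicator_S(n)e((n^{\ell b}+n^b)\alpha)$ and hence, via Lemma~\ref{L:FLW}, failure of $\ell$-convergence for $(s_n^b)$.

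For condition (s2), decompose $\indicator_S(n)=\tfrac14+(\indicator_S(n)-\tfrac14)$. The constant part multiplies the average $\frac1N\sum_n T^{n^g}f_1\cdots T^{\ell n^g}f_\ell$, which converges in $L^2(\mu)$ by the Host--Kra/Leibman theorem on polynomial multiple averages. For the oscillatory part, I would replay the approximation scheme of Lemma~\ref{L:mean1}: approximate $\indicator_{I_\pm}-\tfrac14$ in $L^1(\T)$ by zero-mean trigonometric polynomials, reducing matters to showing that for nonzero integers $l,m$, an alternating-phase sequence of the form $e(l(n^{\ell b_j}+n^{b_j})\alpha)$ on $[2^{2j-1},2^{2j})$ and $e(m(n^{\ell b_j}+n^{b_j})\alpha)$ on $[2^{2j},2^{2j+1})$ averages to zero against $\prod_k T^{kn^g}f_k$ in $L^2(\mu)$; each half-block contribution is controlled by (b3$'$) and the two halves are stitched via Lemma~\ref{L:Wierdl}. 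The main obstacle is establishing (b3$'$) with enough uniformity to feed the inductive construction of $(b_j)$: unlike Lemma~\ref{uniform}, where the parameter~$\beta$ lives in~$\R$, here the parameter is an entire measure-preserving system, so one must either factor through a universal object (a Host--Kra nilfactor of bounded complexity) or track the Van der Corput iterates explicitly to keep the error control uniform over systems.
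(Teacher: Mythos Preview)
Your architecture is exactly the paper's: the set $S$ built from the phase $(n^{\ell b_j}+n^{b_j})\alpha$, the multilinear condition you call (b3$'$), the construction of $(b_j)$ along the lines of Proposition~\ref{construction}, and the verification of (s1) and (s2) via the analogs of Propositions~\ref{P:1} and~\ref{P:2} together with the Host--Kra/Leibman theorem. Two points need correction.

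First, the van der Corput count is wrong. Eliminating the family $\{n^g,2n^g,\ldots,\ell n^g\}$ via PET requires a number of iterations depending on both $\ell$ and $g$ and in general much larger than $g$ (already for $\ell=g=2$ the Appendix example needs eleven steps). Hence the condition $b>g$ does \emph{not} ensure that the iterated-difference phase retains positive degree after the $T$-shifts are exhausted; the paper's Lemma~\ref{L:2'} states the correct threshold as $b\ge d(\ell,g)$. Second, and more substantively, you present nilfactors and explicit van der Corput tracking as \emph{alternative} routes to the needed uniformity, whereas the paper uses both ingredients, for different ranges of $b$. The PET/van der Corput argument (Lemma~\ref{L:2'}) supplies the system-uniform estimate only once $b\ge d(\ell,g)$, and this is what allows the thresholds $J(a_t,t,1/t)$ to be chosen independently of any system. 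But to verify (b3$'$) for a \emph{fixed} system when $b_j<d(\ell,g)$ --- finitely many values for each $g$ --- one still needs a per-system convergence-to-zero result, namely Lemma~\ref{L:1'} (quoted from \cite{FLW2} and proved via the nil-factor machinery). It plays exactly the role that Weyl's theorem plays at the end of the proof of Proposition~\ref{construction}. Your sketch does not isolate this lemma, and without it the verification of (b3$'$) for small $b_j$ is incomplete.
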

The rest of this section will be devoted to the construction of a
sequence $(s_n)$ that satisfies conditions (s1) and (s2).
\subsection{Definition of the sequence $(s_n)$}\label{SS:s_n2}
Let $\alpha$ be any irrational number
and  $I_+$ and $I_-$ be the intervals defined in \eqref{E:I}.  The
sequence $(s_n)$ consists of the elements of a set $S$, taken in
increasing order, that is defined as follows:
\begin{align}\label{E:S'}
  S=\bigcup_{j\ge 1}\big\{ n\in\N\mid [2^{2j-1}\leq n< 2^{2j} ,\, \
  &(n^{\ell b_j}+n^{b_j})\alpha\in I_+] \ \text{ or }\\
  \notag & [2^{2j}\leq n< 2^{2j+1},\, (n^{\ell b_j}+n^{b_j})\alpha\in
  I_- ]\big\}
\end{align}
where the sequence $(b_j)$ satisfies the following conditions:
\begin{enumerate}
\item[(b1)]   $b_j\in B$, and all elements of $B$ appear infinitely
  often in the sequence $(b_j)$.

\item[(b2)] We have
$$  \lim_{j\to\infty}\sup_{N>2^{2j-1}}\left|\frac1N\sum_{n=1}^N \indicator_{I_\pm}(n^{b_j}\alpha) -\frac14
\right|=0.$$

\item[(b3)] For every system $(X,\mathcal{B},\mu,T)$, functions
  $f_1,\ldots, f_{\ell}\in L^\infty(\mu)$, and nonzero $k\in\Z$, we have
  %% \begin{align}
$$\lim_{j\to\infty}\sup_{N\geq2^{2j-1}}\norm{ \frac1N  \sum_{n=1}^N
  e( k(n^{\ell b_j}+n^{b_j})\alpha)\cdot T^{n^g} f_1\cdot
  T^{2n^g}f_2\cdot\ldots\cdot T^{\ell n^g}f_\ell}{L^2(\mu)}{}=0.
$$
  \end{enumerate}
  In the next subsection we construct such a sequence  $(b_j)$.

\subsection{Construction of the sequence $(b_j)$}\label{SS:b_j2}
We need two preliminary results. The first was proved in \cite{FLW2} using the 
machinery of nil-factors:
\begin{lemma}\label{L:1'}
  Let $(X,\mathcal{B},\mu, T)$ be a system, $f_1,\ldots, f_\ell \in
  L^\infty(\mu)$ and $\alpha\in \R\setminus \mathbb{Q}$. If $b,g$ are
  distinct positive integers then
  \begin{align}\label{E:main}
    \lim_{N\to\infty}\frac{1}{N}\sum_{n=1}^N & e\big((n^{{\ell
        b}}+n^{b})\alpha\big) \cdot T^{n^{g}}f_1\cdot T^{2n^{g}}f_2\cdot  \ldots \cdot
    T^{\ell n^g}f_\ell=0
  \end{align}
  where the convergence takes place in $L^2(\mu)$.
\end{lemma}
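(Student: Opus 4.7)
The strategy is to reduce the lemma to an equidistribution statement on a product nilmanifold by combining the Host--Kra--Leibman theorem on characteristic factors for polynomial ergodic averages with Leibman's equidistribution theorem for polynomial orbits in nilmanifolds. I expect the main obstacle to be the polynomial-independence verification at the end, which must exploit the hypothesis $b\ne g$ even in the delicate case $\ell b=g$, where a simple degree count fails.

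The first step is to replace each $f_i$ by its projection on a Host--Kra factor $Z_k(X)$ of appropriate step, using the characteristic-factor theorem for multiple polynomial averages. Since these factors are inverse limits of nilsystems, a routine approximation argument reduces the problem to the case in which $(X,\mathcal{B},\mu,T)=(G/\Gamma,m,R_a)$ is itself a nilsystem and the $f_i$ are continuous functions on $G/\Gamma$.

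The second step is to encode the scalar weight as a nilsequence: because $n\mapsto(n^{\ell b}+n^b)\alpha$ is a polynomial sequence in $\T$, there exist an affine unipotent transformation $S$ on a torus $Y=\T^d$ (with $d=\ell b$), a point $y_0\in Y$, and a continuous character $\phi\colon Y\to\mathbb C$ such that $\phi(S^n y_0)=e((n^{\ell b}+n^b)\alpha)$ for every $n$. The average in \eqref{E:main} then becomes the value at $(x,y_0)$ of a multiparameter polynomial ergodic average on the product nilsystem $X\times Y$ with commuting generators $T\times\mathrm{id}$ and $\mathrm{id}\times S$, tested against $1\otimes \phi,\, f_1\otimes 1,\,\ldots,\,f_\ell\otimes 1$.

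The third step is to apply Leibman's equidistribution theorem to the polynomial orbit $n\mapsto(a^{n^g}\Gamma,\ldots,a^{\ell n^g}\Gamma,S^n y_0)$ on $(G/\Gamma)^\ell\times Y$ and compute the limit as an integral over its orbit closure. The hypothesis $b\ne g$ guarantees that the polynomial $n^{\ell b}+n^b$ cannot be absorbed modulo $\Z$ into any $\mathbb Q$-linear combination of $n^g,2n^g,\ldots,\ell n^g$ that would arise from horizontal characters on the $X$-side: when $\ell b\ne g$ the leading degrees differ outright, and when $\ell b=g$ the lower-order summand $n^b$, of degree $b<g$, fills the gap. Consequently every nontrivial horizontal character of $Y$ remains nontrivial on the combined polynomial orbit, so the orbit closure projects onto the full orbit closure of $\{S^n y_0\}$ in $Y$; integrating the character $\phi$ over this fiber yields zero, so the entire $L^2(\mu)$ limit vanishes. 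The only genuinely delicate point is this horizontal-character bookkeeping, which requires a careful enumeration of the possible linear relations among the involved polynomials modulo lower-degree terms.
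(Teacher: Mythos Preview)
The paper does not prove this lemma itself; it simply cites \cite{FLW2} and remarks that the argument there uses ``the machinery of nil-factors.'' Your proposal---reduction to nilsystems via Host--Kra characteristic factors, encoding the weight as a nilsequence, and then invoking Leibman's equidistribution theorem together with a horizontal-character analysis that exploits the irrationality of $\alpha$ and the presence of the lower-order term $n^b$ when $\ell b=g$---is exactly that machinery, so your approach coincides with the one the paper points to.

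One small wording issue: in Step~3 you speak of a ``$\mathbb Q$-linear combination of $n^g,2n^g,\ldots,\ell n^g$,'' but the obstructions coming from horizontal characters on the $X$-side are real multiples of $n^g$ (namely $n^g\sum_j j\chi_j(a)$), not rational ones. This does not affect the argument, since the irrationality of $k\alpha$ as the coefficient of $n^{\ell b}$ (or of $n^b$ in the case $\ell b=g$) already prevents the combined polynomial from being integer-valued regardless of the real coefficient on $n^g$; but you should phrase the independence check accordingly.
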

The second is the following result:
\begin{lemma}\label{L:2'}
  Let $\ell,g\in\N$. Then there exists $d(\ell,g)\in \N$ such that
  for every $d\geq d(\ell,g)$ and $\alpha\in \R\setminus \mathbb{Q}$,
  we have
  %% polynomial $p\in \R[t]$ with degree less than $k$,
  \begin{equation*}
    \lim_{N\to\infty} \sup_{S_1} \norm{ \frac1N  \sum_{n=1}^N  e\big( (n^{\ell d}+n^d)\alpha\big)\cdot
      T^{n^g} f_1\cdot  T^{2n^g}f_2\cdot\ldots\cdot
      T^{\ell n^g}f_\ell}{L^2(\mu)}{}=0,
  \end{equation*}
  where $S_1$ is the collection of all systems $(X,\mathcal{B},\mu,T)$
  and functions $f_1,\ldots, f_\ell\in L^\infty(\mu)$ with
  $\norm{f_i}{L^\infty(\mu)}{}\leq 1$ for $i=1,\ldots,\ell$.
\end{lemma}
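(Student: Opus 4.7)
The plan is to estimate the $L^2(\mu)$ norm by iterated application of the Hilbert space van der Corput (VdC) lemma combined with the Cauchy--Schwarz--Fubini trick, in the style of Bergelson's PET induction. This gradually reduces the multiple ergodic average to a scalar polynomial exponential sum in $n$, which can be controlled by Weyl's equidistribution theorem. The uniformity over systems and functions $f_j$ with $\|f_j\|_\infty\leq 1$ is automatic because, after sufficiently many reductions, the remaining estimate depends only on $\alpha$, $\ell$, $g$, and $d$.

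Write $a_n = e((n^{\ell d}+n^d)\alpha)\prod_{j=1}^{\ell}T^{jn^g}f_j$. By the Hilbert space VdC trick, for any $H$,
\[
\limsup_{N}\Bigl\|\tfrac1N\sum_{n=1}^N a_n\Bigr\|_{L^2(\mu)}^2 \leq \tfrac{1}{H}\sum_{h=1}^{H}\limsup_{N}\Bigl|\tfrac1N\sum_{n}\langle a_{n+h},a_n\rangle\Bigr|+O(1/H).
\]
Expanding the inner product and exploiting the invariance of $\mu$ to factor out a common $T^{jn^g}$ from each pair of functions indexed by $j$, one gets
\[
\langle a_{n+h},a_n\rangle = e(\phi_{1,h}(n)\alpha)\int\prod_{j=1}^{\ell}T^{jn^g}\bigl(f_j \cdot T^{j((n+h)^g-n^g)}\bar f_j\bigr)\,d\mu,
\]
where $\phi_{1,h}(n)=(n+h)^{\ell d}+(n+h)^d-n^{\ell d}-n^d$ has degree $\ell d-1$ in $n$ with leading coefficient $\ell d\cdot h$. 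Bounding the absolute value of the integral by an $L^2$ norm via Cauchy--Schwarz and Fubini, one reduces to controlling a new multiple ergodic average of the same shape, with $h$-dependent functions and a phase polynomial of strictly smaller degree.

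I would then iterate this reduction. PET bookkeeping shows that after a finite number $r = r(\ell,g)$ of iterations, all transformation factors $T^{\cdot n^g}$ can be eliminated, leaving a pure exponential sum
\[
\tfrac1N\sum_{n=1}^{N}e\bigl(Q(n;h_1,\dots,h_r)\alpha + R(n;h_1,\dots,h_r)\bigr),
\]
where $Q$ is the $r$-fold multi-difference of $n^{\ell d}+n^d$ in the directions $h_1,\dots,h_r$, and $R$ collects lower-order contributions arising from the eliminated transformation factors. The leading monomial of $Q$ in $n$ is $\ell d(\ell d - 1)\cdots(\ell d - r + 1)\,h_1 h_2 \cdots h_r\,n^{\ell d - r}$. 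Choosing $d(\ell,g)$ large enough that $\ell d - r \geq 1$ ensures that, whenever all $h_i \neq 0$, the leading coefficient of $Q(n)\alpha$ is an irrational multiple of $\alpha$. Weyl's theorem then yields $\lim_N \tfrac1N\sum_n e(Q(n)\alpha+R(n))=0$ uniformly in the lower-order coefficients, and averaging over $H$ large absorbs the $O(1/H)$ contribution from the degenerate shifts with some $h_i=0$.

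The main obstacle is the PET bookkeeping: correctly tracking how the family of polynomials $\{jn^g\}_{j=1}^{\ell}$ and the phase $n^{\ell d}+n^d$ evolve under repeated VdC steps, and pinning down the threshold $d(\ell,g)$ for which the leading $n^{\ell d}$-term survives all the differencing. Once this is settled, the Weyl estimate is classical, and the fact that no system- or function-dependent quantity appears in the final exponential sum makes the supremum over $S_1$ harmless, yielding the claimed uniform convergence.
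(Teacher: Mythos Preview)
Your approach is essentially the paper's: iterate the Hilbert-space van der Corput lemma via Bergelson's PET on the family $\{jn^g\}_{j=1}^\ell$ until the transformation factors disappear, leaving a scalar Weyl sum controlled by the irrationality of $\alpha$, with the threshold $d(\ell,g)$ chosen so the phase $n^{\ell d}+n^d$ survives the differencing. Two minor points: the paper continues VdC for $\ell d-1$ steps so the residual phase is linear (rather than stopping at $r(\ell,g)$ and invoking Weyl on a degree $\ell d-r$ polynomial as you do), and your term $R(n;h_1,\dots,h_r)$ is spurious---the eliminated $T$-factors contribute nothing to the scalar phase, so only $\Delta_{h_1,\dots,h_r}(n^{\ell d}+n^d)\alpha$ remains.
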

\begin{proof}
  The main idea is to apply a Hilbert space version of van der Corput's classical inequality (see Section~\ref{SS:VDC}) several times in
  order to get an upper bound for the expression
  \begin{equation}\label{E:beforeVDC}
    \norm{ \frac1N  \sum_{n=1}^N  e\big( (n^{\ell d}+n^d)\alpha\big)\cdot
      T^{n^g} f_1\cdot  T^{2n^g}f_2\cdot\ldots\cdot
      T^{\ell n^g}f_\ell}{L^2(\mu)}{2^{\ell d -1}}
  \end{equation}
  that does not depend on the transformation $T$ or the functions
  $f_1,\ldots, f_\ell$. Such an estimate can be obtained using a
  rather standard argument, very much along the lines of the
  polynomial exhaustion technique introduced by Bergelson in
  \cite{Be1}.  To do this we shall use Lemma~\ref{L:PET1} and
  Proposition~\ref{P:PET2} in the Appendix. We get that for $d$ large
  enough (depending on $\ell$ and $g$ only), the quantity in
  \eqref{E:beforeVDC} is bounded by a constant multiple of
  \begin{equation}\label{E:afterVDC}
    \frac{1}{H_1\cdots H_{\ell d-1}}\sum_{1\leq h_i\leq H_i} \Big|\frac{1                                                                                 }{N}\sum_{n=1}^N e\big(\Delta_{h_1,\ldots,h_{\ell d-1}}(n^{\ell d}+n^d)\alpha\big)\Big|+o_{N,H_i,H_i\prec N}(1),
  \end{equation}
  where $\Delta_h(a_n)=a_{n+h}-a_n$,
  $\Delta_{h_1,\ldots,h_r}(a_n)=\Delta_{h_1}\Delta_{h_2}\cdots\Delta_{h_r}(a_n)$,
  and $o_{N,H_i,H_i\prec N}(1)$ denotes a quantity that goes to zero
  as $N, H_i \to \infty $ in a way that $H_i/N\to 0$. Notice that the
  sequence $\Delta_{h_1,\ldots,h_{\ell d-1}}(n^{\ell d}+n^d)$ is
  linear in $n$. Since $\alpha$ is irrational, letting $N\to +\infty$
  and then $H_i\to+\infty$, we get that the quantity \eqref{E:afterVDC}
  converges to $0$. This completes the proof.
\end{proof}

Using that $\frac1N \sum_{n=1}^{N} \indicator_{I_\pm}((n^{\ell b}+n^b)\alpha) \to
\frac14$ for $b\in\N$, and Lemmas~\ref{L:1'} and \ref{L:2'}, the next
result is proved in a similar fashion as
Proposition~\ref{construction}.
\begin{proposition}\label{construction'}
  There exists a sequence $(b_j)$ that satisfies conditions (b1),
  (b2), and (b3) of Section~\ref{SS:s_n2}.
\end{proposition}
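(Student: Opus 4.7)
The plan is to mirror the proof of Proposition~\ref{construction}, with Lemmas~\ref{L:1'} and \ref{L:2'} playing the roles that Lemma~\ref{uniform} played in the single mean setting. Condition (b1) will be built into the construction, and (b2) is identical to its single-mean counterpart and can be handled exactly as before using Weyl equidistribution, so the substance is to verify (b3).

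First I would introduce a threshold function $J(b,K,\e)$ defined as the smallest $J\geq 1$ such that, for $N\geq 2^{2J-1}$, both the equidistribution bound
$$\Big|\tfrac1N\sum_{n=1}^N \indicator_{I_\pm}(n^b\alpha)-\tfrac14\Big|\leq\e$$
and, for every nonzero $l\in\Z$ with $|l|\leq K$ and every $g\in\{1,\ldots,K\}\setminus B$ satisfying $b\geq d(\ell,g)$, the uniform-in-system bound
$$\sup_{S_1}\Big\|\tfrac1N\sum_{n=1}^N e\bigl(l(n^{\ell b}+n^b)\alpha\bigr)\cdot T^{n^g}f_1\cdots T^{\ell n^g}f_\ell\Big\|_{L^2(\mu)}\leq\e$$
are satisfied. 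The existence of such a $J$ follows from Weyl's theorem for the first inequality and from Lemma~\ref{L:2'} (with $\alpha$ replaced by $l\alpha$) applied to each of the finitely many admissible pairs $(l,g)$ for the second. I may assume $J(b,K,\e)$ is non-decreasing in $b$ and $K$ and non-increasing in $\e$.

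Next, enumerate $B=\{a_1<a_2<\dots\}$ and set $J_1=J(a_1,1,1)$, $J_t=\max\{J(a_t,t,1/t),\,J_{t-1}+t\}$ for $t\geq 2$. I place $b_{J_t}=a_t,\,b_{J_t+1}=a_{t-1},\ldots,b_{J_t+t-1}=a_1$ and set every unspecified $b_j$ equal to $a_1$. Then (b1) is immediate, and (b2) follows exactly as in Proposition~\ref{construction}. To verify (b3), fix a system, functions, a nonzero $k\in\Z$, a $g\in\N\setminus B$, and $\e>0$. For $j$ sufficiently large we have $J_t\leq j<J_{t+1}$ with $t\geq\max(|k|,g)$ and $b_j\in\{a_1,\ldots,a_t\}$. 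I split these finitely many possible values of $b_j$ into the ``large'' ones with $b_j\geq d(\ell,g)$ and the ``small'' ones with $b_j<d(\ell,g)$. For the large values, the bound $J_t\geq J(a_t,t,1/t)$ combined with monotonicity in $b$ yields, via Lemma~\ref{L:2'}, the estimate $\sup_{N\geq 2^{2j-1}}\|\cdot\|\leq 1/t$ uniformly in the system. The small values form a finite set independent of $t$, and for each fixed small $b\in B$, Lemma~\ref{L:1'} (applicable since $b\neq g$) gives $\lim_{N\to\infty}\|\cdot\|=0$ for the specific system under consideration, hence $\sup_{N\geq 2^{2j-1}}\|\cdot\|<\e$ once $j$ exceeds a system-dependent threshold; taking $j$ large enough to dominate the maximum of these finitely many thresholds completes the verification.

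The main obstacle is the asymmetry between the uniform-in-system bound from Lemma~\ref{L:2'} (available only when $b\geq d(\ell,g)$) and the merely per-system bound from Lemma~\ref{L:1'} (needed for the finitely many small values $b<d(\ell,g)$). The reconciliation is clean because (b3) is itself a per-system statement: once the system is fixed, the contribution of the small $b_j$'s can be absorbed into the initial threshold $j_0$ beyond which we claim the supremum is smaller than $\e$, even though that threshold depends on the system.
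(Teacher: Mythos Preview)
Your proposal is correct and follows exactly the approach the paper indicates: the paper's own proof is a one-sentence pointer to Proposition~\ref{construction} together with Lemmas~\ref{L:1'} and \ref{L:2'}, and you have supplied precisely the expected details --- the threshold construction with monotonicity, the enumeration of $B$, the uniform-in-system bound from Lemma~\ref{L:2'} for the values $b_j\geq d(\ell,g)$, and the per-system fallback via Lemma~\ref{L:1'} for the finitely many small values (applicable since $b_j\in B$ forces $b_j\neq g$). Your closing remark about the asymmetry between uniform and per-system bounds and why (b3) tolerates this is exactly the point.
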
\label{P:2'}

\subsection{The sequence $(s_n)$ satisfies (s1) and (s2)}
The next result is proved in essentially the same way as
Proposition~\ref{P:mean} and allows us to immediately deduce
Theorem~\ref{T:multiple}.
\begin{proposition}
  Suppose that the sequence $(b_j)$ satisfies conditions (b1), (b2),
  and (b3) of Section~\ref{SS:s_n2}.  Then the sequence $(s_n)$
  defined by \eqref{E:S'} satisfies conditions (s1) and (s2) of
  Proposition~\ref{mean'}.
\end{proposition}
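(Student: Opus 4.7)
My plan is to mirror the three-step structure of Proposition~\ref{P:mean} from Section~\ref{sectionmean}, substituting the polynomial $n^{\ell b}+n^b$ for $n^b$ and $L^2(\mu)$-valued multiple averages for scalar averages. First I would establish that the set $S$ defined by \eqref{E:S'} has density $1/4$ by invoking condition (b2) together with Lemma~\ref{L:Wierdl} in the Appendix, exactly as in Lemma~\ref{density} (using that the leading coefficient of $n^{\ell b_j}+n^{b_j}$ is $\alpha$, so Weyl equidistribution applies on dyadic blocks). Once the density is in hand, (s1) reduces to divergence of $\frac1N\sum_{n=1}^N \indicator_S(n)\,e((n^{\ell b}+n^b)\alpha)$ for $b\in B$, and (s2) reduces to showing that $\frac1N\sum_{n=1}^N (\indicator_S(n)-\tfrac14)\,T^{n^g}f_1\cdot T^{2n^g}f_2\cdots T^{\ell n^g}f_\ell\to 0$ in $L^2(\mu)$ for every $g\in\N\setminus B$.

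For (s1) I would follow Proposition~\ref{P:1}. Fix $b\in B$; by (b1) there are arbitrarily large $j$ with $b_j=b$. Comparing $\frac{1}{2^{2j}}\sum_{n=1}^{2^{2j}}\indicator_S(n)\cos(2\pi(n^{\ell b}+n^b)\alpha)$ with the corresponding average at $N=2^{2j+1}$, the very definition of $S$ forces the cosine to lie in $[\sqrt{2}/2,1]$ on $[2^{2j-1},2^{2j})\cap S$ and in $[-1,-\sqrt{2}/2]$ on $[2^{2j},2^{2j+1})\cap S$. Combined with (b2) and the density $1/4$, the difference stays bounded below by the positive constant $-\tfrac{1}{16}+\tfrac{\sqrt{2}}{32}+\tfrac{\sqrt{2}}{16}$ for all large such $j$, giving divergence.

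For (s2) I decompose $\indicator_S(n)=\tfrac14+(\indicator_S(n)-\tfrac14)$. The main-term contribution $\tfrac{1}{4N}\sum_{n=1}^N T^{n^g}f_1\cdots T^{\ell n^g}f_\ell$ converges in $L^2(\mu)$ since $n^g,2n^g,\ldots,\ell n^g$ are polynomials and polynomial multiple averages converge by the theorems of Host--Kra~\cite{HK2} and Leibman~\cite{L1}. For the error term I prove an $L^2(\mu)$-valued analog of Lemmas~\ref{L:mean} and~\ref{L:mean1}: if $\phi,\psi$ are trigonometric polynomials on $\T$ with zero integral, expanding $f_n(\phi,\psi)$ defined as in \eqref{alternance} (with $n^{b_j}\alpha$ replaced by $(n^{\ell b_j}+n^{b_j})\alpha$) into finitely many exponentials $e(k(n^{\ell b_j}+n^{b_j})\alpha)$ with $k\neq0$ and applying (b3) termwise, together with the dyadic block reassembly supplied by Lemma~\ref{L:Wierdl}, yields
\[
\lim_{N\to\infty}\Big\|\tfrac1N\sum_{n=1}^N f_n(\phi,\psi)\,T^{n^g}f_1\cdots T^{\ell n^g}f_\ell\Big\|_{L^2(\mu)}=0.
\]
Approximating $\phi_0=\indicator_{I_+}-\tfrac14$ and $\psi_0=\indicator_{I_-}-\tfrac14$ in $L^1(\T)$ by such $\phi,\psi$ and sandwiching $|\phi-\phi_0|,|\psi-\psi_0|$ between continuous functions as in the proof of Lemma~\ref{L:mean1} then extends the vanishing to $f_n(\phi_0,\psi_0)=\indicator_S(n)-\tfrac14$.

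The main obstacle is making the $L^1(\T)$-to-$L^2(\mu)$ approximation step precise in the multiple-average setting. The key observation is that $\|T^{n^g}f_1\cdots T^{\ell n^g}f_\ell\|_{L^2(\mu)}$ is dominated pointwise in $n$ by $\prod_i\|f_i\|_{L^\infty(\mu)}$, so the $L^2(\mu)$-norm of the replacement error at step $n$ is bounded by the scalar quantity $f_n(|\phi-\phi_0|,|\psi-\psi_0|)$ times a constant depending only on the $f_i$. The Riemann-integrability sandwiching of Lemma~\ref{L:mean1} therefore controls the scalar average of this bound, and no further ergodic input is required beyond what (b2), (b3), and the Host--Kra--Leibman theorem already supply.
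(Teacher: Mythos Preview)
Your proposal is correct and follows precisely the route the paper has in mind: the paper's own proof of this proposition consists solely of the remark that it ``is proved in essentially the same way as Proposition~\ref{P:mean},'' and your write-up is exactly that adaptation, replacing $n^{b_j}$ by $n^{\ell b_j}+n^{b_j}$, scalar averages by $L^2(\mu)$-valued multiple averages, and the $\ell=1$ convergence input by the Host--Kra/Leibman polynomial convergence theorem. Your handling of the one genuinely new point---that the approximation step of Lemma~\ref{L:mean1} still works because the $L^2(\mu)$-norm of each product $T^{n^g}f_1\cdots T^{\ell n^g}f_\ell$ is uniformly bounded by $\prod_i\|f_i\|_\infty$, reducing the error to a purely scalar estimate---is exactly right, and the vector-valued use of Lemma~\ref{L:Wierdl} is harmless since its proof uses only the triangle inequality.
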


\section{Pointwise convergence}
We shall prove Theorem~\ref{T:main}. The argument is
similar to the one used to prove Theorem~\ref{sec:main-result-1}. However, extra complications arise
since, as is typical for pointwise results, we need to establish quantitative estimates for some
trigonometric sums.
%%To address this issue, we do not work with
%%arbitrary irrational numbers (as was the case for the mean convergence
%%results) but only with those irrationals that are badly
%%approximable\footnote{

Throughout this section we shall assume that the irrational $\alpha$
is badly approximable, that means,
  there exists a positive real number  $c$ such that for every $p\in \Z$ and $q\in \N$ we
  have $|\alpha-p/q|\geq c/q^2$. In fact, for convenience, we shall fix $\alpha$ to be the {\it golden mean}
  $(\sqrt{5}+1)/2$, in which case  the previous estimate holds with  $c=1/3$.
\subsection{A reduction}
Theorem~\ref{T:main} is a direct consequence of the following result:
\begin{theorem}\label{T:pointwise}
  Let $B$ be an arbitrary set of positive integers  and $\alpha$ be the golden mean.  Then, there exists
  an increasing sequence $(s_n)$ of integers such that
  \begin{enumerate}
  \item[(s1)] For every $b\in B$ the sequence $\big(\frac{1}{N}\sum_{n=1}^N e(s_n^b\alpha)\big)$ diverges.
  \item[(s2)] For every $g\in \mathbb N\setminus B$ the sequence
    $(s_n^g)$ is good for the pointwise ergodic theorem.
  \end{enumerate}
\end{theorem}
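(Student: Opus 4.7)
The plan is to adapt the construction from Section~\ref{sectionmean}, with quantitative refinements exploiting that the golden mean $\alpha$ is badly approximable. I define $S \subset \mathbb{N}$ exactly as in \eqref{E:S}, let $(s_n)$ be its increasing enumeration, and choose a sequence $(b_j)\in B$ satisfying (b1) every element of $B$ occurs infinitely often, together with pointwise/maximal strengthenings of (b2)--(b3). Condition (s1) then follows verbatim as in Proposition~\ref{P:1}, since the persistent oscillation of $\cos(2\pi s_n^b\alpha)$ on consecutive dyadic blocks whenever $b_j=b$ uses no pointwise information.

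To verify (s2), fix $g \in \mathbb{N}\setminus B$, a system $(X,\mathcal{B},\mu,T)$, and $f \in L^2(\mu)$. Since $S$ has density $1/4$, one has $s_N \sim 4N$, and proving a.e. convergence of $\frac{1}{N}\sum_{n=1}^N f(T^{s_n^g}x)$ reduces, modulo boundary terms controllable by a maximal inequality, to a.e. convergence of
\[ \frac{1}{M}\sum_{m=1}^M \mathbf{1}_S(m)\, f(T^{m^g} x). \]
Decompose $\mathbf{1}_S = 1/4 + (\mathbf{1}_S - 1/4)$: the $1/4$ piece is handled by Bourgain's pointwise theorem for the polynomial orbit $(m^g)$. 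Expanding $\mathbf{1}_{I_\pm} - 1/4 = \sum_{l\neq 0} c_l^\pm\, e(l\cdot)$ with Fourier coefficients $|c_l^\pm| = O(1/|l|)$, the remainder reduces further to a.e.\ decay of
\[ \frac{1}{M}\sum_{m=1}^M e\bigl(l\, m^{b_{j(m)}}\alpha\bigr)\, f(T^{m^g} x) \]
for each fixed $l \neq 0$, combined with an $L^2$ maximal inequality in $l$ whose growth is dominated by the $1/|l|$ Fourier decay so that the Fourier sum and the limit in $M$ may be exchanged.

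The main obstacle is this quantitative maximal inequality uniform in $l$, and here the golden-mean hypothesis is crucial: the Diophantine estimate $|\alpha - p/q| \geq 1/(3q^2)$ feeds into Weyl-type bounds for the exponential sums $\sum e(l m^{b_j}\alpha + m^g\beta)$, and the Bourgain--Stein--Wainger circle-method approach to polynomial ergodic averages then yields both the pointwise null limit and the required maximal estimate with only polylogarithmic loss in $|l|$. The finitely many indices $j$ with $b_j \leq g$ are handled separately by Weyl equidistribution, just as in the proof of Proposition~\ref{construction}. The construction of $(b_j)$ mirrors Section~\ref{SS:b_j1}, but with the threshold $J(b,l,\varepsilon)$ enlarged to force the maximal bound, not merely the Ces\`aro bound \eqref{E:22}, to hold at all scales $N \geq 2^{2j-1}$. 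A dyadic Borel--Cantelli argument along $N=2^k$, coupled with control of the maximal function between dyadic scales, then yields the full a.e.\ convergence and completes the proof.
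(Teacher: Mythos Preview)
Your overall architecture matches the paper's: the same set $S$, the same dyadic oscillation argument for (s1), the decomposition $\mathbf{1}_S = 1/4 + (\mathbf{1}_S - 1/4)$ with Bourgain handling the first piece, and quantitative Weyl bounds exploiting the bad approximability of the golden mean. The divergence between your sketch and the paper lies in how the second piece is treated, and here your argument has a real gap.

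You propose to Fourier-expand $\mathbf{1}_{I_\pm}-1/4$ fully, obtain a.e.\ decay for each mode $l$, and then interchange $\sum_{l\neq 0}$ with $\lim_M$ via an $L^2$ maximal inequality in $l$ ``with only polylogarithmic loss in $|l|$''. But the Fourier coefficients of an interval satisfy $|c_l|\asymp 1/|l|$ and no better, so a polylogarithmic loss gives $\sum_l |c_l|\,(\log|l|)^A = \infty$; the interchange fails as stated. One can try to compensate using the explicit $|l|$-dependence in the Weyl bounds (the paper's Lemma~\ref{gsb} gives a power of $|l|$ against a power saving in $N$), but turning that into a maximal inequality of the required strength is a genuine project, not a one-line appeal to Bourgain--Stein--Wainger.

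The paper avoids this entirely, and the mechanism is worth internalising. Rather than summing over all Fourier modes, the paper first restricts to trigonometric polynomial weights $f_n(\phi,\psi)$ (finitely many modes). For these, the spectral theorem converts the pointwise question for $\frac{1}{N}\sum_n w_n\,T^{a_n}f$ into summability of $\sup_{\beta}\big|\frac{1}{[\gamma^k]}\sum_n w_n\,e(a_n\beta)\big|$ along lacunary $[\gamma^k]$ (Lemma~\ref{standard}); the Weyl-type estimates of Proposition~\ref{P:key} deliver a power saving $N^{-\eta(N)}$ with $\eta(N)=(\log_2 N)^{-1/2}$, which is summable along any geometric sequence. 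This yields a.e.\ vanishing of the weighted averages for $f\in L^2$ directly, with no maximal inequality needed. The passage from trigonometric polynomials to $\mathbf{1}_{I_\pm}-1/4$ is then done by the deterministic $L^1(\mathbb{T})$-approximation of Lemma~\ref{L:mean1}: for $f\in L^\infty(\mu)$ the error is bounded pointwise by $\|f\|_\infty\cdot\frac{1}{N}\sum_n f_n(|\phi_0-\phi|,|\psi_0-\psi|)$, which tends to $\int|\phi_0-\phi|<\varepsilon$. Only at the very end is Bourgain's maximal inequality invoked, and only for the \emph{unweighted} averages $\frac{1}{N}\sum T^{n^g}f$, to upgrade from $L^\infty$ to $L^2$. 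This sidesteps any need for uniformity in $l$.
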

Our first goal is to find a more convenient condition to replace
(s2). To do this we are going to use the following lemma:
\begin{lemma}\label{standard}
  Let $(w_n)_{n\in\N}$ be a bounded sequence of complex numbers, and
  $(a_n)_{n\in\N}$ be a sequence of positive integers, such that for
  all $\gamma>1$ we have
$$
\sum_{k=1}^\infty\sup_{\beta\in\R}\left|
  \frac1{\left[\gamma^k\right]}\sum_{n=1}^{\left[\gamma^k\right]}w_n \ \! e(a_n\beta)\right|^2<+\infty.
$$
%% Suppose also that for every system $(X,\mathcal{B},\mu,T)$, and
%% $f\in L^{2}(\mu)$, the averages
%%$$
%%\frac1N \sum_{n=1}^N T^{a_n}f \qquad\text{converge ${\mu}$-almost
%%  everywhere.}
%%$$
Then for every system $(X,\mathcal{B},\mu,T)$ and $f\in
L^2(\mu)$, we have
$$
\lim_{N\to\infty}\frac1N\sum_{n=1}^Nw_n\cdot
T^{a_n}f=0\qquad\text{${\mu}$-almost everywhere.}
$$
\end{lemma}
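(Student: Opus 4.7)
The plan is to combine a Borel--Cantelli-style argument along the exponentially spaced subsequence $([\gamma^k])_{k\ge 1}$ with a bridging estimate for the gaps, proving the conclusion first for bounded $f$ and then extending to $L^2(\mu)$ via a maximal inequality.

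First, the spectral theorem for the unitary operator $T$ on $L^2(\mu)$ yields, with $\sigma_f$ the spectral measure of $f$ and $A_N f := \frac{1}{N}\sum_{n=1}^N w_n T^{a_n} f$,
\[
\norm{A_N f}{L^2(\mu)}{2} = \int_\T \Bigl| \tfrac{1}{N}\sum_{n=1}^N w_n e(a_n\beta) \Bigr|^2 d\sigma_f(\beta) \le \Bigl( \sup_{\beta\in\R}\Bigl| \tfrac{1}{N}\sum_{n=1}^N w_n e(a_n\beta)\Bigr|\Bigr)^{\!2} \norm{f}{L^2(\mu)}{2}.
\]
Fixing any $\gamma > 1$, the hypothesis gives $\sum_k \norm{A_{[\gamma^k]} f}{L^2(\mu)}{2} < +\infty$; monotone convergence then yields $\sum_k |A_{[\gamma^k]} f(x)|^2 < +\infty$ for $\mu$-a.e.\ $x$, so $A_{[\gamma^k]} f(x) \to 0$ for $\mu$-a.e.\ $x$.

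Second, I would bridge the gaps. For $[\gamma^k] \le N < [\gamma^{k+1}]$ the identity
\[
A_N f \;=\; \tfrac{[\gamma^k]}{N} A_{[\gamma^k]} f \;+\; \tfrac{1}{N}\sum_{n=[\gamma^k]+1}^{N} w_n T^{a_n} f
\]
shows that the first summand tends to $0$ $\mu$-a.e.\ by Step~1, while for $f \in L^\infty(\mu)$ the second summand is pointwise bounded by $\norm{w}{\infty}{}\norm{f}{\infty}{}(\gamma-1) + o(1)$. Therefore $\limsup_N |A_N f(x)| \le (\gamma-1)\norm{w}{\infty}{}\norm{f}{\infty}{}$ $\mu$-a.e., and letting $\gamma$ run through the countable family $1+1/m$, $m\ge 1$ (legitimate since the hypothesis holds for every $\gamma > 1$), produces $A_N f \to 0$ $\mu$-a.e.\ for every bounded $f$.

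Third, to pass from $L^\infty(\mu)$ to $L^2(\mu)$, one needs the maximal inequality $\bigl\|\sup_N |A_N f|\bigr\|_{L^2(\mu)} \le C \norm{f}{L^2(\mu)}{}$. The piece $\sup_k |A_{[\gamma^k]} f|$ is already handled by the $\ell^2$-summability of Step~1, since $\bigl\|\sup_k |A_{[\gamma^k]}f|\bigr\|_{L^2}^2 \le \sum_k \norm{A_{[\gamma^k]}f}{L^2}{2} \le C_\gamma \norm{f}{L^2}{2}$. The remaining gap contribution is controlled by a Rademacher--Menshov dyadic decomposition of the partial sums $\sum_{n=[\gamma^k]+1}^{[\gamma^k]+M} w_n T^{a_n} f$, bounding each dyadic block in $L^2(\mu)$ via the spectral theorem and using the availability of the hypothesis for arbitrary $\gamma > 1$ to make the $k$-dependence summable. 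Once the maximal inequality is established, the set of $f \in L^2(\mu)$ for which $A_N f \to 0$ $\mu$-a.e.\ is closed in $L^2(\mu)$ and contains $L^\infty(\mu)$, so it equals $L^2(\mu)$. The main obstacle is precisely this maximal inequality for the gap: because the vectors $(w_n T^{a_n} f)$ are not orthogonal in $L^2(\mu)$, the Rademacher--Menshov argument requires spectral-side block estimates, and extracting a summable $k$-dependence from them is where the strength of the hypothesis---validity for every $\gamma > 1$, not just a single value---is essential.
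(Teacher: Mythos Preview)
Your Steps 1 and 2 are exactly the paper's argument. The paper uses the spectral theorem and the $\ell^2$-summability hypothesis to get $A_{[\gamma^k]}f\to 0$ $\mu$-a.e.\ for every $f\in L^2(\mu)$ and every $\gamma>1$, then simply writes ``The announced result now follows from Lemma~1.5 in [RW]''---that cited lemma is precisely the lacunary-to-full bridging you spell out in Step~2.

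Your Step~3 is where you diverge from the paper, and it is also where your argument is incomplete. The paper does \emph{not} prove a maximal inequality and does not pass through $L^\infty$ first; it applies the bridging lemma from [RW] directly after the subsequential a.e.\ convergence has been established for $f\in L^2(\mu)$. Your Rademacher--Menshov sketch for the gap contribution does not go through as stated: the hypothesis controls $\sup_\beta\bigl|\frac{1}{[\gamma^k]}\sum_{n\le[\gamma^k]}w_n e(a_n\beta)\bigr|$ only at the lacunary times $[\gamma^k]$, and gives no information about sums over dyadic sub-blocks $\sum_{n\in I}w_n e(a_n\beta)$ with $I\subset([\gamma^k],[\gamma^{k+1}])$. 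Without such block estimates the spectral-side input to a Rademacher--Menshov argument is missing, and varying $\gamma$ does not manufacture it. You yourself flag this as ``the main obstacle,'' and it is a genuine one.

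It is worth noting that in the paper's sole application of this lemma (Proposition~\ref{P:pointwise}) the conclusion is invoked only for $f\in L^\infty(\mu)$, after which the passage to $L^2(\mu)$ is carried out separately using Bourgain's maximal inequality for the specific sequence $a_n=n^g$. So even if one reads the bridging lemma in [RW] as requiring pointwise-bounded summands (hence $f\in L^\infty$), your Steps~1--2 already deliver everything the paper actually needs, and Step~3 is unnecessary for the paper's purposes.
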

\begin{proof}%%[Sketch of the proof]
 Using the spectral theorem for unitary operators we get that
  $$
\Big\|\frac1N\sum_{n=1}^Nw_n\cdot
  T^{a_n}f\Big\|_2^2= \int \Big|\frac1N\sum_{n=1}^{N}w_n \ \! e(a_nt)\Big|^2 \ d\sigma_f(t)
  $$
holds for every $N\in\N$, where $\sigma_f$ denotes the spectral measure of the function $f$. As a consequence
$$
\Big\|\frac1N\sum_{n=1}^Nw_n\cdot
  T^{a_n}f\Big\|_2^2\leq\|f\|_2^2\cdot\sup_{t\in\R}\Big|\frac1N\sum_{n=1}^{N}w_n \ \! e(a_n t)\Big|^2.
$$
Combining this with our hypothesis, we get that if $\gamma>1$,
then
$$
\lim_{k\to+\infty}\frac1{\left[\gamma^k\right]}\sum_{n=1}^{\left[\gamma^k\right]}w_n\cdot
T^{a_n}f=0\qquad\text{$\mu$-almost everywhere for $f\in L^2(\mu)$}.
$$
(We used  that $\sum_{k=1}^\infty\|f_k\|_2<\infty$ implies $f_k\to 0$ pointwise.)
The announced result now follows from  Lemma 1.5 in \cite{RW}.
\end{proof}
Let  $I_+, I_-$ be the intervals defined by \eqref{E:I}. Given a sequence of positive integers $(b_j)$ and
 functions $\phi,\psi\colon \T\to \T$ let  $(f_n(\phi,\psi))$ be the sequence defined by \eqref{alternance}.
As in Section~\ref{sectionmean}, we define a
sequence $(s_n)$ by taking the elements of the set $S$ given by
$$
  \indicator_S(n)= f_n(\indicator_{I_+}, \indicator_{I_-})
$$
in increasing order.
\begin{proposition}\label{P:pointwise}
  Let $B$ be an arbitrary set of positive integers,  $\alpha\in \R$, and let the sequences
   $(f_n(\phi,\psi))$ and   $(s_n)$ be as above. Suppose that
  \begin{enumerate}
  \item[(s1')] For every $b\in B$, the sequence $\Big(\frac1N
    \sum_{n=1}^N e(s_n^b\alpha)\Big)$ diverges.
  \item[(s2')] For all trigonometric polynomials $\phi$ and $\psi$
    with zero integral, $\gamma>1$, and $g\in \N\setminus B$, we have
 $$
 \sum_{k\geq0}\sup_{\beta\in\R}\left|\frac1{\left[\gamma^k\right]}\sum_{n=1}^{\left[\gamma^k\right]}
   f_n(\phi,\psi) \ \! e(n^g\beta)\right|<+\infty.
  $$
  %% \end{lemma}
\end{enumerate}
Then the sequence $(s_n)$ satisfies conditions (s1) and (s2) of
Theorem~\ref{T:pointwise}.
\end{proposition}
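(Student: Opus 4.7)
The plan is to derive (s1) and (s2) of Theorem~\ref{T:pointwise} from (s1') and (s2'). Since (s1) is literally (s1'), the only work is (s2). Fix $g\in \mathbb{N}\setminus B$, a system $(X,\mathcal{B},\mu,T)$, and $f\in L^2(\mu)$. First I would pass from the thinned average $\frac{1}{M}\sum_{n=1}^M f(T^{s_n^g}x)$ to the weighted average $B_N(x):=\frac{1}{N}\sum_{n=1}^N \indicator_S(n)\,f(T^{n^g}x)$: writing $M = |S\cap [1,N]|$ and using that $S$ has density $1/4$ (Lemma~\ref{density}), a.e.\ convergence of one is equivalent to a.e.\ convergence of the other (up to the factor $1/4$).

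Next I would use the decomposition $\indicator_S(n)=\tfrac14+f_n(\phi_0,\psi_0)$ with $\phi_0=\indicator_{I_+}-\tfrac14$ and $\psi_0=\indicator_{I_-}-\tfrac14$. The constant piece $\frac{1}{4N}\sum_n f(T^{n^g}x)$ converges a.e.\ by Bourgain's pointwise theorem for the polynomial iterates $(n^g)$, so it suffices to show that $E_N(x):=\frac{1}{N}\sum_n f_n(\phi_0,\psi_0)\,f(T^{n^g}x)\to 0$ $\mu$-a.e.

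Given $\varepsilon>0$, approximate $\phi_0$ and $\psi_0$ in $L^1(\mathbb{T})$ by zero-mean trigonometric polynomials $\phi,\psi$ with $\int_{\mathbb{T}}|\phi-\phi_0|,\,\int_{\mathbb{T}}|\psi-\psi_0|\leq\theta\leq\varepsilon$, and split $E_N=E_N^{\mathrm{main}}+E_N^{\mathrm{err}}$ where $E_N^{\mathrm{main}}$ uses $f_n(\phi,\psi)$ and $E_N^{\mathrm{err}}$ uses $f_n(\phi_0-\phi,\psi_0-\psi)$. For $E_N^{\mathrm{main}}$, (s2') gives $\sum_k\sup_\beta|\cdot|<\infty$; since these terms tend to $0$ and hence are eventually $\leq 1$, their squares are also summable, and Lemma~\ref{standard} (applied with $w_n=f_n(\phi,\psi)$, which is bounded, and $a_n=n^g$) yields $E_N^{\mathrm{main}}(x)\to 0$ a.e. For $E_N^{\mathrm{err}}$, I would follow the sandwich of Lemma~\ref{L:mean1}: for any $\delta>0$, dominate $|\phi-\phi_0|\leq\phi_2+\theta+\delta$ and $|\psi-\psi_0|\leq\psi_2+\theta+\delta$ with $\phi_2,\psi_2$ zero-mean trigonometric polynomials, to obtain
$$|E_N^{\mathrm{err}}(x)|\leq \Big|\frac{1}{N}\sum_{n=1}^N f_n(\phi_2,\psi_2)\,|f(T^{n^g}x)|\Big|+(\theta+\delta)\cdot\frac{1}{N}\sum_{n=1}^N|f(T^{n^g}x)|.$$
The first summand tends to $0$ a.e.\ by (s2') and Lemma~\ref{standard} applied to $|f|\in L^2(\mu)$, and the second converges a.e.\ to some $\tilde f(x)$ by Bourgain's theorem applied to $|f|$. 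Letting $\delta\to 0$ gives $\limsup_N|E_N^{\mathrm{err}}|\leq\theta\,\tilde f$, and running $\theta$ through a countable sequence tending to $0$ forces $E_N\to 0$ a.e.

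The main obstacle is the error-term step: we must convert an $L^1$-approximation of indicator functions into an a.e.\ pointwise bound without invoking any weighted pointwise ergodic theorem. The idea is that the specific structure $\indicator_S=\tfrac14+f_n(\phi_0,\psi_0)$ combined with (s2') + Lemma~\ref{standard} takes care of the trigonometric pieces, while the Riemann-integrable error is controlled by sandwiching it between zero-mean trigonometric polynomials and a constant, the constant part being absorbed by Bourgain's pointwise theorem for $|f|$. This is precisely why the hypothesis (s2') had to be formulated as an $\ell^1$-summable maximal estimate over $\beta$: it is strong enough to survive the approximation passage via Lemma~\ref{standard}.
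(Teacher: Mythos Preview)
Your argument is correct and follows the same overall strategy as the paper: decompose $\indicator_S(n)=\tfrac14+f_n(\phi_0,\psi_0)$, handle the constant piece via Bourgain's pointwise theorem, handle trigonometric-polynomial weights via (s2') and Lemma~\ref{standard}, and push the approximation of Lemma~\ref{L:mean1} through to reach $\phi_0,\psi_0$. Two minor remarks.

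First, your appeal to Lemma~\ref{density} for the density of $S$ is misplaced: that lemma lives in Section~\ref{sectionmean} and relies on condition~(b2), which is not among the hypotheses (s1'), (s2') of the present proposition. The paper instead deduces that $S$ has density $\tfrac14$ \emph{a posteriori}, by setting $f\equiv1$ in the already-established convergence $\frac1N\sum_n(\indicator_S(n)-\tfrac14)\,T^{n^g}f\to0$ a.e. You should reorder your argument so that the passage from the weighted average $B_N$ to the thinned average comes last, after this density has been derived.

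Second, and this is a genuine if small difference in organization: the paper first establishes $\frac1N\sum_n(\indicator_S(n)-\tfrac14)\,T^{n^g}f\to0$ a.e.\ for $f\in L^\infty(\mu)$ (where the approximation argument of Lemma~\ref{L:mean1} applies verbatim since $|f|\leq 1$ lets one drop the $|f(T^{n^g}x)|$ factor), and then invokes Bourgain's \emph{maximal inequality} to extend to $f\in L^2(\mu)$. You instead work directly in $L^2(\mu)$, retaining the factor $|f(T^{n^g}x)|$ in the error bound and controlling it by a second application of Bourgain's pointwise theorem to $|f|$. Your route is a little more direct and avoids naming the maximal inequality explicitly; the paper's route isolates the $L^\infty\to L^2$ passage as a separate, standard step.
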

\begin{proof}
  It is clear that if (s1') holds then also (s1) holds.

  It remains to show that condition (s2') implies that for $g\in
  \mathbb N\setminus B$ the sequence $(s_n^g)$ is good for the  pointwise
  ergodic theorem. We start by observing that condition (s2'), combined
  with Lemma~\ref{standard}, guarantees that for every system
  $(X,\mathcal{B},\mu,T)$  we have
  \begin{equation}\label{E:S(n)}
   \lim_{N\to\infty}\frac1N\sum_{n=1}^Nf_n(\phi,\psi) \cdot T^{n^g}f=0\quad\text{${\mu}$-almost everywhere for $f\in L^\infty(\mu)$.}
  \end{equation}
  Using the approximation argument of Lemma~\ref{L:mean1} and the fact $\frac1N\sum_{n=1}^Nf_n(\phi,\psi)\to 0$ (this follows from \eqref{E:S(n)}), we conclude that \eqref{E:S(n)}
  remains true if we replace $f_n(\phi,\psi)$ by $
  f_n(\indicator_{I_+}, \indicator_{I_-})-1/4=\indicator_S(n)-1/4$. As a consequence,
  we have
  \begin{equation}\label{E:S(n)'}
    \lim_{N\to\infty}\frac1N\sum_{n=1}^N(\indicator_S(n)-1/4)\cdot T^{n^g}f=0\quad\text{${\mu}$-almost everywhere for $f\in L^\infty(\mu)$.}
  \end{equation}
 Using Bourgain's maximal inequality (\cite{Bou1})
 %%{\bf (I am not sure if this or \cite{Bou2} is the right
 %% reference)}
  for the ergodic  averages $\frac1N\sum_{n=1}^NT^{n^g}f$, we can  to replace
  $L^{\infty}(\mu)$ by $L^2(\mu)$ in the preceding statement.
  Finally, using Bourgain's pointwise ergodic theorem (\cite{Bou1}, or
  see the Appendix~B of \cite{Be3} for a simpler proof) for the
  ergodic averages along $g$-th powers, we get that
$$
\lim_{N\to\infty}\frac1N\sum_{n=1}^N\indicator_S(n)\cdot
T^{n^g}f\quad\text{exists ${\mu}$-almost everywhere for  $f\in L^\infty(\mu)$}.
$$
Since the set $S$ has positive density (this follows by setting $f=1$ in
\eqref{E:S(n)'}), we conclude that
$$
\lim_{N\to\infty}\frac1N\sum_{n=1}^N T^{s_n^g}f\quad\text{exists
  ${\mu}$-almost everywhere for $f\in L^2(\mu)$}.
$$
Therefore, the sequence $(s_n^g)$ is good for the pointwise ergodic
theorem. This completes the proof.
\end{proof}
The rest of this section will be devoted to the construction of a
sequence $(s_n)$ that satisfies conditions (s1') and (s2').

\subsection{Definition of the sequence $(s_n)$}\label{SS:s_n3}
We remind the reader the set of integers $B$ is given and the irrational number $\alpha$ is the golden mean.
Let $I_+$ and $I_-$ be the intervals defined in \eqref{E:I}.  The
sequence $(s_n)$ consists of the elements of a set $S$, taken in
increasing order, that is defined as follows:
\begin{equation}\label{E:pointS}
  S=\bigcup_{j\ge 1}\left\{ n\in\N\mid [2^{2j-1}\leq n< 2^{2j}
    ,\, \
    n^{b_j}\alpha\in I_+]\text{ or }[2^{2j}\leq  n< 2^{2j+1},\, n^{b_j}\alpha\in I_- ]\right\}
\end{equation}
where the sequence of integers $(b_j)$ satisfies the following
conditions:
\begin{enumerate}
\item[(b1)] $b_j\in B$, and all elements of $B$ appear infinitely
  often in the sequence $(b_j)$.

\item[(b2)] We have
$$  \lim_{j\to\infty}\sup_{N>2^{2j-1}}\left|\frac1N\sum_{n=1}^N \indicator_{I_\pm}(n^{b_j}\alpha) -\frac14
\right|=0.$$

\item[(b3)] For every nonzero $m\in\Z$, and
  $g\in \N\setminus B$, we have
%%$$
%%\lim_{j\to\infty}\sup_{N\geq2^{2j-1}}\ \sup_{\beta\in\R}\
%% \sup_{g\in\N,\,g<b_j}
%% \left|\frac1N %%\sum_{n=1}^{N} e(l n^{b_j}\alpha + n^g\beta)\right|=0.
%%$$
$$
\lim_{j\to\infty}\sup_{N\geq2^{2j-1}} \sup_{\beta\in
  \R}\left|\frac1{N^{1-\eta(N)}} \sum_{n=1}^{N} e(m n^{b_j}\alpha +
  n^g\beta)\right|<+\infty
$$
where $\eta(N)=(\log_2(N))^{-1/2}$.
\end{enumerate}
In the next subsection we construct such a sequence $(b_j)$.

  \subsection{Construction of the sequence $(b_j)$}\label{SS:b_j3}
  The key ingredient in the construction is the following exponential
  sum estimate:
  \begin{proposition}\label{P:key}
    Let $\alpha$ be the golden mean, and
    $(\eta(N))$ be a sequence of real numbers which tends to zero at
    infinity. For every
    $b,g\in\N$ with $b\neq g$, and nonzero $m\in \Z$,   there exists
    $N_0=N_0(b,g,m)$, such that if $N>N_0$ then
$$
\sup_{\be\in\R}\left|\sum_{n=1}^{N} e(m n^b\al+n^g\be)\right|\leq
N^{1-\eta(N)}.
$$
\end{proposition}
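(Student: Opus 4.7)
Since $\eta(N) \to 0$ and the threshold $N_0$ may depend on $(b,g,m)$, it suffices to establish a genuine power saving
\[
\sup_{\beta\in\R}\Bigl|\sum_{n=1}^N e(mn^b\alpha+n^g\beta)\Bigr|\ll_{b,g,m} N^{1-c}
\]
for some $c=c(b,g)>0$; the desired bound $N^{1-\eta(N)}$ then follows for $N$ large enough. The plan is to apply Weyl's inequality to the polynomial $P(n):=mn^b\alpha+n^g\beta$, exploiting that the golden mean $\alpha$ is badly approximable: $\|k\alpha\|\geq 1/(3|k|)$ for every nonzero $k\in\Z$.

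\textbf{Case $b>g$.} Here $m\alpha$ is the leading coefficient of $P$. By Dirichlet at scale $Q=N^{b-1}$, pick coprime $a,q$ with $q\le Q$ and $|m\alpha-a/q|\le 1/(qQ)$; the bad approximability of $\alpha$ then forces $q\asymp N^{b-1}$. The standard Weyl inequality gives
\[
\Bigl|\sum_{n=1}^N e(P(n))\Bigr|\ll_{b,\epsilon} N^{1+\epsilon}(q^{-1}+N^{-1}+qN^{-b})^{1/2^{b-1}}\ll N^{1-1/2^{b-1}+\epsilon},
\]
uniformly in $\beta$.

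\textbf{Case $b<g$.} Now the leading coefficient $\beta$ is arbitrary, so Weyl does not apply directly. We decompose $\beta$ into major and minor arcs: for each $\beta$, write $\beta=a/q+\xi$ with coprime $a,q$, $q\le N^{g-1}$, and $|\xi|\le 1/(qN^{g-1})$, and fix a small $\delta>0$ to be chosen. On the minor arc ($q>N^\delta$) the standard Weyl inequality applied in $\beta$ gives $|\sum e(P(n))|\ll N^{1-\delta/2^{g-1}+\epsilon}$. On the major arc ($q\le N^\delta$), the identity $(qr+s)^g\equiv s^g\pmod q$ makes $e(an^g/q)$ a $q$-periodic factor, so splitting into residue classes $n\equiv s\pmod q$ yields
\[
\sum_{n=1}^N e(P(n))=\sum_{s=0}^{q-1}e(as^g/q)\sum_{r:\,qr+s\in[1,N]}e\bigl(m(qr+s)^b\alpha+(qr+s)^g\xi\bigr).
\]
Expanding the inner polynomial in $r$, one checks that its coefficient of $r^b$ equals $mq^b\alpha+O(q^{g-b-1}N^{-(g-1)})$, which for $q\le N^\delta$ is a negligible perturbation of $mq^b\alpha$ and thus remains badly approximable. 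A version of Weyl's inequality that extracts a power saving from the approximation of this (non-leading) coefficient produces $|\text{inner}|\ll(N/q)^{1-c'}$ for some $c'=c'(b,g)>0$; summing over the $q$ residue classes gives $|S(N,\beta)|\ll q^{c'}N^{1-c'}\le N^{1-c'(1-\delta)}$. Optimizing $\delta$ against the minor-arc bound produces the required uniform power saving.

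\textbf{Main obstacle.} The principal technical point is the major-arc estimate in Case $b<g$: naive Weyl differencing extracts savings only from the leading coefficient of the inner polynomial, whereas here the badly approximable coefficient sits at $r^b$, not at $r^g$. A refined differencing argument tailored to the $r^b$-coefficient (or, equivalently, an internal recursive major/minor arc decomposition inside each residue-class sum) is needed; both routes use the bad approximability of $\alpha$ and the smallness of $q$ on the major arc in an essential way.
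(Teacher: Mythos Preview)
Your overall architecture---power saving suffices, direct Weyl for $b>g$, and a major/minor arc dichotomy in $\beta$ for $b<g$---matches the paper's approach exactly. Your minor arc treatment and your Case~$b>g$ are both fine and essentially the same as the paper's Lemmas~\ref{gsb} and~\ref{bsg}.

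The genuine gap is precisely the one you flag as the ``main obstacle'': in the major arc of Case~$b<g$ you keep the full perturbation $(qr+s)^g\xi$ inside the exponential, which leaves a polynomial in $r$ of degree $g$ whose leading coefficient is $q^g\xi$, while the badly approximable quantity $mq^b\alpha$ sits at degree $b<g$. Standard Weyl differencing does not see a lower-order coefficient, and you do not actually produce the ``version of Weyl's inequality'' you invoke; your two proposed routes (refined differencing, recursive major/minor) are only named, not carried out, and neither is straightforward.

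The paper resolves this cleanly by a different device: \emph{partial summation}. On the major arc one first replaces $\beta$ by the rational $a/q$, so that the degree-$g$ term $e(an^g/q)$ is $q$-periodic and disappears after splitting into residue classes mod $q$; the inner sum then has leading term $m q^b\alpha\cdot r^b$, to which ordinary Weyl applies directly, yielding a uniform power saving for $\sum_{n\le M}e(mn^b\alpha+\tfrac{a}{q}n^g)$ over a range of $M$. One then recovers the original sum via Abel summation against the slowly varying weight $e\bigl(n^g(\beta-\tfrac{a}{q})\bigr)$, whose successive differences are $O(n^{g-1}|\beta-a/q|)=O(N^{-1+\delta})$ on the major arc. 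This avoids altogether the need for a Weyl-type inequality that tracks a non-leading coefficient, and is what you are missing.
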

\begin{proof}
  This is an immediate consequence of Lemmas~\ref{gsb} and \ref{bsg}
  in the Appendix.
\end{proof}

Using that $ \frac1N \sum_{n=1}^{N} \indicator_{I_\pm}(n^b\alpha) \to
\frac14$ for $b\in\N$, and Proposition~\ref{P:key}, the next result is
proved in essentially the same way as Proposition~\ref{construction}
(the argument is actually somewhat simpler in this case, since we
have already combined the estimates dealing with $b<g$ and $b>g$ into
a single estimate).
\begin{proposition}
  \label{sec:constr-sequ-b-j}
  There exists a sequence $(b_j)$ that satisfies conditions (b1),
  (b2), and (b3) of Section~\ref{SS:s_n3}.
\end{proposition}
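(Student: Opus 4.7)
I would follow the template of Proposition~\ref{construction} verbatim, with two exponential-sum ingredients feeding a standard diagonal construction. The new feature compared to Section~\ref{sectionmean} is that a single quantitative input (Proposition~\ref{P:key}) now covers what previously required two separate arguments.

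Concretely, Lemma~\ref{uniform} provides, for each $b\in\N$ and $\varepsilon>0$, a threshold $J_2(b,\varepsilon)$ such that
$$\sup_{N\geq 2^{2J-1}}\Bigl|\tfrac{1}{N}\sum_{n=1}^N\indicator_{I_\pm}(n^b\alpha)-\tfrac14\Bigr|\leq\varepsilon$$
whenever $J\geq J_2(b,\varepsilon)$. Proposition~\ref{P:key}, applied with $b\in B$ and $g\in\N\setminus B$ (so automatically $b\neq g$), provides a threshold $J_3(b,g,m)$ such that
$$\sup_{N\geq 2^{2J-1}}\sup_{\beta\in\R}\Bigl|\tfrac{1}{N^{1-\eta(N)}}\sum_{n=1}^N e(mn^b\alpha + n^g\beta)\Bigr|\leq 1$$
whenever $J\geq J_3(b,g,m)$. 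I would assume both thresholds monotone (nondecreasing in $b$ and $|m|$, nonincreasing in $\varepsilon$), which is harmless.

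Enumerate $B=\{a_1,a_2,\dots\}$ with $a_1<a_2<\dots$ and the countable parameter set $(\N\setminus B)\times(\Z\setminus\{0\})=\{(g_r,m_r):r\geq 1\}$. Set $J_1=J_2(a_1,1)$ and, inductively for $t\geq 2$,
$$J_t=\max\Bigl\{J_{t-1}+t,\; J_2(a_t,1/t),\; \max_{1\leq k,r\leq t}J_3(a_k,g_r,m_r)\Bigr\}.$$
Specify $b_{J_t}=a_t,\;b_{J_t+1}=a_{t-1},\dots,b_{J_t+t-1}=a_1$ and fill all unspecified positions with $a_1$. Condition (b1) is immediate. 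For (b2): if $J_t\leq j<J_{t+1}$ then $b_j\in\{a_1,\dots,a_t\}$ and $j\geq J_2(b_j,1/t)$ by monotonicity, yielding an error bounded by $1/t$, which tends to $0$. For (b3): fix $(g,m)=(g_r,m_r)$ and note that for every $j\geq J_r$, we have $j\geq J_t\geq J_3(b_j,g,m)$ for the $t\geq r$ with $J_t\leq j<J_{t+1}$, so the inner supremum is at most $1$; hence the limsup in $j$ is finite.

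The only point where a subtlety could arise is (b3): in the mean-convergence analogue one had to supplement the van der Corput estimate (valid for $b_j>g$) with a Weyl equidistribution argument covering the finitely many $j$ with $b_j<g$. Here Proposition~\ref{P:key} is symmetric in the relative sizes of $b$ and $g$, so no supplementary argument is needed — which is precisely the simplification the authors anticipate in their preamble to the proposition.
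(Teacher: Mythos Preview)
Your proposal is correct and follows exactly the approach the paper indicates: it reproduces the diagonal construction of Proposition~\ref{construction}, replacing the qualitative input of Lemma~\ref{uniform} by the quantitative Proposition~\ref{P:key}, and your explicit enumeration of the parameter pairs $(g,m)$ together with the max in the definition of $J_t$ is an acceptable variant of the paper's bookkeeping. You have also correctly identified the simplification the paper flags---because Proposition~\ref{P:key} handles $b<g$ and $b>g$ uniformly, the separate Weyl argument for the finitely many small $b_j$ is no longer required.
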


\subsection{The sequence $(s_n)$ satisfies conditions (s1) and (s2)}
The goal of this section is to prove the following proposition, that
allows us to immediately deduce Theorem~\ref{T:main}.
\begin{proposition}\label{P:s1s2}
  Suppose that the sequence $(b_j)$ satisfies conditions (b1), (b2),
  and (b3) of Section~\ref{SS:s_n3}. Then the sequence $(s_n)$ defined
  by \eqref{E:pointS} satisfies conditions (s1) and (s2) of
  Theorem~\ref{T:pointwise}.
\end{proposition}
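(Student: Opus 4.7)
The plan is to verify the hypotheses of Proposition~\ref{P:pointwise}, namely conditions (s1$'$) and (s2$'$). Once these are in hand, Proposition~\ref{P:pointwise} delivers (s1) and (s2) of Theorem~\ref{T:pointwise} directly, so no further argument is needed beyond establishing those two intermediate statements.

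For (s1$'$), the argument should essentially mirror the proof of Proposition~\ref{P:1} in Section~\ref{sectionmean}. Using (b1), fix $b\in B$ and select arbitrarily large $j$ with $b_j=b$. Condition (b2), together with Lemma~\ref{density} (whose proof adapts to the current setting since the set $S$ defined in \eqref{E:pointS} has the same dyadic/indicator structure as in Section~\ref{sectionmean}), shows that $\indicator_S(n)$ is biased toward $I_+$ on $[2^{2j-1},2^{2j})$ and toward $I_-$ on $[2^{2j},2^{2j+1})$, each with density close to $1/4$. Comparing $\frac1{2^{2j}}\sum_{n\leq 2^{2j}}\indicator_S(n)\cos(2\pi n^b\alpha)$ with the analogous average at scale $2^{2j+1}$ produces a uniform positive gap as $j\to\infty$ along this subsequence, forcing divergence of the averages in (s1$'$).

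For (s2$'$), fix trigonometric polynomials $\phi=\sum_{m\ne 0}c_m e(m\cdot)$ and $\psi=\sum_{m\ne 0}d_m e(m\cdot)$ (finite sums, zero integral), a real $\gamma>1$, and $g\in\N\setminus B$. Write $N_k=[\gamma^k]$ and decompose
\[
\sum_{n=1}^{N_k} f_n(\phi,\psi)\,e(n^g\beta)
\]
according to the dyadic blocks $[2^{2j'-1},2^{2j'})$ and $[2^{2j'},2^{2j'+1})$. On each block with parameter $b_{j'}$, expanding $\phi$ or $\psi$ reduces the block sum to a linear combination of sums $\sum_{n=A}^{B} e(m n^{b_{j'}}\alpha+n^g\beta)$ with $A\geq 2^{2j'-1}$ and $B\leq 2^{2j'+1}$. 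Writing each as $\sum_{n=1}^B-\sum_{n=1}^{A-1}$ and applying condition (b3) (which is exactly the $L^\infty$-in-$\beta$ estimate of size $N^{1-\eta(N)}$ for $N\geq 2^{2j'-1}$) bounds the block sum by $O(2^{2j'(1-\eta(2^{2j'}))})$, uniformly in $\beta$. Since $\eta(N)=(\log_2 N)^{-1/2}$ is decreasing, summing the block bounds over $j'\leq j$, where $N_k\in[2^{2j-1},2^{2j+1})$, yields a geometric series dominated by its last term, giving
\[
\sup_{\beta\in\R}\left|\sum_{n=1}^{N_k} f_n(\phi,\psi)\,e(n^g\beta)\right|=O\bigl(N_k^{1-\eta(N_k)}\bigr).
\]
Dividing by $N_k$ gives $O(N_k^{-\eta(N_k)})=O(\exp(-c\sqrt{k}))$ for some $c>0$ (since $\eta(N_k)\log N_k\asymp\sqrt{k}$), and the series $\sum_k \exp(-c\sqrt k)$ converges, establishing (s2$'$).

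The main technical wrinkle will be the lower endpoint $\sum_{n=1}^{A-1}$ with parameter $b_{j'}$ when $A=2^{2j'-1}$, since $A-1$ sits one below the threshold in (b3); this is handled either by observing that a single boundary term contributes $O(1)$, or by noting that the construction of $(b_j)$ in Section~\ref{SS:b_j3} (via Proposition~\ref{P:key}) secures the required estimate at $N=2^{2j'-1}-1$ as well. Beyond this bookkeeping, the delicate point is the interaction between the decomposition into dyadic blocks with differing parameters $b_{j'}$ and the single exponential-sum bound of Proposition~\ref{P:key}; the saving grace is that the geometric growth of the block lengths ensures the last block dominates and no loss accumulates. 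Combining (s1$'$) and (s2$'$) and invoking Proposition~\ref{P:pointwise} completes the proof and thus Theorem~\ref{T:main}.
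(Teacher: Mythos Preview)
Your proposal is correct and follows essentially the same route as the paper. The paper reduces to (s1$'$) and (s2$'$) via Proposition~\ref{P:pointwise}, invokes Proposition~\ref{P:1} for (s1$'$), and for (s2$'$) packages your dyadic-block computation as Lemma~\ref{L:b3}, with the summability of the resulting bounds recorded separately as properties ($\eta$1)--($\eta$3) of $\eta(N)=(\log_2 N)^{-1/2}$; your ``geometric series dominated by its last term'' is exactly the content of ($\eta$3).
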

Before starting the proof of Proposition~\ref{P:s1s2}, let us gather
some useful properties that the sequence $\eta(N)=(\log_2(N))^{-1/2}$
satisfies:
\begin{itemize}
  %% \item[($\eta$1)] $\lim_{N\to\infty}\eta(N)=0$;
\item[($\eta$1)] The sequence $\left(N^{-\eta(N)}\right)$ is
  decreasing.
\item[($\eta$2)] For every $\gamma>1$, we have
  $\displaystyle\sum_{k\geq0}\left[\gamma^k\right]^{-\eta\left(\left[\gamma^k\right]\right)}<+\infty$.
\item[($\eta$3)] If we define
$$
\rho(N)=\frac1N\sum_{i=1}^{\left[\log_2N\right]}2^{i(1-\eta(2^i))},
$$
then for every $\gamma>1$, we have
$\displaystyle\sum_k\rho\left(\left[\gamma^k\right]\right)<+\infty$.
\end{itemize}

The first property is obvious.

To check the second property notice that
$\eta\left(\left[\gamma^k\right]\right)\sim c/\sqrt{k}$, so
$\left[\gamma^k\right]^{-\eta\left(\left[\gamma^k\right]\right)}=O\left(\gamma^{-c\sqrt{k}}\right)$
for some constant $c>0$.

To check the third property notice
that $$\sum_{i=1}^{l}2^{i(1-\eta(2^i))}=\sum_{i=1}^l2^{i-\sqrt{i}}=\sum_{i=1}^{[l/2]-1}2^{i-\sqrt{i}}+\sum_{i=[l/2]}^l2^{i-\sqrt{i}}\leq2^{l/2}+2^{-\sqrt{l/2}}2^{l+1},
$$
hence
$$
\sum_{i=1}^{l}2^{i(1-\eta(2^i))}\leq2^{l-\sqrt{l/2}+2}.
$$
It follows that
$$
\rho\left(\left[\gamma^k\right]\right)\leq\frac1{\left[\gamma^k\right]}2^{\log_2\left(\left[\gamma^k\right]\right)-\sqrt{\log_2\left(\left[\gamma^k\right]\right)/2}+2
}=O\left(2^{-\sqrt{k(\log_2{\gamma})/2}}\right),
$$
which implies ($\eta$3).

%%Given a sequence of positive integers $(b_j)$ and functions
%%$\phi,\psi\colon \T \to \mathbb{C}$, we remind the reader that the
%%sequence $(f_n(\phi,\psi))$ is given by
%%\begin{equation}\label{alternance3}
%%  f_n(\phi,\psi)=
%%  \begin{cases}
 %%   \phi(n^{b_j}\alpha), \quad n\in [2^{2j-1}, 2^{2j}); \\
 %%   \psi(n^{b_j}\alpha), \quad n\in [2^{2j}, 2^{2j+1}).
 %% \end{cases}
%%\end{equation}

\begin{lemma}\label{L:b3}
  Suppose that the sequence $(b_j)$ satisfies condition (b3) of
  Section~\ref{SS:s_n3}.  If $\phi$ and $\psi$ are two trigonometric
  polynomials with zero integral, let $(f_n(\phi,\psi))$ be the
  sequence defined by \eqref{alternance}.  Then for every $\gamma>1$ and for every
  $g\in \N\setminus B$,
  we have
 $$
 \sum_{k\geq0}\sup_{\beta\in\R}\left|\frac1{\left[\gamma^k\right]}\sum_{n=1}^{\left[\gamma^k\right]}
   f_n(\phi,\psi)\ \! e(n^g\beta)\right|<+\infty.
 $$
\end{lemma}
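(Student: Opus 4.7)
The plan is to first use linearity to reduce the sum $\sum_{n=1}^N f_n(\phi,\psi)\,e(n^g\beta)$ to finitely many ``pure'' sums, one for each character $e(m\,\cdot)$ appearing in $\phi$ or in $\psi$. Since $\phi$ and $\psi$ have zero integral, every such character has nonzero frequency $m \in \Z$. Hence it suffices to bound, for each nonzero $m$, sums of the form
$$\Sigma_{j}(A,B;\beta)=\sum_{n=A}^B e(m n^{b_j}\alpha+n^g\beta)$$
where $[A,B]$ is a subinterval of a single block $[2^{2j-1},2^{2j})$ or $[2^{2j},2^{2j+1})$ on which $f_n$ uses the fixed exponent $b_j$.

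The central exponential-sum estimate is furnished directly by condition (b3): there exist $j_0=j_0(m,g)$ and $C=C(m,g)$ such that for every $j\geq j_0$ and every $N\geq 2^{2j-1}$,
$$\sup_{\beta\in\R}\Big|\sum_{n=1}^N e(m n^{b_j}\alpha+n^g\beta)\Big|\leq C\,N^{1-\eta(N)}.$$
For any block-restricted sum $\Sigma_j(A,B;\beta)$ with $2^{2j-1}\leq A\leq B<2^{2j+1}$ and $j\geq j_0$, I would write $\Sigma_j(A,B;\beta)=\sum_{n=1}^B-\sum_{n=1}^A+e(m A^{b_j}\alpha+A^g\beta)$, so that both truncated sums run from $1$ up to an index which is at least $2^{2j-1}$ and (b3) applies to each; this gives the block bound $\sup_\beta|\Sigma_j(A,B;\beta)|\leq 2C(2^{2j+1})^{1-\eta(2^{2j+1})}+1$. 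The finitely many blocks with $j<j_0$ contribute only an additive constant via the trivial bound by block length.

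Given $N$, locate $J$ with $2^{2J-1}\leq N<2^{2J+1}$ and decompose $\sum_{n=1}^N f_n(\phi,\psi)\,e(n^g\beta)$ as complete blocks for $j<J$ plus one or two partial pieces at the end, each of which is also handled by the same block bound. Summing yields
$$\sup_{\beta\in\R}\Big|\sum_{n=1}^N f_n(\phi,\psi)\,e(n^g\beta)\Big|\leq O(1)+C'\sum_{i=1}^{[\log_2 N]} 2^{i(1-\eta(2^i))}=O(1)+C'\,N\,\rho(N).$$
Dividing by $N=[\gamma^k]$ gives $O(1/[\gamma^k])+C'\rho([\gamma^k])$. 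Summing over $k$ and invoking properties $(\eta 2)$ and $(\eta 3)$ of the sequence $\eta$ then gives the asserted convergence.

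The main obstacle is the boundary mismatch between the hypothesis and the decomposition: (b3) controls only partial sums $\sum_{n=1}^N$ with $N\geq 2^{2j-1}$, whereas the block decomposition naturally produces sums starting at $A=2^{2j-1}$ and running to $B$ inside a block. The shift trick that rewrites $\sum_{n=1}^{A-1}$ as $\sum_{n=1}^A$ minus a single unit-modulus term is precisely what makes (b3) applicable at both endpoints, so that the geometric decay $N^{-\eta(N)}$ on logarithmic scales survives intact and is summable along any lacunary sequence $[\gamma^k]$.
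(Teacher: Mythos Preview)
Your proof is correct and follows essentially the same approach as the paper's: reduce by linearity to individual characters, use (b3) to bound $\sum_{n=1}^N e(m n^{b_j}\alpha + n^g\beta)$ by $CN^{1-\eta(N)}$, express each dyadic block as a difference of two such partial sums, and then sum via property~$(\eta3)$. The only cosmetic difference is that the paper isolates the final partial block and bounds it directly using $(\eta1)$ and $(\eta2)$, whereas you absorb it into the same $\rho(N)$ estimate.
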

\begin{proof}
  For brevity we shall write $f_n$ instead of $f_n(\phi,\psi)$.  Since
  $(b_j)$ satisfies condition (b3), there exists a positive constant
  $C=C(\phi,\psi,g)$ such that for every
  large enough $j$ and  $N\geq2^{2j-1}$, we have
  \begin{equation}\label{E:xi}
    \sup_{\beta\in\R}\left|\sum_{n=1}^N\xi(n^{b_j}\alpha)\ \! e\left(n^g\beta\right)\right|\leq C N^{1-\eta(N)},
  \end{equation}
  where $\xi$ is either $\phi $ or $\psi$.  We shall use this estimate
  to find an upper bound for the averages $$
  \frac{1}{N}\sum_{n=1}^Nf_n \ \! e(n^g\beta).
 $$
 %% The number $g$ is fixed in $\N\setminus B$, and we look for an
 %% estimate uniform in $\beta$.
 We start by noticing that
$$
\frac1{2^{2j}}\sum_{n=2^{2j}+1}^{2^{2j+1}}f_n \ \! e(n^g\beta) =
2\frac1{2^{2j+1}}\sum_{n=1}^{2^{2j+1}}\psi(n^{b_j}\alpha)\ \! e(n^g\beta)-
\frac1{2^{2j}}\sum_{n=1}^{2^{2j}}\psi(n^{b_j}\alpha)\ \! e(n^g\beta).
  $$
  We also get a similar estimate with $2j+1$ is in place of $2j$ and
  $\phi$ in place of $\psi$.  It follows from \eqref{E:xi} that there
  exists $j_0\geq0$ (depending on $\phi$, $\psi$ and $g$), such that
  for every $j\geq j_0$, we have
  \begin{equation}\label{E:f_n}
    \sup_{\beta\in\R} \left|\frac1{2^j}\sum_{n=2^j+1}^{2^{j+1}}f_n\ \! e(n^g\beta) \right|\leq C\,  (2^j)^{-\eta(2^j)}.
  \end{equation}
  Now consider a large enough integer $N$, then $N\in(2^j,2^{j+1}]$
  for some $j\geq j_0$. We split the sum between 1 and $N$ into
  several pieces
$$
\sum_{n=1}^{N}\cdot=\sum_{n=1}^{2^{j_0}}\cdot+\sum_{i=j_0}^{j-1}\sum_{n=2^i+1}^{2^{i+1}}\cdot+\sum_{n=2^j+1}^{N}\cdot,
$$
and we get the following estimate
\begin{equation}\label{E:e1}
  \left|\frac1N\sum_{n=1}^{N}f_n\ \! e(n^g\beta)\right| \leq C\frac{2^{2j_0}}{N}+\frac1N\sum_{i=j_0}^{j-1}\left|\sum_{n=2^i+1}^{2^{i+1}}f_n\ \! e(n^g\beta)\right|+\left|\frac1N\sum_{n=2^j+1}^{N}f_n\ \! e(n^g\beta)\right|.
\end{equation}
By \eqref{E:f_n} we have
\begin{equation}\label{E:e2}
  \frac1N\sum_{i=j_0}^{j-1}\left|\sum_{n=2^i+1}^{2^{i+1}}f_n\ \! e(n^g\beta)\right|\leq C\frac1N\sum_{i=1}^{[\log_2 N]}2^{i(1-\eta(2^i))}.
\end{equation}
Moreover, since
$$
\left|\frac1N\sum_{n=2^j+1}^{N}f_n\ \! e(n^g\beta)\right|\leq\left|\frac1N\sum_{n=1}^{N}\xi(n^{b_j}\alpha)\ \!
  e(n^g\beta)\right|+\left|\frac1{2^j}\sum_{n=1}^{2^j}\xi(n^{b_j}\alpha)\ \!
  e(n^g\beta)\right|,
 $$
 where $\xi$ is either $\phi $ or $\psi$, we get using \eqref{E:xi}
 and property $(\eta1)$ that
 \begin{equation}\label{E:e3}
   \left|\frac1N\sum_{n=2^j+1}^{N}f_n\ \! e(n^g\beta)\right|\leq C \left(N^{-\eta(N)}+ (2^j)^{-\eta(2^j)}\right)\leq 2C \, (N/2)^{-\eta(N/2)}.
 \end{equation}
 Combining equations~\eqref{E:e1}, \eqref{E:e2}, and \eqref{E:e3}, and
 using properties $(\eta1)$, $(\eta2)$, and $(\eta3)$, we get the
 advertised result.
\end{proof}
%% \begin{lemma}\label{L:b3'}
%%   Let $\phi_0=\indicator_{I_+}-\frac14$ and
%%   $\psi_0=\indicator_{I_-}-\frac14$. Then Lemma~\ref{L:b3} remains
%%   true with the sequence $(f_n(\phi_0,\psi_0))$ in place of the
%%   sequence $(f_n(\phi,\psi))$.
%% \end{lemma}
%% \begin{proof}
\begin{proof}[Proof of Proposition~\ref{P:s1s2}]
 Let $(s_n)$  be the sequence  defined by
  \eqref{E:pointS}.
   By Proposition~\ref{P:pointwise}, it suffices to
  verify properties (s1') and (s2') mentioned there.  Using
  Proposition~\ref{P:1}, we see that properties (b1) and (b2) give
  property (s1').  Also, using Lemma~\ref{L:b3}, we see that property
  (b3) gives property (s2'). This completes the proof.
\end{proof}

\section{Appendix}
We prove some results that were used in the main part of the article.
\subsection{Van der Corput's lemma} \label{SS:VDC} The following is a Hilbert space
version of a classical elementary estimate of van der Corput. It appears in
a form similar to the one stated below in \cite{Be1}.
\begin{lemma} \label{L:VDC} Let $v_1,\ldots, v_N$ be vectors
  of a Hilbert space with $\|v _i\| \leq 1$ for $i=1,\ldots,
  N$. Then for every integer  $H$ between $1$ and $N$ we have
$$
\norm{\frac{1}{N}\sum_{n=1}^N v_n}{}{2}\leq \frac{2}{H}+
\frac{4}{H}\sum_{h=1}^{H-1}\Big|\frac{1}{N} \sum_{n=1}^{N-h}\langle
v_{n+h},v_n\rangle\Big|.
$$
\end{lemma}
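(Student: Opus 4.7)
The plan is the standard telescoping plus Cauchy--Schwarz argument for a Hilbert space version of van der Corput's lemma. Extend $v_n$ to all integers by setting $v_n = 0$ for $n \notin [1,N]$, and introduce the shifted block sums $w_n = \sum_{h=0}^{H-1} v_{n+h}$ for $n$ in the enlarged range $[2-H, N]$, which consists of $N+H-1$ integers. A direct swap of the order of summation yields the telescoping identity
$$
\sum_{n=2-H}^{N} w_n \;=\; H \sum_{n=1}^{N} v_n,
$$
because for each fixed $h \in [0, H-1]$ the shifted range $[2-H+h,\, N+h]$ contains $[1,N]$, so the inner sum $\sum_n v_{n+h}$ collects the nonzero terms $v_1,\dots,v_N$ exactly once.

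Next I would apply the Cauchy--Schwarz inequality to the left-hand side above, treating it as a sum of $N+H-1$ vectors, to obtain
$$
H^{2}\,\Big\|\sum_{n=1}^{N} v_n\Big\|^{2} \;\leq\; (N+H-1) \sum_{n=2-H}^{N} \|w_n\|^{2}.
$$
The heart of the proof is then to expand $\|w_n\|^{2}$ as a double sum over $h_1, h_2 \in [0, H-1]$ of inner products $\langle v_{n+h_1}, v_{n+h_2}\rangle$ and to sum over $n$. After the change of index $m = n + \min(h_1, h_2)$, the diagonal contribution $h_1 = h_2$ is at most $H N$ (using the hypothesis $\|v_m\| \leq 1$), while for each $h \in \{1,\dots,H-1\}$ there are exactly $H - h$ off-diagonal pairs $(h_1, h_2)$ with $|h_2 - h_1| = h$, each contributing $\langle v_m, v_{m+h}\rangle$ (or its complex conjugate) summed over $m \in [1, N-h]$. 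This yields
$$
\sum_{n=2-H}^{N} \|w_n\|^{2} \;\leq\; H N + 2 \sum_{h=1}^{H-1} (H-h) \Big|\sum_{m=1}^{N-h} \langle v_{m+h}, v_m\rangle\Big|.
$$

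Combining the two displays and dividing by $H^{2} N^{2}$, I would finish by using the elementary estimates $H - h \leq H$ and $N + H - 1 \leq 2N$ (both valid since $1 \leq H \leq N$) to arrive at exactly the inequality in the statement. The only step that requires any real care is the multiplicity count in the previous paragraph: it is precisely the fact that the $h$-th off-diagonal correlation appears with weight $H - h$ (and not with weight of order $H^{2}$) that cancels one factor of $H$ against the $H^{2}$ in the denominator and produces the advertised coefficient $4/H$ in front of the correlation sums. Everything else reduces to bookkeeping of boundary effects that are safely absorbed by the constants $2$ and $4$.
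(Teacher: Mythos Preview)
Your argument is correct: the telescoping identity, the Cauchy--Schwarz step over the $N+H-1$ shifted sums, the expansion of $\sum_n\|w_n\|^2$ with the $(H-h)$ multiplicity on the off-diagonal correlations, and the final clean-up via $H-h\le H$ and $N+H-1\le 2N$ all go through exactly as you describe and yield the stated inequality with the constants $2$ and $4$.

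There is nothing to compare with, however, because the paper does not prove this lemma at all; it is stated without proof and attributed to Bergelson~\cite{Be1}. What you have written is precisely the standard proof one finds in that reference (and in Kuipers--Niederreiter for the scalar case), so your proposal is in full agreement with the intended argument.
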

An immediate corollary of the preceding lemma is the following:
\begin{corollary}\label{C:VDC}
  Let $v_1,\ldots, v_N$ be vectors of a Hilbert space with $\|v _i\|\leq 1$ for $i=1,\ldots, N$.  Then for every integer  $H$ between $1$ and $N$ we have
$$
\norm{\frac{1}{N}\sum_{n=1}^N v_n}{}{2}\leq
\frac{4}{H}\sum_{h=1}^{H}\Big|\frac{1}{N} \sum_{n=1}^{N}\langle
v_{n+h},v_n\rangle\Big| +o_{N,H,H\prec N}(1),
$$
where $o_{N,HH\prec N}(1)$ denotes a quantity that goes to zero as
$N, H \to \infty $ in a way that $H/N\to 0$.
\end{corollary}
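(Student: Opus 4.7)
The plan is to start directly from Lemma~\ref{L:VDC} and absorb the small error terms into the $o_{N,H,H\prec N}(1)$ remainder. First, to make sense of the inner products $\langle v_{n+h},v_n\rangle$ when $n+h>N$, I extend the sequence by setting $v_{N+1}=v_{N+2}=\cdots=v_{N+H}=0$; this preserves the bound $\|v_i\|\leq 1$ and does not change $\sum_{n=1}^N v_n$.

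Next, I compare the two expressions term by term. Lemma~\ref{L:VDC} gives
\[
\Big\|\frac{1}{N}\sum_{n=1}^N v_n\Big\|^{2}\leq \frac{2}{H}+\frac{4}{H}\sum_{h=1}^{H-1}\Big|\frac{1}{N}\sum_{n=1}^{N-h}\langle v_{n+h},v_n\rangle\Big|.
\]
For each $1\leq h\leq H-1$, the triangle inequality and Cauchy-Schwarz (together with $\|v_i\|\leq 1$) yield
\[
\Big|\frac{1}{N}\sum_{n=1}^{N-h}\langle v_{n+h},v_n\rangle\Big|\leq \Big|\frac{1}{N}\sum_{n=1}^{N}\langle v_{n+h},v_n\rangle\Big|+\frac{h}{N},
\]
since the missing/extra block has at most $h\leq H$ terms, each of modulus at most $1$.

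Summing these estimates and also allowing the (non-negative) $h=H$ term to be added on the right, I obtain
\[
\Big\|\frac{1}{N}\sum_{n=1}^N v_n\Big\|^{2}\leq \frac{4}{H}\sum_{h=1}^{H}\Big|\frac{1}{N}\sum_{n=1}^{N}\langle v_{n+h},v_n\rangle\Big|+\frac{2}{H}+\frac{4}{H}\sum_{h=1}^{H-1}\frac{h}{N}.
\]
The error is bounded by $\frac{2}{H}+\frac{2H}{N}$, and both $1/H$ and $H/N$ tend to $0$ under the regime $N,H\to\infty$ with $H/N\to 0$, so the error is $o_{N,H,H\prec N}(1)$. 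This establishes the corollary.

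The argument is entirely routine; the only point requiring a moment's thought is the convention for $v_{n+h}$ when $n+h>N$, which is easily resolved by extending trivially. No real obstacle is expected.
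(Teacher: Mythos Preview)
Your proof is correct and is exactly the routine verification the paper has in mind when it says the corollary is immediate from Lemma~\ref{L:VDC}: absorb $2/H$ into the error, extend the range of $n$ from $N-h$ to $N$ at cost $O(H/N)$, and append the nonnegative $h=H$ term. The only cosmetic remark is that in the paper's applications the vectors $v_n$ are already defined for all $n$ with $\|v_n\|\le 1$, so the zero-extension convention is unnecessary there, but your argument covers both situations since you only use $|\langle v_{n+h},v_n\rangle|\le 1$.
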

\subsection{Dyadic intervals}
The next lemma allows us, under suitable assumptions,  to concatenate dyadic pieces of sequences and create a new sequence with average zero.
\begin{lemma}\label{L:Wierdl}
  Let $(u_{n,j})_{n,j\in\N}$ be a family of complex numbers that satisfy
$$
\lim_{j\to\infty} \sup_{N>2^{j}}\left| \frac{1}{N} \sum_{n=1}^N
  u_{n,j}\right|=0.
$$
Define the sequence $(u_n)$  by
$$
u_n=u_{n,j} \ \text{ if } \ 2^{j}\leq n<2^{j+1}.
$$
Then
$$
\lim_{N\to\infty}\frac{1}{N}\sum_{n=1}^N u_n=0.
$$
\end{lemma}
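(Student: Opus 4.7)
My plan is to decompose the partial sum $\sum_{n=1}^{N}u_n$ along the dyadic intervals $[2^j,2^{j+1})$ on which the sequence $(u_n)$ is piecewise defined, and to bound each resulting block by appealing to the uniform estimate supplied by the hypothesis.

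First I will fix $\varepsilon>0$ and, using the hypothesis, choose $J\in\N$ so large that $|S_j(N)|\leq\varepsilon N$ for every $j\geq J$ and every $N>2^j$, where $S_j(N):=\sum_{n=1}^{N}u_{n,j}$. For a large integer $N$, let $K$ be the unique index with $2^K\leq N<2^{K+1}$, and decompose
\[ \sum_{n=1}^{N}u_n \;=\; A\;+\;\sum_{j=J}^{K-1}B_j\;+\;T_K, \]
where $A:=\sum_{n=1}^{2^J-1}u_n$ is a fixed constant, $B_j:=\sum_{n=2^j}^{2^{j+1}-1}u_{n,j}=S_j(2^{j+1}-1)-S_j(2^j-1)$, and $T_K:=S_K(N)-S_K(2^K-1)$. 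The contribution of $A$ to $\frac{1}{N}\sum_{n=1}^N u_n$ tends to $0$, so I may concentrate on the other two terms.

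Next I will control the blocks. The quantity $S_j(2^{j+1}-1)$ falls directly within the range where the hypothesis applies and thus satisfies $|S_j(2^{j+1}-1)|\leq\varepsilon\cdot 2^{j+1}$. For the other piece, which sits just below the threshold where the hypothesis kicks in, I will write $S_j(2^j-1)=S_j(2^j+1)-u_{2^j,j}-u_{2^j+1,j}$ and apply the hypothesis at the admissible point $N'=2^j+1>2^j$ to get $|S_j(2^j+1)|\leq\varepsilon(2^j+1)$. An analogous argument handles $T_K$ via $|S_K(N)|\leq\varepsilon N$ and the same trick for $S_K(2^K-1)$. Summing the geometric progression $\sum_{j=J}^{K-1}2^{j+1}\leq 2^{K+1}\leq 2N$ produces a contribution of size $O(\varepsilon)$ in $\frac{1}{N}\sum_{n=1}^N u_n$.

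The only real obstacle is the residual boundary contribution $|u_{2^j,j}|+|u_{2^j+1,j}|$ produced at each dyadic break, which is not directly controlled by the hypothesis. In every use of this lemma in the paper (Lemmas~\ref{density} and \ref{L:mean}) the entries $u_{n,j}$ are uniformly bounded in modulus (being indicators or unit complex exponentials), and under such a uniform bound the total boundary contribution is $O(K)=O(\log N)$, which is negligible after division by $N$. Combining all the estimates, one obtains $\big|\frac{1}{N}\sum_{n=1}^{N}u_n\big|\leq C\varepsilon+o_{N\to\infty}(1)$ for an absolute constant $C$; since $\varepsilon>0$ was arbitrary, the limit is $0$, which is the desired conclusion.
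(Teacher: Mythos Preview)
Your approach is essentially the paper's: decompose $\sum_{n=1}^N u_n$ into an initial segment, full dyadic blocks, and a tail, and bound each dyadic block by writing it as a difference of two partial sums $S_j(\cdot)$ controlled by the hypothesis. The paper phrases the block bound as the identity
\[
\frac{1}{2^{j}}\sum_{n=2^{j}+1}^{2^{j+1}} u_n
= 2\cdot\frac{1}{2^{j+1}}\sum_{n=1}^{2^{j+1}}u_{n,j}
- \frac{1}{2^{j}}\sum_{n=1}^{2^{j}}u_{n,j}
\]
and concludes $\bigl|\frac{1}{2^{j}}\sum_{n\in\text{block}} u_n\bigr|\le 3\varepsilon(j)$, then sums the geometric series exactly as you do.

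The one point where you diverge is the endpoint issue: you need $S_j(2^j-1)$ (equivalently $S_j(2^j)$), while the hypothesis literally covers only $N>2^j$. You patch this with the uniform bound $|u_{n,j}|\le M$, incurring an $O(\log N)$ boundary error. The paper's own proof simply applies the hypothesis at $N=2^j$, tacitly reading ``$>$'' as ``$\ge$''. Your observation is well taken: as stated with a strict inequality and no boundedness, the lemma is actually false (take $u_{n,j}=-2^j/(2^j-1)$ for $n<2^j$, $u_{2^j,j}=2^j$, and $u_{n,j}=0$ for $n>2^j$; then $S_j(N)=0$ for all $N>2^j$ but $\frac{1}{N}\sum u_n\to 2$). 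Since every application in the paper has $|u_{n,j}|\le 1$, your added assumption is harmless and your proof is correct. If you prefer not to add a hypothesis, you can instead read the assumption as $N\ge 2^j$ and drop the boundary workaround entirely, which is what the paper effectively does.
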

\begin{proof}
  % We use here a slightly different notation for the definition of
  % the set $S$. We define $c_{2j-1}=c_{2j}=b_j$, and the set $S$ is
  % defined by the condition $$n\in S \Longleftrightarrow [2^j\leq
  % n<2^{j+1},\,n^{c_j}\alpha\in I_\pm],$$ the choice between $I_+$
  % and $I_-$ depending on the parity of $j$.

  % We denote by $j_0$ a positive integer such that, for all $j\geq
  % j_0$ and for all $N\ge 2^j$, the property (\ref{c1}) is satisfied.
  Let $\varepsilon>0$.  We define
$$
\varepsilon(j)=\sup_{N>2^{j}}\left|\frac1N\sum_{n=1}^N u_{n,j}
\right|
$$
and
$$
\varepsilon'(j)=\sup_{k\geq j} \varepsilon(k).
$$
By our assumption, there exists $j_0$ such that
$\varepsilon'(j)<\varepsilon$ for every $j\geq j_0$.

We start by noticing that
  $$
  \frac1{2^{j}}\sum_{n=2^{j}+1}^{2^{j+1}} u_n = 2\,
  \frac1{2^{j+1}}\sum_{n=1}^{2^{j+1}}u_{n,j}-
  \frac1{2^{j}}\sum_{n=1}^{2^{j}}u_{n,j}.
  $$
   Therefore,  for every
  $j\in\N$ we have
  \begin{equation}\label{c2}
    \left|\frac1{2^j}\sum_{n=2^j+1}^{2^{j+1}}u_n\right|\leq 3\, \varepsilon(j).
  \end{equation}
  Now suppose that $N>2^{j_0}$, then $N\in(2^j,2^{j+1}]$ for some
  $j\geq j_0$. We split the sum between 1 and $N$ into several pieces
$$
\sum_{n=1}^{N}\cdot=\sum_{n=1}^{2^{j_0}}\cdot+\sum_{i=j_0}^{j-1}\sum_{n=2^i+1}^{2^{i+1}}\cdot+\sum_{n=2^j+1}^{N}\cdot,
$$
in order to get the following upper bound
  $$
  \left|\frac1N\sum_{n=1}^{N}u_n\right| \leq \left|\frac1N
    \sum_{n=1}^{2^{j_0}}u_n\right|+\frac1N\sum_{i=j_0}^{j-1}\left|\sum_{n=2^i+1}^{2^{i+1}}u_n\right|+
  \left|\frac1N\sum_{n=2^j+1}^{N}u_n\right|.
  $$
  Using (\ref{c2}), we get
  $$
  \frac1N\sum_{i=j_0}^{j-1}\left|\sum_{n=2^i+1}^{2^{i+1}}u_n\right|
  \leq\frac1N\sum_{i=j_0}^{j-1}3\, \varepsilon(i)\, 2^i\leq3\,
  \varepsilon'(j_0)\frac1N\sum_{i=j_0}^{j-1}2^i\leq3\varepsilon'(j_0).
  $$
  We also have
  $$
  \frac1N\sum_{n=2^j+1}^{N}u_n=\frac1N\sum_{n=1}^{N}u_{n,j}-
  \frac1N\sum_{n=1}^{2^j}u_{n,j},$$ which gives
  $$
  \left|\frac1N\sum_{n=2^j+1}^{N}u_n\right|\leq
  \left|\frac1N\sum_{n=1}^{N}u_{n,j}\right|+
  \left|\frac1{2^j}\sum_{n=1}^{2^j}u_{n,j}\right|\leq
  2\,\varepsilon'(j).
  $$
  Putting these estimates together we get
  $$
  \left|\frac1N\sum_{n=1}^{N} u_n \right| \leq \left|\frac1N
    \sum_{n=1}^{2^{j_0}}u_n\right|+ 5\varepsilon'(j_0).  $$
    By taking $N$ large enough we can make sure that
  the right hand side becomes less than $6\, \varepsilon$.  This
  completes the proof.
\end{proof}
%%In fact, for our purposes,  it is more convenient to use the following somewhat more intricate variation of the
%% previous lemma. It can be immediately deduced from the previous statement.
%%\begin{lemma}\label{L:Wierdl}
%%Let $(u^+_{n,j})_{n,j\in\N}$ and $(u^-_{n,j})_{n,j\in\N}$ be two families of complex numbers that  satisfy
%%$$
%%\lim_{j\to\infty} \sup_{N>2^{2j-1}}\left| \frac{1}{N} \sum_{n=1}^N u^+_{n,j}\right|=0; \quad
%%\lim_{j\to\infty} \sup_{N>2^{2j}} \left| \frac{1}{N} \sum_{n=1}^N u^-_{n,j}\right|=0.
%%$$
%%Define the sequence
%%$$
%%u_n=\begin{cases}u^+_{n,j}, \quad \text{if } \ 2^{2j-1}\leq n<2^{2j}; \\u^-_{n,j}, \quad \text{if }\
%% 2^{2j}\leq n<2^{2j+1}.\end{cases}
%%$$
%%Then
%%$$
%%\lim_{N\to\infty}\frac{1}{N}\sum_{n=1}^N u_n=0.
%%$$
%%\end{lemma}

 \subsection{Exponential sum estimates}
 We shall establish two exponential sum estimates.  The first gives non-trivial
 power type savings when one deals with exponential sums involving polynomials with leading
 coefficient an integer multiple of the golden mean. Let us recall the
 ``bad approximation property'' of the golden mean $\alpha$
 \begin{equation}\label{E:bad}
   \text{ For all non-zero integers $q$ we have $d(q\alpha,\Z)\geq 1/(3q)$}.
 \end{equation}

 \begin{lemma}\label{gsb}
   Let $\al$ be the golden mean and $b\in\N$. There exists $C=C(b)>0$
   such that for every $m,N\in\N$
   %%and real polynomial $P$ with
   %%degree $<b$,
   we have
   \begin{equation}\label{E:b}
    \sup_{P\in\R[X], \deg(P)<b}  \left|\sum_{n=1}^{N} e(m n^b\al+P(n))\right|\leq C\,m^{2^{1-b}} N^{1-4^{1-b}}.
   \end{equation}
 \end{lemma}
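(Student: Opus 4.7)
\textbf{Proof plan for Lemma~\ref{gsb}.} The plan is to induct on $b$, using one application of van der Corput's inequality per inductive step, with a suboptimally chosen window $H$ that is forced by the Ansatz.

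For the base case $b=1$, the polynomial $P$ is a constant, so $S$ is (up to a unimodular phase) a geometric sum bounded by $|1-e(m\alpha)|^{-1} \leq 1/(4\|m\alpha\|)$, where $\|\cdot\|$ is the distance to the nearest integer. The bad approximation property~\eqref{E:bad} of the golden mean gives $\|m\alpha\| \geq 1/(3m)$, hence $|S| \leq 3m/2$, which is of the claimed form $C\,m^{2^{0}}N^{1-4^{0}} = Cm$.

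For the inductive step, assume the estimate at $b-1$. Given $S = \sum_{n=1}^N e(mn^b\alpha + P(n))$, apply Corollary~\ref{C:VDC} (the scalar case of Lemma~\ref{L:VDC}) with an integer $H \in [1,N]$ to be chosen, obtaining
$$|S|^2 \leq \frac{2N^2}{H} + \frac{4N}{H}\sum_{h=1}^{H-1}|S_h|, \qquad S_h = \sum_{n=1}^{N-h} e\bigl(m\alpha((n+h)^b - n^b) + \Delta_h P(n)\bigr).$$
Since $\Delta_h(n^b) = bh\,n^{b-1} + \cdots$ and $\deg \Delta_h P < b-1$, the phase of $S_h$ is a polynomial of degree $b-1$ in $n$ with leading coefficient $mbh\alpha$ and a lower-order real polynomial part. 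The inductive hypothesis therefore gives $|S_h| \leq C_{b-1}(mbh)^{2^{2-b}} N^{1-4^{2-b}}$. Using the elementary estimate $\sum_{h=1}^{H-1} h^{2^{2-b}} \ll H^{2^{2-b}+1}$ and collecting constants produces
$$|S|^2 \leq \frac{2N^2}{H} + C'\, m^{2^{2-b}} N^{2-4^{2-b}} H^{2^{2-b}}.$$

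The crucial step is the choice $H = \lceil N^{2^{1-b}}\rceil$. With this value, the arithmetic identities $2^{1-b}\cdot 2^{2-b} = 2^{3-2b} = 2\cdot 4^{1-b}$ show that both summands on the right are bounded by a constant multiple of $m^{2^{2-b}} N^{2-2^{3-2b}} = m^{2\cdot 2^{1-b}} N^{2(1-4^{1-b})}$ (using $m \geq 1$ to absorb the $2N^2/H$ term into the second). Taking square roots yields the claimed bound $C_b\, m^{2^{1-b}} N^{1-4^{1-b}}$, closing the induction.

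The only real subtlety is this choice of $H$: the naive balancing of the two summands in the van der Corput bound would give a stronger inequality, but with exponents that do not reproduce the Ansatz in the next round. Accepting a suboptimal $H$ — one chosen precisely so that $m$'s exponent halves and $N$'s defect quarters at each step — is what delivers the clean statement. Note that the badness of $\alpha$ is invoked only in the base case; everything above it is symbolic bookkeeping of exponents.
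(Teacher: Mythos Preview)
Your proposal is correct and follows essentially the same route as the paper: induct on $b$, invoke the bad approximation property~\eqref{E:bad} only in the base case $b=1$ to bound the geometric sum, and at each inductive step apply one round of van der Corput (Lemma~\ref{L:VDC}) so that the differenced phase has leading coefficient $mbh\,\alpha$ of degree $b-1$, then feed this back into the inductive hypothesis. The only cosmetic difference is the choice of window: the paper (going $b\to b+1$) takes $H=[N^{2^{1-2b}}]+1$, whereas you (going $b-1\to b$) take $H=\lceil N^{2^{1-b}}\rceil$; both choices land the two van der Corput terms below $C\,m^{2\cdot 2^{1-b}}N^{2(1-4^{1-b})}$, so the induction closes either way.
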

 %% (The exponent $1-4^{1-b}$ of $N$ can be replaced by something like
%%$$
%%1-\prod_{j=2}^{b}\frac1{2+2^{3-j}},
%%$$
%%but we will not need this.)
 \begin{proof}
   We use an induction on $b$ . For $b=1$, we have
$$\left|\sum_{n=1}^{N} e(m n\al)\right|\leq\frac2{|e(m\al)-1|}=\frac1{|\sin(\pi m\al)|}\leq\frac1{2\text{d}(m\al,\Z)}\leq\frac{3m}2,$$
by the bad approximation property \eqref{E:bad}.

Suppose that the estimate \eqref{E:b} holds for the integer $b$. We
are going to show that it also holds for the integer $b+1$.  Let us
define
$$
S(m,N,b)=\sup_{P\in\R[X], \deg(P)<b}\left|\sum_{n=1}^{N} e(m
  n^b\al+P(n))\right|.
$$
From van der Corput's inequality (Lemma~\ref{L:VDC}), we deduce that
for every integer $H$ between 1 and $N$, we have
\begin{equation}\label{E:a1}
  S(m,N,b+1)^2\leq\frac{2N^2}{H}+\frac{4N}{H}\sum_{h=1}^{H-1}S(mh(b+1),N-h,b).
\end{equation}
The induction hypothesis gives that for some constant $C=C_b$ we have
$$
S(mh(b+1),N-h,b)\leq C\, (mh)^{2^{1-b}}(N-h)^{1-4^{1-b}},
$$
and so for $h$ between 1 and $H-1$ we have
\begin{equation}\label{E:a2}
  S(mh(b+1),N-h,b)\leq C\,m^{2^{1-b}}(H-1)^{2^{1-b}}N^{1-4^{1-b}}.
\end{equation}
Combining \eqref{E:a1} and \eqref{E:a2} we get
\begin{equation}\label{E:step}
  S(m,N,b+1)^2\leq 4\, C\,m^{2^{1-b}}\left(N^2H^{-1}+N^{2-4^{1-b}}(H-1)^{2^{1-b}}\right).
\end{equation}
Choosing $H=\left[N^{2^{1-2b}}\right]+1$ gives
$$
N^2H^{-1}\leq N^{2(1-4^{-b})}, \qquad N^{2-4^{1-b}}(H-1)^{2^{1-b}}\leq
N^{2(1-2\cdot 4^{-b}+2\cdot 8^{-b})}\leq N^{2(1-4^{-b})}.
$$
Using this together with \eqref{E:step} we find that
 $$
 S(m,N,b+1)^2\leq 4\, C\,m^{2^{1-b}} N^{2(1-4^{-b})}.
 $$
 Taking square roots establishes \eqref{E:b} for $b+1$ . This
 completes the induction and the proof.
\end{proof}
The second lemma gives non-trivial power type savings for exponential sums involving
polynomials
that have an integer multiple of the golden mean as a non-leading
(non-constant) coefficient. Its proof is a simplification of an
argument that appears in \cite{BKQW}.
\begin{lemma}\label{bsg}
  Let $\al$ be the golden mean and $g\in\N$. For every $\e>0$, there
  exists $C=C(\e,g)<+\infty$ such that, for every $\be\in\R$,
  $b\in\N$ with $b<g$,  $N\in\N$, and  nonzero $m\in\Z$ with
  $|m|\leq N^{2^{-g-2}}$, we have
  \begin{equation}\label{est}
    \left|\sum_{n=1}^{N} e(m n^b\al+n^g\be)\right|\leq C\,N^{1+\e-2^{-2g-1}}.
  \end{equation}
\end{lemma}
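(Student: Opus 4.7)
The plan is to run a Weyl-type dichotomy on the leading coefficient $\beta$ of the polynomial $P(n) = mn^b\alpha + n^g\beta$, invoking the classical Weyl inequality on the ``minor arc'' for $\beta$ and reducing the ``major arc'' case to Lemma~\ref{gsb}. First I fix a small $\kappa = \kappa(g,\varepsilon) > 0$ to be chosen later, and apply Dirichlet's approximation theorem with parameter $Q = N^{g-\kappa}$ to obtain coprime integers $a, q$ with $1 \leq q \leq Q$ and $|q\beta - a| \leq 1/Q$.

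If $q \geq N^\kappa$ (minor arc regime), the classical Weyl inequality applied to $P(n)$ using its leading coefficient $\beta$ already yields
$$\Bigl|\sum_{n=1}^N e(P(n))\Bigr| \ll_\varepsilon N^{1+\varepsilon - 2^{1-g}},$$
which is much stronger than needed since $2^{1-g} > 2^{-2g-1}$. So the substantial work lies in the major arc case $q < N^\kappa$. Here I would write $\beta = a/q + \gamma$ with $|\gamma| \leq 1/(qQ)$ and substitute $n = qr + s$ for $0 \leq s < q$. A binomial expansion shows that, viewed as a polynomial in $r$, one has $(qr+s)^g \cdot a/q \equiv as^g/q \pmod{\Z}$ (every $r^k$ with $k \geq 1$ has integer coefficient), so
$$\sum_{n=1}^N e(P(n)) = \sum_{s=0}^{q-1} e(as^g/q) \sum_{r=1}^{R_s} e(m(qr+s)^b \alpha) \cdot e((qr+s)^g \gamma),$$
with $R_s \asymp N/q$.

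The error phase $e((qr+s)^g \gamma)$ has derivative $O_g(N^{\kappa-1})$ in $r$ and thus varies by $O(1)$ over any block of length $L = \lfloor N^{1-\kappa}/g\rfloor$. Summation by parts then reduces each block's inner sum, after shifting $r = r_0 + r'$, to a Weyl sum of the form $\sum_{r'=1}^{L'} e(mq^b\alpha\, r'^b + Q_{s,r_0}(r'))$ with $\deg Q_{s,r_0} < b$; Lemma~\ref{gsb} bounds this by $C(b)(mq^b)^{2^{1-b}} L^{1-4^{1-b}}$ (the polynomial $Q_{s,r_0}$ is absorbed by the supremum in Lemma~\ref{gsb}). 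Summing over the $O_g(N^\kappa/q)$ blocks per $s$ and then over $s$, and substituting $|m| \leq N^{2^{-g-2}}$ and $q \leq N^\kappa$, the exponent of $N$ in the total bound becomes
$$1 - 4^{1-b} + 2^{-g-1-b} + O_g(\kappa).$$

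The algebraic core is the uniform inequality $4^{1-b} - 2^{-g-1-b} \geq 15 \cdot 2^{-2g}$ valid for all $1 \leq b \leq g - 1$ (setting $k = g - b \geq 1$ one checks $2^{2k+2} - 2^{k-1} \geq 15$), which leaves a gap of $14.5 \cdot 2^{-2g}$ above the required $2^{-2g-1}$. Choosing $\kappa$ small enough as a function of $g$ and $\varepsilon$ absorbs the $O_g(\kappa)$ loss into $\varepsilon$ and yields the desired bound $N^{1+\varepsilon - 2^{-2g-1}}$. I expect the main obstacle to be the major arc analysis: running summation by parts cleanly on the slowly varying phase $e((qr+s)^g \gamma)$, applying Lemma~\ref{gsb} block-by-block while absorbing all $s$- and $r_0$-dependent lower-order terms into its supremum, and verifying that the combined losses from $q$, $m$, and the block count are all beaten by the $L^{-4^{1-b}}$ savings. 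Once the correct $\kappa$ is fixed, the rest is essentially mechanical.
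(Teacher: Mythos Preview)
Your overall architecture (Dirichlet dichotomy on $\beta$, Weyl on the minor arc, residue splitting plus partial summation on the major arc) matches the paper's, but there is a genuine numerical gap. On the minor arc, Weyl's inequality with $N^\kappa \leq q \leq N^{g-\kappa}$ gives only
\[
\Bigl|\sum_{n=1}^N e(P(n))\Bigr| \ll_{\varepsilon,g} N^{1+\varepsilon}\bigl(q^{-1}+N^{-1}+qN^{-g}\bigr)^{2^{1-g}}\ll N^{1+\varepsilon-\kappa\cdot 2^{1-g}},
\]
not $N^{1+\varepsilon-2^{1-g}}$; to reach the target saving $2^{-2g-1}$ you are \emph{forced} to take $\kappa\geq 2^{-g-2}$ (the paper takes exactly $\gamma=2^{-g-2}$). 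You therefore cannot send $\kappa\to 0$ to absorb the $O_g(\kappa)$ loss on the major arc into $\varepsilon$.

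With $\kappa=2^{-g-2}$ fixed, your major-arc bookkeeping collapses for $b$ close to $g$: the saving $4^{1-b}$ supplied by Lemma~\ref{gsb} is only $16\cdot 2^{-2g}$ at $b=g-1$, while the partial-summation loss is of order $\kappa = 2^{-g-2}$, and $2^{-g-2}\gg 2^{-2g}$ for large $g$. Concretely, at $b=g-1$ your major-arc exponent is $1+2^{-g-2}-15\cdot 2^{-2g}$, which exceeds $1$ once $g\geq 6$, so no cancellation survives. The remedy, and this is exactly what the paper does, is to abandon Lemma~\ref{gsb} on the major arc and apply Weyl's inequality \emph{directly} to the inner sum $\sum_i e\bigl(m(si+j)^b\alpha\bigr)$: a second Dirichlet approximation to $ms^b\alpha$, combined with the bad-approximability of $\alpha$, pins the denominator $u$ into the range $[\tfrac13 N^{b-1/2-\gamma(b+1)}, N^{b-1/2}]$, and Weyl then yields a saving of order $2^{-b-1}\geq 2^{-g}$, which comfortably beats the $\kappa=2^{-g-2}$ loss from partial summation.
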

\begin{proof}
  The proof proceeds as follows: If $\beta$ is not well approximable
  by rationals in a way to be made precise below, then classical estimates of Weyl immediately give the
  advertised estimate.  If $\beta$ is well approximated by rationals,
  using partial summation we can replace $\beta$ by a rational (up to
  a small error), and reduce the problem to studying an exponential
  sum involving a polynomial that has an integer multiple of $\alpha$
  as leading coefficient. In this case, again the classical estimates
  of Weyl give the advertised result.

  So let us first recall Weyl's classical estimate (see e.g. \cite{Va}). For every $k\in\N$ and $\e>0$, there exists a constant $C$
  satisfying the following property: for every $N\in\N$, $\be\in\R$,   and relatively
  prime $r,s\in\N$ with $|\be-r/s|<1/s^2$, and for every real polynomial
  $P(x)$ with leading coefficient $\be x^k$, we have
 $$
 \left|\sum_{n=1}^{N} e(P(n))\right|\leq C
 N^{1+\e}\left(\frac1s+\frac1N+\frac{s}{N^k}\right)^{1/2^{k-1}}.
 $$

 We fix $\be\in\R$, $b\in\N$ with $b<g$, $N\in\N$, nonzero $m\in\Z$ with
 $|m|\leq N^{2^{-g-2}}$, and we define $\ga=2^{-g-2}$.

 By Dirichlet's principle, there exist $r,s\in\N$, relatively prime,
 such that $s\leq N^{g-\ga}$ and
$$
\left|\be-\frac{r}{s}\right|\leq\frac1{N^{g-\ga}s}.
$$
We distinguish two cases: either $s>N^\ga$ (bad approximation) or
$s\leq N^\ga$ (good approximation).

{\bf Case 1.} Suppose that $s>N^\ga$. By Weyl's estimate, we have
$$
\left|\sum_{n=1}^{N} e(mn^b\al+n^g\be)\right|\leq C(\e,g)
N^{1+\e}(N^{-\ga}+N^{-1}+N^{-\ga})^{2^{1-g}},
$$
which implies the estimate (\ref{est}), since $\ga=2^{-g-2}$.

{\bf Case 2.} Suppose now that $s\leq N^\ga$.  By Dirichlet's
principle, there exist $t,u\in\N$, relatively prime, such that $u\leq
N^{b-1/2}$ and
\begin{equation}\label{ouest}
  \left|ms^b\al-\frac{t}{u}\right|\leq\frac1{N^{b-1/2}u}.
\end{equation}
The bad approximation property of $\alpha$ mentioned in \eqref{E:bad}
gives that $mus^b\geq \frac13N^{b-1/2}$. Since $s\leq N^\ga$ and
$|m|\leq N^{\ga}$ we have $u\geq\frac13 N^{b-1/2-\ga b-\ga}$.

Consider now an integer $M$ between $N^{1-\ga}$ and $N$.  We are going
to compare the sums $\sum_{n\leq M} e(mn^b\al+n^g\be)$ with the sums
$\sum_{n\leq M} e(mn^b\al+\frac{r}{s}n^g )$ that are easier to
estimate. Let us first estimate the second sum. We have
$$
\sum_{n\leq M} e\left(mn^b\al+\frac{r}{s} n^g\right)=\sum_{j=1}^{s}\
\sum_{i\geq0,\,si+j\leq M}e\left(m(si+j)^b\al+\frac{r}{s} j^g\right),
$$
hence
\begin{equation}\label{sud}
  \left|\sum_{n\leq M} e\left(mn^b\al+\frac{r}{s} n^g\right)\right|\leq\sum_{j=1}^{s}\left|\sum_{i\geq0,\,si+j\leq M}e\left(m(si+j)^b\al\right)\right|.
\end{equation}
By Weyl's estimate and (\ref{ouest}), we have
$$
\left|\sum_{i\geq0,\,si+j\leq M}e\left(m(si+j)^b\al\right)\right|\leq
C(\e,b)
\left(\frac{M}{s}\right)^{1+\epsilon}\left(\frac1u+\frac{s}{M}+u\left(\frac{s}{M}\right)^b\right)^{2^{1-b}}.
$$
Using that
$$
N^{1-\ga}\leq M\leq N\ ,\quad1\leq s\leq N^\ga,
\quad\text{and}\quad\frac13N^{b-1/2-\ga b-\ga}\leq u\leq N^{b-1/2},
$$
we obtain
\begin{multline*}
  \left|\sum_{i\geq0,\,si+j\leq
      M}e\left(m(si+j)^b\al\right)\right|\leq \\C(\e,b)
  \frac{N^{1+\e}}s \left(N^{-b+1/2+\ga b+\ga}+N^{-1+2\ga}+N^{-1/2+2\ga
      b}\right)^{2^{1-b}}.
\end{multline*}
The term $N^{-1/2+2\ga b}$ is dominant, and is bounded by
$N^{-1/4}$. It follows that
$$
\left|\sum_{i\geq0,\,si+j\leq M}e\left(m(si+j)^b\al\right)\right|\leq
C(\e,b) \frac{N^{1+\e-2^{-b-1}}}s.
$$
Since the integer $g$ is fixed and $b<g$ we have
$$
\left|\sum_{i\geq0,\,si+j\leq M}e\left(m(si+j)^b\al\right)\right|\leq
C(\e,g) \frac{N^{1+\e-2^{-g-1}}}s.
$$
In conjunction with (\ref{sud}) this gives
\begin{equation}\label{nord}
  \left|\sum_{n\leq M} e\left(mn^b \al+\frac{r}{s} n^g\right)\right|\leq C(\e,g)
  N^{1+\e-2^{-g-1}}.
\end{equation}

We come back to our main goal of estimating the sums $\sum_{n\leq M}
e(mn^b\al+n^g\be)$.  We are going to use summation by parts. We set
$S(M)=\sum_{n\leq M} e\left(mn^b \al+\frac{r}{s} n^g\right)$ and
notice that
\begin{multline*}
  \left|\sum_{n\leq N} e(mn^b \al+n^g \be)\right|\leq
  N^{1-\ga}+\left|\sum_{N^{1-\ga}<n\leq N} e(mn^b \al+n^g \be)\right|
  \\ \leq N^{1-\ga}+\left|\sum_{N^{1-\ga}<n\leq N} (S(n)-S(n-1))\,
    e\left(n^g(\be-\frac{r}s)\right)\right|.
\end{multline*}
We have
$$
\left|e\left(n^g(\be-\frac{r}s)\right)-e\left((n+1)^g(\be-\frac{r}s)\right)\right|\leq
C n^{g-1}\left|\be-\frac{r}s\right|,
$$
where the constant $C$ does not depend on $\be$ because $\be-r/s$ is
uniformly bounded.  We know that for $n\leq N$ we have $
n^{g-1}\left|\be-r/s\right|\leq N^{-1+\ga}$. So using partial
summation, we obtain
\begin{multline*}
  \left|\sum_{n\leq N} e(mn^b \al+n^g \be)\right|\leq\\
  N^{1-\ga}+\left|S\left([N^{1-\ga}]+1\right)\right|+|S(N)|+C
  \sum_{N^{1-\ga}<n\leq N}|S(n)|N^{-1+\ga}.
\end{multline*}
Using (\ref{nord}) we conclude that
$$
\left|\sum_{n\leq N} e(mn^b \al+n^g \be)\right|\leq C(\e,g)
N^{1+\e-2^{-g-1}+\ga}.
$$
Recalling that $\ga=2^{-g-2}$, we derive an estimate stronger than
(\ref{est}). This completes the proof.
\end{proof}

\subsection{The PET induction argument.} We give the details needed to
complete the proof of Lemma~\ref{L:2'}.  The next result follows
immediately from Corollary~\ref{C:VDC}:
\begin{lemma}\label{L:PET1}
  Consider a family of  integer polynomials $\{p_1,\ldots,p_k\}$ and an
  integer polynomial $p$, all of them having zero constant term. Let
  $\{q_1,\ldots,q_{k'}\}$ be the family of distinct integer
  polynomials that is defined using the following operation: we start with
  the family of polynomials
  %% \begin{gather}\label{E:q's}
%% $$
%% p_1(n+h)-p_1(h)-p(n),\ldots,p_k(n+h)-p_k(h)-p(n),
%% p_1(n)-p(n),\ldots,p_k(n)-p(n),
%% $$
%%{\bf Not correct should be }
 $$
 p_1(n+h)-p_1(h)-p(n),\ldots,p_k(n+h)-p_k(h)-p(n),
 p_1(n)-p(n),\ldots,p_k(n)-p(n),
 $$
 %% \end{gather}
 and we remove polynomials that are identically zero and repetitions
 of polynomials.
 %% Let $p_1,\ldots,p_k$ be a family of integer polynomials with zero
 %% constant term and $q_1,\ldots,q_k'$
 Then for every system $(X,\mathcal{B},\mu,T)$, and sequence of
 complex numbers $(u_n)$ with $\norm{u_n}{\infty}{} \leq 1$, we have
 \begin{multline*}
   \sup_{f_1,\ldots,f_k} \left\|\frac1N\sum_{n=1}^N u_n
     \,T^{p_1(n)}f_1\cdot\ldots\cdot T^{p_k(n)}f_k\right\|_2^2\leq\\
   \frac4H\sum_{h=1}^H\sup_{f_1,\ldots,f_{k'}} \left\|\frac1N\sum_{n=1}^N
     u_{n+h} \bar{u}_n \, T^{q_1(n)}f_1\cdot\ldots\cdot
     T^{q_{k'}(n)}f_{k'}\right\|_2 +o_{N,H,H\prec N}(1),
 \end{multline*}
 where the supremums are taken over families of functions in $L^\infty(\mu)$
 bounded by $1$.
\end{lemma}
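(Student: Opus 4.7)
My plan is to apply Corollary~\ref{C:VDC} to the vectors $v_n := u_n \cdot T^{p_1(n)}f_1 \cdots T^{p_k(n)}f_k$ for fixed $f_1,\ldots,f_k$ with $\|f_i\|_\infty \leq 1$. These vectors lie in the unit ball of $L^2(\mu)$, so the corollary yields
\begin{equation*}
\Bigl\|\frac{1}{N}\sum_{n=1}^N v_n\Bigr\|_2^2 \leq \frac{4}{H}\sum_{h=1}^H \Bigl|\frac{1}{N}\sum_{n=1}^N \langle v_{n+h}, v_n\rangle\Bigr| + o_{N,H,H\prec N}(1).
\end{equation*}
Everything that follows is just a rewriting of each inner product $\langle v_{n+h}, v_n\rangle$ so that it fits the shape appearing on the right-hand side of the lemma, followed by taking the supremum over $f_1,\ldots,f_k$.

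To carry out this rewriting I would first use that $T^{-p(n)}$ is unitary on $L^2(\mu)$ to write $\langle v_{n+h}, v_n\rangle = \langle T^{-p(n)}v_{n+h}, T^{-p(n)}v_n\rangle$. Expanding the right-hand side gives an integral of a product of $2k$ shifted functions whose exponents are $p_i(n+h)-p(n)$ (coming from $v_{n+h}$) and $p_i(n)-p(n)$ (coming from $\overline{v_n}$). The second group is already in the list defining the $q_j$'s. For the first group, I split $p_i(n+h)-p(n) = \bigl(p_i(n+h)-p_i(h)-p(n)\bigr) + p_i(h)$ and absorb the $n$-independent shift $T^{p_i(h)}$ into a new function $\tilde f_i := T^{p_i(h)} f_i$, which again has $\|\tilde f_i\|_\infty \leq 1$. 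The polynomial exponents that remain are then exactly those in the list defining the $q_j$'s.

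The next step is to collapse this list of $2k$ polynomials down to the genuinely distinct nonzero ones $q_1,\ldots,q_{k'}$: factors whose exponents coincide get merged by pointwise multiplication (the $L^\infty \leq 1$ bound is preserved), and factors with identically zero exponent are collected into a single $n$-independent function $F$ with $\|F\|_\infty \leq 1$. This produces
\begin{equation*}
\langle v_{n+h}, v_n\rangle = u_{n+h}\bar u_n \int F \cdot \prod_{j=1}^{k'} T^{q_j(n)} F_{j,h}\, d\mu
\end{equation*}
for appropriate $F, F_{j,h}$ in the $L^\infty$ unit ball depending on $h$ and on the $f_i$'s. Averaging in $n$, swapping the sum with the integral (legal because $F$ does not depend on $n$), and using the trivial bound $|\int F\cdot G\, d\mu| \leq \|G\|_2$, gives
\begin{equation*}
\Bigl|\frac{1}{N}\sum_{n=1}^N \langle v_{n+h}, v_n\rangle\Bigr| \leq \Bigl\|\frac{1}{N}\sum_{n=1}^N u_{n+h}\bar u_n \prod_{j=1}^{k'} T^{q_j(n)} F_{j,h}\Bigr\|_2,
\end{equation*}
and the right-hand side is in turn dominated by the supremum over $L^\infty$-unit functions $g_1,\ldots,g_{k'}$ that appears in the statement. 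Plugging this back into the van der Corput estimate and taking the supremum over $f_1,\ldots,f_k$ on the left finishes the proof. The only real subtlety is the bookkeeping in this merging step: I have to simultaneously track the $T^{p_i(h)}$ absorption, the coincidences among the $2k$ polynomial exponents, and the identically-zero ones, and verify throughout that the resulting functions stay in the $L^\infty$ unit ball. Beyond that, nothing is used except van der Corput's inequality and the measure-invariance of $T$.
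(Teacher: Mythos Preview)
Your proposal is correct and is exactly the argument the paper has in mind: the paper's entire proof is the sentence ``The next result follows immediately from Corollary~\ref{C:VDC},'' and what you have written is precisely the standard expansion of that sentence---apply the Hilbert-space van der Corput bound to $v_n=u_n\,T^{p_1(n)}f_1\cdots T^{p_k(n)}f_k$, use the $T$-invariance of $\mu$ to shift all exponents by $-p(n)$, absorb the $n$-free shift $T^{p_i(h)}$ into the functions, and collapse repetitions and zero exponents. There is nothing to add.
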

Let $\mathcal{P}$ be a family of non-constant integer polynomials with
zero constant term. The maximum degree of the polynomials is called
the \emph{degree} of the polynomial family and we denote it by $d$.
Let $\mathcal{P}_i$ be the subfamily of polynomials of degree $i$ in
$\mathcal{P}$. We let $w_i$ denote the number of distinct leading
coefficients that appear in the family $\mathcal{P}_i$. The vector
$(d,w_d,\ldots,w_1)$ is called the \emph{type} of the polynomial
family $\mathcal{P}$.  We use an induction scheme, often called
PET induction (Polynomial Exhaustion Technique), on types of
polynomial families that was introduced by Bergelson in \cite{Be1}.
We order the set of all possible types lexicographically, this means,
$(d,w_d,\ldots,w_1)>(d',w_{d'}',\ldots,w_1')$ if and only if in the
first instance where the two vectors disagree the coordinate of the
first vector is greater than the coordinate of the second vector.
\begin{proposition}[{\bf Bergelson's PET \cite{Be1}}]\label{P:PET2}
  Let $\{p_1,\ldots,p_k\}$ be a family of non-constant integer
  polynomials with zero constant term.  After applying finitely many
  times the operation defined in Lemma~\ref{L:PET1} (for good choices
  of the auxiliary polynomial $p$ at each step) it is possible to
  obtain the empty family of polynomials.
\end{proposition}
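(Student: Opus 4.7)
The plan is to execute Bergelson's polynomial exhaustion technique: I will describe a rule for selecting the auxiliary polynomial $p$ at each application of the operation so that the \emph{type} $(d, w_d, \ldots, w_1)$ of the resulting family is strictly smaller than that of the preceding family in the lexicographic order (with priority on the maximum degree $d$). Since lex on finite tuples of non-negative integers is a well-order --- the empty family corresponding to the minimum --- any such strictly decreasing chain must terminate in finitely many steps, which is exactly the conclusion. One preliminary check to make is that the operation preserves the class of polynomials with zero constant term: evaluating $p_j(n+h) - p_j(h) - p(n)$ at $n=0$ gives $-p(0)=0$ and $p_j - p$ clearly has zero constant term, so everything stays in the regime where the type is well-defined.

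At each step, let $d_0$ denote the smallest degree appearing in the current family, and choose $p$ to be any polynomial in that family of degree $d_0$, with leading coefficient $a$. The first observation is that any $p_j$ of degree $i > d_0$ contributes polynomials $p_j(n+h) - p_j(h) - p(n)$ and $p_j(n) - p(n)$ of degree $i$ whose leading coefficient equals that of $p_j$, since the subtracted $p(n)$ has degree $d_0 < i$. Consequently the set of leading coefficients appearing at each degree $i > d_0$ is unchanged, i.e.\ $w_i' = w_i$. At degree $d_0$ itself, writing $a_j$ for the leading coefficient of $p_j$ (when $\deg p_j = d_0$), the polynomial $p_j(n+h) - p_j(h) - p(n)$ has leading coefficient $a_j - a$, which vanishes precisely when $a_j = a$, while $p_j(n) - p(n)$ is identically zero when $p_j = p$ and otherwise has leading coefficient $a_j - a$ (of lower degree when $a_j = a$ but $p_j \neq p$). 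Hence the distinct degree-$d_0$ leading coefficients in the new family form the set $\{a_j - a : a_j \neq a\}$, of cardinality exactly $w_{d_0} - 1$.

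Comparing the old and new types lexicographically, they agree at every coordinate corresponding to a degree above $d_0$, while the new $w_{d_0}$-coordinate is strictly smaller; in the sub-case $d_0 = d$ and $w_d = 1$ the top degree itself drops, which is still a strict lex decrease. Polynomials of degree below $d_0$ may newly appear (for instance from $p_j - p$ when $p_j \neq p$ but $a_j = a$), but these do not affect the lex comparison, which is already decided at position $d_0$. The only real bookkeeping obstacle is to keep careful track of which leading coefficients appear, survive, or cancel at each degree under the operation; once that is handled, well-foundedness of the lex order on finite non-negative integer sequences immediately concludes the argument.
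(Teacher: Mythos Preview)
Your proof is correct and follows exactly the paper's approach: choose $p$ to be a polynomial of minimal degree in the current family, verify that the type strictly decreases in the lexicographic order, and conclude by well-foundedness. The paper asserts the type drop in a single sentence without the coefficient-by-coefficient verification you supply, so your write-up is simply a more detailed execution of the same argument.
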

\begin{proof}
  Let $p_{min}$ be any member of the family $\{p_1,\ldots,p_k\}$ that
  has minimal degree. Notice that after applying the operation defined
  in Lemma~\ref{L:PET1} for $p=p_{min}$, we obtain a new family of
  polynomials that has type strictly less than the type of the family
  $\{p_1,\ldots,p_k\}$.  The result now follows using induction on the
  type of the polynomial family.
\end{proof}
The following two examples illustrate how a typical PET induction
argument works:
\begin{example}\label{Ex1}
  Suppose that we start with the family of polynomials
  $\mathcal{P}_0=\{n,2n\}$ that has type $(1,2)$. Applying the
  operation defined in Lemma~\ref{L:PET1} with $p(n)=n$ we obtain the
  family $\mathcal{P}_1=\{n\}$ that has type $(1,1)$. After one more
  application of the operation we obtain an empty family of
  polynomials.
\end{example}
\begin{example}
  Suppose that we start with the family of polynomials
  $\mathcal{P}_0=\{n^2,2n^2\}$ that has type $(2,2,0)$. Applying
  successively the operation defined in Lemma~\ref{L:PET1} we obtain
  the following families of polynomials: Using $p(n)=n^2$ we get the
  family
$$
\mathcal{P}_1=\{2 nh_1, n^2+4nh_1,n^2\}
$$
that has type $(2,1,1)$. Using $p(n)=2nh_1$ we get the family
$$
\mathcal{P}_2=\{n^2+2n(h_1+h_2),n^2+2n(h_2-h_1),n^2+2nh_1,n^2-2nh_1\}
$$
that has type $(2,1,0)$. Using $p(n)=n^2$ we get the family
\begin{align*}
\mathcal{P}_3=\{2n(h_1+h_2+h_3)&,2n(h_3+h_2-h_1),2n(h_1+h_3),\\
&2n(h_3-h_1),2n(h_1+h_2),
2n(h_2-h_1),2nh_1,-2nh_1\}
\end{align*}
that has type at most $(1,8)$ (actually equal to $(1,8)$ for most
values of $h_1,h_2,h_3$).  The last family consists of linear
polynomials and can be dealt as in Example~\ref{Ex1}. After $8$ more
operations we arrive to an empty family of polynomials.
\end{example}

\end{document}